\newtheorem{theorem}{Theorem}[section]
\newtheorem{lemma}[theorem]{Lemma}
\theoremstyle{definition}
\newtheorem{remark}{{\it Remark}}[section]
\numberwithin{equation}{section}
\newcommand{\lb}{\llbracket}
\newcommand{\rb}{\rrbracket}
\newcommand{\Lb}{\{\hspace{-4.0pt}\{}
\newcommand{\Rb}{\}\hspace{-4.0pt}\}}
\newcommand{\Bb}{{\boldsymbol{b}}}
\newcommand{\Bc}{{\boldsymbol{c}}}
\newcommand{\Bd}{{\boldsymbol{d}}}
\newcommand{\Bf}{{\boldsymbol{f}}}
\newcommand{\Bg}{{\boldsymbol{g}}}
\newcommand{\Bh}{{\boldsymbol{h}}}
\newcommand{\Bn}{{\boldsymbol{n}}}
\newcommand{\Bu}{{\boldsymbol{u}}}
\newcommand{\Bv}{{\boldsymbol{v}}}
\newcommand{\Bw}{{\boldsymbol{w}}}
\newcommand{\Bx}{{\boldsymbol{x}}}
\newcommand{\Bz}{{\boldsymbol{z}}}
\newcommand{\Bmu}{\boldsymbol{\mu}}
\newcommand{\BC}{{\boldsymbol{C}}}
\newcommand{\BK}{{\boldsymbol{K}}}
\newcommand{\BL}{{\boldsymbol{L}}}
\newcommand{\BM}{{\boldsymbol{M}}}
\newcommand{\BV}{{\boldsymbol{V}}}
\newcommand{\BW}{{\boldsymbol{W}}}
\newcommand{\BZ}{{\boldsymbol{Z}}}
\newcommand{\Bbeta}{{\boldsymbol{\beta}}}
\newcommand{\Ce}{{\mathcal E}}
\newcommand{\Cf}{{\mathcal F}}
\newcommand{\Cp}{{\mathcal P}}
\newcommand{\Ct}{{\mathcal T}}
\newcommand{\uhat}{\widehat{\boldsymbol{u}}_h}
\newcommand{\bhat}{{\widehat{\boldsymbol{b}}^t_h}}
\newcommand{\rhat}{{\widehat{{r}}_h}}
\newcommand{\vhat}{\widehat{\boldsymbol{v}}}
\newcommand{\chat}{{\widehat{\boldsymbol{c}}^t}}
\newcommand{\shat}{{\widehat{{s}}}}
\newcommand{\RmL}{{\mathrm{L}}}
\newcommand{\RmI}{{\mathrm{I}}}
\newcommand{\RmG}{{\mathrm{G}}}
\newcommand{\bint}[2]{( #1\,,\,#2 )_{\Ct_h}}
\newcommand{\bintEh}[2]{\langle #1\,,\,#2 \rangle_{\partial{\Ct_h}}}
\definecolor{red}{rgb}{0,0,0}
\newcommand\red[1]{\textcolor{red}{#1}}
\definecolor{blue}{rgb}{0,0,0}
\newcommand\blue[1]{\textcolor{blue}{#1}}
\begin{document}

\title[DG for incompressible MHD]{A Mixed DG method 
and an HDG method for incompressible magnetohydrodynamics}

\author{Weifeng Qiu}
\address{Department of Mathematics, City University of Hong Kong, 83 Tat Chee Avenue, Kowloon, Hong Kong, China}
\email{weifeqiu@cityu.edu.hk}
\thanks{
Weifeng Qiu is partially supported by a grant from the Research Grants 
Council of the Hong Kong Special Administrative Region, China 
(Project No. CityU 11302014). 
}

\author{Ke Shi}
\address{Department of Mathematics $\&$ Statistics, Old Dominion University, Norfolk, VA 23529, USA}
\email{kshi@odu.edu}
\thanks{
Ke Shi is partially supported by SRFP grant from the Research Foundation, Old Dominion University. 
As a convention the names of the authors are alphabetically ordered. 
Both authors contributed equally in this article. 
}


\begin{abstract}
In this paper we propose and analyze a mixed DG method and an HDG method for the stationary Magnetohydrodynamics (MHD) 
equations with two types of boundary (or constraint) conditions. The mixed DG method is {{based on}} a recent work proposed by 
Houston et. al. {{in \cite{HSW09}}} for the linearized MHD. With two novel discrete Sobolev embedding type estimates for the discontinuous 
polynomials, we provide a priori error estimates for the method on the nonlinear MHD equations. In the smooth case, 
we have optimal convergence rate for the velocity, magnetic field and pressure in the energy norm, the Lagrange multiplier 
only has suboptimal convergence order. With the minimal regularity assumption on the exact solution, the approximation is 
optimal for all unknowns. To the best of our knowledge, this is the first a priori error estimates of DG methods for 
{{the}} nonlinear MHD equations. In addition, we also propose and analyze the first divergence-free HDG method for the problem with several unique features comparing with the mixed DG method.
\end{abstract}

\subjclass[2000]{65N30, 65L12}

\keywords{discontinuous Galerkin, Magnetohydrodynamics, local conservation, Hybridization}

\maketitle

 \section{Introduction}
MHD describes the interaction of electrically conducting fluids and electromagnetic fields  \cite{Davidson01, GerbeauBris2006, Muller01}. Examples of such magneto-fluids include plasmas, liquid metals, and salt water or electrolytes. We refer to \cite{GerbeauBris2006, Moreau, HughesYoung99} for a more comprehensive discussion on the applications of the MHD system. 
The physical model is based on two principles: first, the motion of a conducting fluid in the presence of a magnetic field induces an electric current which also interacts with the existing electromagnetic field. Second, the Lorentz force generated by the current and the magnetic field also affects the motion of the fluid. The governing equations of the stationary incompressible MHD system can be written as: 
\begin{subequations}
\label{mhd_eqs}
\begin{align}
\label{mhd_eq1}
-\nu \Delta\Bu + (\Bu\cdot \nabla) \Bu + \nabla p - \kappa (\nabla\times \Bb) \times \Bb & = \Bf \quad \text{ in } \Omega, \\
\label{mhd_eq2} 
\kappa \nu_{m} \nabla\times (\nabla\times \Bb) + \nabla r - \kappa \nabla \times (\Bu \times \Bb) & = \Bg \quad \text{ in } \Omega, \\
\label{mhd_eq3}
\nabla \cdot \Bu & = 0 \quad \text{ in } \Omega, \\
\label{mhd_eq4}
\nabla\cdot \Bb & = 0 \quad \text{ in } \Omega, \\
\label{mhd_eq5}
\Bu & = \boldsymbol{0} \quad \text{ on } \partial\Omega, \\
\label{mhd_eq8}
 \int_{\Omega} p \, d\Bx & = 0.
\end{align}
\end{subequations}
The domain $\Omega$ is a simply connected, bounded Lipschitz polyhedron in $\mathbb{R}^{3}$.  
We denote by $\Bn$ the unit outward normal vector on $\partial\Omega$. The unknowns are the velocity $\Bu$, the pressure $p$, magnetic field $\Bb$ and the Lagrange multiplier $r$ associated with the divergence constraint on the magnetic field $\Bb$. The functions $\Bf, \Bg$ are external force terms. 
These equations are characterized by three dimensionless parameters: the hydrodynamic Reynolds number $\text{Re} = \nu^{-1}$, the magnetic Reynolds number $\text{Rm} = \nu_{m}^{-1}$ and the coupling number $\kappa$. We refer to \cite{ArmeroSimo96, Davidson01, GerbeauBris2006} for further discussion of these parameters 
and their typical values. We consider two types of boundary (or constraint) conditions for the magnetic field $\Bb$ 
and the Lagrange multiplier $r$. The first type is 
\begin{subequations}
\label{conds_type1}
\begin{align}
\label{mhd_eq6}
\Bn \times \Bb & = \boldsymbol{0} \quad \text{ on } \partial\Omega, \\ 
\label{mhd_eq7}
r & = 0 \quad \text{ on } \partial\Omega. 
\end{align}
\end{subequations}
The second type is 
\begin{subequations}
\label{conds_type2}
\begin{align}
\label{mhd_eq9}
\Bb \cdot \Bn & = 0 \quad \text{ on } \partial\Omega, \\ 
\label{mhd_eq10}
\Bn \times (\nabla\times \Bb)  & = \boldsymbol{0} \quad \text{ on } \partial\Omega,\\ 
\label{mhd_eq11}
\int_{\Omega} r d\Bx & = 0.
\end{align}
\end{subequations}

Due to its significant role in applications, there have been many studies on the MHD equations see \cite{CaoWu10,Chae08, ChenMiaoZhang08, Badia2013, Banas2010, DongHeZhang, SalahSoulHab01,GerbeauBris2006,GreifLi2010,Gunzburger91,Prohl2008, MeirSchmidt99, Shoetzau2004, ZhangHeYang} and the references therein. Designing and analyzing numerical methods to solve 
this system is in general a challenging task due to multiple vector and scalar unknowns, to the various differential 
operators involved, and to the nonlinearities of the PDEs. To the best of the our knowledge, all existing finite element 
methods mentioned above for (\ref{mhd_eqs}) use conforming elements to approximate the magnetic field $\boldsymbol{b}$. 
As a consequence, local conservation does not hold for (\ref{mhd_eq4}). For the detailed explanation of 
importance of local conservation, we refer to \cite{DSW04}.

In this paper, we propose and analyze a mixed DG method for the stationary incompressible MHD 
with two types of boundary (or constraint) conditions (\ref{conds_type1}) and (\ref{conds_type2}),  
which provides optimal convergent approximation to the velocity, pressure and magnetic field. 
Due to the nature of DG methods, the local conservation for both velocity and magnetic field is preserved. 
Our method is based on the IP-DG scheme proposed in \cite{HSW09} for the linearized MHD equation. The bottle-neck 
to extend the analysis in \cite{HSW09} to nonlinear MHD system comes from the nonlinear coupling term between 
the fluid and magnetic field. Namely, a key ingredient in the analysis is a Sobolev embedding like estimate for 
the $L^3$-norm of the magnetic field. 
In Section 5 \& 6, we develop the estimates for the discrete nonconforming magnetic field with the first type of boundary 
(or constraint) conditions (\ref{conds_type1}) and the second type of boundary (or constraint) conditions (\ref{conds_type2}), 
respectively. In fact, these $L^3$-norm estimates of the discrete magnetic field help to show that our DG method has optimal 
approximation to all unknowns with the minimal regularity assumption on the exact solution. \cite{Shoetzau2004} is the first 
paper which tried to obtain optimal approximation under the minimal regularity assumption on the exact solution. However, 
according to \cite[Section~$1$]{ZhangHeYang}, the author of \cite{Shoetzau2004} failed to prove 
\cite[Proposition $3.2$]{Shoetzau2004} and \cite[Corollary $3.1$]{Shoetzau2004}, which are indispensable 
to give an error estimate. In \cite{ZhangHeYang}, a correct error analysis is given to show that 
with the first type of boundary (or constraint) conditions (\ref{conds_type1}), 
optimal convergence is achieved by the conforming method in \cite{Shoetzau2004} 
under the minimal regularity assumption. 
Up to our knowledge, with the second type of boundary (or constraint) conditions (\ref{conds_type2}), our DG method is 
the first numerical method shown to achieve optimal convergence of all unknowns under the minimal regularity assumption. 

Comparing with conforming mixed methods, DG approach has several attractive features such as local conservation, high order accuracy, {\em hp-}adaptivity, easy implementation. Nevertheless DG methods are also critized with much more degrees of freedoms comparing with conforming methods. This disadvantage becomes more severe in problems involving multiple vector and scalar unkowns such as MHD. In order to make the DG approach more competitive, in Section 9 we propose a new hybridizable DG method (HDG) for the MHD problem. Thanks to the nature of the HDG framework, the scheme can be hybridized so that the global degree of freedoms can be reduced significantly and is more efficient than existing mixed methods when high order polynomial spaces are employed. In addition, the proposed HDG method provides exactly divergence-free velocity field while maintaining all existing features of HDG framework. As a consequence, the errors of the velocity and magnetic fields are independent of the pressure. Violation of divergence-free constraint can cause large errors in practice even for stable elements, we refer a review paper \cite{JLMNR2016} for more discussions on this issue. 

The rest of the paper is organized as follows: in Section 2 we present the mixedDG scheme for the MHD system and introduce 
the notations and definitions; in Section 3 we present our main results; Section 4 provides several auxiliary results 
which needed for the proofs; Section 5-8 are the detail proofs for the main results; In Section, we propose and analyze a new HDG method for the MHD system; concluding remarks are in Section 10.

\section{Mixed Discontinuous Galerkin method}
To define the DG method for the problem, we adopt notations and norms in \cite{HSW09}. 
We consider a family of conforming triangulations $\Ct_{h}$ made of shape-regular tetrahedra. 
We denote by $\Cf_{h}^{I}$ the set of all interior faces of $\Ct_{h}$, and by $\Cf_{h}^{B}$ the set of 
all boundary faces. We define $\Cf_{h}:= \Cf_{h}^{I} \cup \Cf_{h}^{B}$. $h_{K}$ denotes the diameter of 
the element $K$, and $h_{F}$ is the diameter of the face $F$. The mesh size of $\Ct_{h}$ is defined as 
$h:= \max_{K\in \Ct_{h}}h_{K}$. We denote by $\Bn_{K}$ the unit outward normal vector on $\partial K$.  
We also introduce the average and jump operators. Let $F = \partial K\cap \partial K^{\prime}$ be 
an interior face shared by $K$ and $K^{\prime}$. Let $\phi$ be a generic piecewise smooth function 
(scalar-, vector- or tensor-valued). We define the average of $\phi$ on $F$ as 
$\Lb \phi \Rb := \frac{1}{2}(\phi + \phi^{\prime})$ where $\phi$ and $\phi^{\prime}$ denote the trace of 
$\phi$ from the interior of $K$ and $K^{\prime}$. Furthermore, let $u$ be a piecewise smooth function 
and $\Bu$ a piecewise smooth vector-valued field. Analogously, we define the following jumps on $F$:
\begin{align*}
& \lb u \rb  := u\Bn_{K} + u^{\prime} \Bn_{K^{\prime}},  
&\lb \Bu \rb := \Bu\otimes \Bn_{K}+ \Bu^{\prime}\otimes \Bn_{K^{\prime}},\\
&\lb \Bu \rb_{T} := \Bn_{K}\times \Bu + \Bn_{K^{\prime}}\times \Bu^{\prime},
& \lb \Bu \rb_{N} := \Bu\cdot \Bn_{K} + \Bu^{\prime}\cdot \Bn_{K^{\prime}}.
\end{align*}
On a boundary face $F= \partial K \cap \partial\Omega$, we set accordingly $\Lb \phi \Rb := \phi$, 
$\lb u\rb := u\Bn$, $\lb \Bu \rb := \Bu\otimes \Bn$, $\lb \Bu \rb_{T} := \Bn \times \Bu$ 
and $\lb \Bu\rb_{N} := \Bu\cdot \Bn$. 

Throughout this paper, we assume the integer $k\geq 1$. Here $P_{k}(\Ct_{h};\mathbb{R}^{3})$ denotes the space contains 
vector-valued piecewise polynomials of degree no more than $k$ on $\mathcal{T}_h$. Similarly, $P_{k}(\mathcal{T}_h)$ 
denotes the space contains piecewise polynomials of degree no more than $k$ on $\mathcal{T}_h$. In addition, standard 
inner product notations are used throughout the paper. Namely, $(f, g)_D = \int_D f g d \Bx$ for $D \in \mathbb{R}^3$ 
and $ \langle f, g \rangle_D := \int_D fg ds$ for $D \in \mathbb{R}^2$. We use the standard notations for Sobolev norms. 
In addition, we use the following notations and spaces:
\begin{align*}
H(\text{div}; \Omega) &:= \{\Bv \in L^2(\Omega; \mathbb{R}^{3}), \nabla\cdot \Bv \in L^2(\Omega)\}; \\
H(\text{div}^0; \Omega) &:= \{\Bv \in H(\text{div}, \nabla\cdot \Bv = 0\}; \\
H(\text{curl}; \Omega) &:= \{\Bc \in L^2(\Omega; \mathbb{R}^{3}), \nabla \times \Bc \in L^2(\Omega; \mathbb{R}^{3})\};\\
H(\text{curl}^0; \Omega) &:= \{\Bv \in H(\text{curl}, \nabla \times \Bv = 0\}; \\
H_0(\text{curl}; \Omega) &:= \{\Bc \in H(\text{curl}; \Omega), \Bn \times \Bc = 0 \; \text{on} \; \partial \Omega\};\\
\|\Bv\|_{L^2(\Ct_h)}&:= \big( \Sigma_{K \in \Ct_h} \|\Bv\|_{K}^{2}\big)^{\frac{1}{2}}, 
\quad \|v\|_{L^2(\Cf_h)}:= \big( \Sigma_{F \in \Cf_h} \|v\|_{L^2(F)}^{2} \big)^{\frac{1}{2}}.
\end{align*}

In order to define mixed discontinuous Galerkin method, we introduce the finite dimensional spaces:
\begin{align*}
\BV_{h} := P_{k}(\Ct_{h};\mathbb{R}^{3}),
\quad Q_{h} := P_{k-1}(\Ct_{h}) \cap L_{0}^{2}(\Omega), 
\quad \BC_{h} := P_{k}(\Ct_{h};\mathbb{R}^{3}),
\quad S_{h} := P_{k+1}(\Ct_{h}). 
\end{align*}
$\BV_{h}$ is for the approximation to the velocity field $\Bu$. $Q_{h}$ is for the approximation 
to the pressure $p$. $\BC_{h}$ is for the approximation to the magnetic field $\Bb$. 
$S_{h}$ is for the approximation to the Lagrange multiplier $r$ with the first type of boundary (or constraint) 
conditions (\ref{conds_type1}). $S_{h}\cap L_{0}^{2}(\Omega)$ is for the approximation to the Lagrange 
multiplier $r$ with the second type of boundary (constraint) conditions (\ref{conds_type2}).

\subsection{Mixed Discontinuous Galerkin method for the first type of boundary (or constraint) conditions}
The mixed DG method for the first type of boundary (or constraint) conditions (\ref{conds_type1}) for 
the magnetic field and the Lagrange multiplier seeks an approximation $(\Bu_{h}, \Bb_{h}, p_{h}, r_{h})\in \BV_{h}\times 
\BC_{h}\times Q_{h}\times S_{h}$ to the exact solution $(\Bu,\Bb, p, r)$ of (\ref{mhd_eqs}) with (\ref{conds_type1}). 
The method determines the approximate solution by requiring that it solves the following weak formulation:
\begin{subequations}
\label{DG_mhd}
\begin{align}
\label{DG_mhd_eq1}
A_{h}(\Bu_{h}, \Bv) + O_{h}(\Bbeta; \Bu_{h}, \Bv) + C_{h}(\Bd; \Bv, \Bb_{h}) + B_{h}(\Bv, p_{h}) & = (\Bf, \Bv)_{\Omega}, \\
\label{DG_mhd_eq2} 
M_{h}(\Bb_{h}, \Bc) - C_{h}(\Bd; \Bu_{h}, \Bc) + D_{h} (\Bc, r_{h}) & = (\Bg, \Bc)_{\Omega}, \\
\label{DG_mhd_eq3}
B_{h} (\Bu_{h}, q) &= 0,\\
\label{DG_mhd_eq4} 
D_{h}(\Bb_{h}, s) - J_{h}(r_{h}, s) &= 0,
\end{align}
\end{subequations}
for all $(\Bv, \Bc, q, s)\in \BV_{h}\times \BC_{h}\times Q_{h}\times S_{h}$. We put 
\begin{align}
\label{DG_input_data}
\Bbeta = \mathbb{P}(\Bu_{h}, \Lb \Bu_{h} \Rb),\qquad \Bd = \Bb_{h}. 
\end{align}
The postprocessing operator $\mathbb{P}$ from $H^1(\Ct_h;\mathbb{R}^{3})\times L^2(\Cf_h;\mathbb{R}^{3})$ into
\[
\{\Bv\in H(\text{div};\Omega):\Bv|_{K}\in
RT_{k}(K):=P_{k}(K;\mathbb{R}^{3})+\Bx P_{k}(K)\}
\]
is defined on the element $K$ by the following equations, see \cite{Cesmelioglu2016}:
\begin{subequations}
\label{post_process_op}
\begin{alignat}{2}
\label{post_process_op_eq1}
 (\mathbb{P}(\Bu_{h}, \Lb \Bu_{h} \Rb ) - \Bu_{h}, \Bv)_{K}  &=0&&\quad 
\forall \Bv\in P_{k-1}(K;\mathbb{R}^{3}),\\
\label{post_process_op_eq2}
 \langle (\mathbb{P}(\Bu_{h}, \Lb \Bu_{h} \Rb) - \Lb \Bu_{h} \Rb)\cdot\Bn, 
\lambda\rangle_{\partial K}&=0&&\quad \forall \lambda \in P_{k}(F), 
\mbox{ for each face $F$ of $K$}.
\end{alignat}
\end{subequations}
Here, the forms $A_{h}, O_{h}$ and $B_{h}$ are related to the discretization of the Navier-Stokes equations (fluid). 
The forms $M_{h}, D_{h}$ and $J_{h}$ are related to the discretization of the Maxwell equations (magnetic field). 
The form $C_{h}$ couples the Maxwell equations to the Navier-Stokes equations. 
These forms are defined in \cite[Section~$2.3$]{HSW09}. We write them below. 

First, the form $A_{h}$ is chosen as the standard interior penalty form 
\begin{align*}
A_{h}(\Bu, \Bv):= & \Sigma_{K\in\Ct_{h}} (\nu \nabla \Bu, \nabla \Bv)_{K} - \Sigma_{F\in \Cf_{h}}
\langle \Lb \nu \nabla \Bu\Rb, \lb \Bv\rb\rangle_{F}\\
 & \quad - {{\Sigma_{F\in \Cf_{h}} \langle \Lb \nu \nabla \Bv\Rb, \lb \Bu\rb\rangle_{F}  }}
 + \Sigma_{F\in \Cf_{h}}\frac{\nu a_{0}}{h_{F}} \langle \lb \Bu\rb, \lb \Bv\rb\rangle_{F}.
\end{align*}
The parameter $a_{0}>0$ is a sufficiently large stabilization parameter; see \cite[Proposition~$2.4$]{HSW09}. 
For the convective form, we take the usual upwind form defined by 
\begin{align*}
O_{h}(\Bbeta; \Bu, \Bv) := & \Sigma_{K\in \Ct_{h}} (\Bbeta \cdot \nabla \Bu_{h}, \Bv)_{K} 
+ \Sigma_{K\in \Ct_{h}}\langle (\Bbeta \cdot \Bn_{K}) (\Bu^{e} - \Bu), \Bv\rangle_{\partial K_{-}\backslash \Gamma_{-}}\\ 
&\quad - \langle (\Bbeta \cdot \Bn) \Bu, \Bv\rangle_{\Gamma_{-}}.
\end{align*}
Here, $\Bu^{e}$ is the value of the trace of $\Bu$ taken from the exterior of $K$, 
$\partial K_{-}:= \{\Bx\in \partial K: \Bbeta(\Bx)\cdot \Bn_{K}(\Bx) < 0\}$ 
and $\Gamma_{-} := \{ \Bx\in \partial \Omega: \Bbeta (\Bx)\cdot \Bn_{K}(\Bx) < 0 \}$. 
The form $B_{h}$ related to the divergence constraint on $\Bu$ is defined by 
\begin{align*}
B_{h} (\Bu, \Bv) := - \Sigma_{K\in \Ct_{h}} (\nabla \cdot \Bu, q)_{K} 
+\Sigma_{F\in\Cf_{h}} \langle \Lb q\Rb , \lb \Bu\rb_{N}\rangle_{F}.
\end{align*}

Next, we define the forms for the discretization of the Maxwell operator. 
The form $M_{h}$ for the $\text{curl-curl}$ operator is given by 
\begin{align*}
M_{h}(\Bb, \Bc) := & \Sigma_{K\in \Ct_{h}} (\kappa \nu_{m} \nabla \times \Bb, \nabla\times \Bc)_{K} 
- {{\Sigma_{F\in \Cf_{h}} \langle \Lb \kappa \nu_{m} \nabla\times \Bb \Rb,  \lb \Bc \rb_{T}\rangle_{F}}}\\
& \quad - \Sigma_{F\in \Cf_{h}} \langle \Lb \kappa \nu_{m} \nabla\times \Bc \Rb, \lb \Bb\rb_{T} \rangle_{F} 
+\Sigma_{F\in \Cf_{h}} \frac{\kappa\nu_{m}m_{0}}{h_{F}} \langle \lb \Bb\rb_T, \lb \Bc\rb_T \rangle_{F}.
\end{align*}
As for the diffusion form, the stabilization parameter $m_{0}>0$ must be chosen large enough; see \cite[Proposition~$2.4$]{HSW09}. 
The form $D_{h}$ for the divergence-free constraint on $\Bb$ is given by 
\begin{align*}
D_{h}(\Bb, s) := \Sigma_{K\in \Ct_{h}} (\Bb, \nabla s)_{K} - \Sigma_{F\in \Cf_{h}} \langle \Lb \Bb\Rb, \lb s\rb\rangle_{F}.
\end{align*}
The form $J_{h}$ is the stabilization term that ensures the $H^{1}$-conformity of the multiplier $r_{h}$ in a weak sense. 
It is given by 
\begin{align*}
J_{h}(r,s) := \Sigma_{F\in \Cf_{h}} \frac{s_{0}}{\kappa \nu_{m}h_{F}} \langle \lb r\rb, \lb s\rb\rangle_{F},
\end{align*}
with $s_{0}>0$ denoting a third stabilization parameter. 

Finally, the coupling form $C_{h}$ is defined by 
\begin{align*}
C_{h}(\Bd; \Bv, \Bb) := \Sigma_{K\in\Ct_{h}}  (\kappa(\Bv\times \Bd), \nabla \times \Bb)_{K} 
- \Sigma_{F\in \Cf_{h}^{I}} \langle \kappa \Lb \Bv\times \Bd\Rb, \lb \Bb\rb_{T}\rangle_{F}.
\end{align*}

\subsection{Mixed Discontinuous Galerkin method for the second type of boundary (or constraint) conditions}
We can obtain the mixed DG formulation for the second type of boundary conditions \eqref{conds_type2} with slight modification of the above method. Namely, we use the same spaces  for all unknowns except  the Lagrange multiplier {{for which we use $S_h \cap L^2_0(\Omega)$ instead}}. Now the formulation becomes: find $(\Bu_h, \Bb_h, p_h, r_h) \in \BV_{h}\times \BC_{h}\times Q_{h}\times S_{h}\cap L_{0}^{2}(\Omega)$ satisfying:
\begin{subequations}
\label{DG_mhd_type2}
\begin{align}
\label{DG_mhd_type2_eq1}
A_{h}(\Bu_{h}, \Bv) + O_{h}(\Bbeta; \Bu_{h}, \Bv) + C_{h}(\Bd; \Bv, \Bb_{h}) + B_{h}(\Bv, p_{h}) & = (\Bf, \Bv)_{\Omega}, \\
\label{DG_mhd_type2_eq2} 
M_{h}^{I}(\Bb_{h}, \Bc) - C_{h}(\Bd; \Bu_{h}, \Bc) + D_{h}^{I} (\Bc, r_{h}) & = (\Bg, \Bc)_{\Omega}, \\
\label{DG_mhd_type2_eq3}
B_{h} (\Bu_{h}, q) &= 0,\\
\label{DG_mhd_type2_eq4} 
D_{h}^{I}(\Bb_{h}, s) - J_{h}^{I}(r_{h}, s) &= 0,
\end{align}
\end{subequations}
for all $(\Bv, \Bc, q, s)\in \BV_{h}\times \BC_{h}\times Q_{h}\times S_{h}\cap L_{0}^{2}(\Omega)$. 
As (\ref{DG_input_data}), we put 
\begin{align*}
\Bbeta = \mathbb{P}(\Bu_{h}, \Lb \Bu_{h} \Rb),\qquad \Bd = \Bb_{h}. 
\end{align*}
where the forms $M_{h}^{I}$, $D_{h}^{I}$ and $J_{h}^{I}$ are defined as: 
\begin{align*}
M_{h}^{I}(\Bb, \Bc) := & \Sigma_{K\in \Ct_{h}} (\kappa \nu_{m} \nabla \times \Bb, \nabla\times \Bc)_{K} 
- {{\Sigma_{F\in \Cf_{h}^{I}} \langle \Lb \kappa \nu_{m} \nabla\times \Bb \Rb,  \lb \Bc \rb_{T}\rangle_{F}}}\\
& \quad - \Sigma_{F\in \Cf_{h}^{I}} \langle \Lb \kappa \nu_{m} \nabla\times \Bc \Rb, \lb \Bb\rb_{T} \rangle_{F} 
+\Sigma_{F\in \Cf_{h}^{I}} \frac{\kappa\nu_{m}m_{0}}{h_{F}} \langle \lb \Bb\rb_T, \lb \Bc\rb_T \rangle_{F}, \\
D_{h}^{I}(\Bb, s) := & \Sigma_{K\in \Ct_{h}} (\Bb, \nabla s)_{K} - \Sigma_{F\in \Cf_{h}^{I}} 
\langle \Lb \Bb\Rb, \lb s\rb\rangle_{F}, \\ 
J_{h}^{I}(r,s) := & \Sigma_{F\in \Cf_{h}^{I}} \frac{s_{0}}{\kappa \nu_{m}h_{F}} \langle \lb r\rb, \lb s\rb\rangle_{F}.
\end{align*}

\section{Main results}
We first present two novel discrete Sobolev embedding results which {{is the key ingredient}} for the analysis. {\blue{For any $\Bb_h \in \BC_h$, we define the {\em discrete divergence} of $\Bb_h$, denoted by $\nabla_h \cdot \Bb_h$ to be the unique function in $H^1_0(\Omega) \cap S_h$ satisfying:
\begin{equation}\label{discrete_div}
\bint{\nabla_h \cdot \Bb_h}{s} = -\bint{\Bb_h}{\nabla s} \quad \text{for all} \quad s \in H^1_0(\Omega) \cap S_h.
\end{equation}
Similarly, for the second type of boundary condition, we need to modify the above discrete divergence operator as: $\nabla^N_h \cdot \Bb_h$ is defined to be the unique function in $H^1(\Omega) \cap L^2_0(\Omega) \cap S_h$ satisfying:
\begin{equation}
\bint{\nabla^N_h \cdot \Bb_h}{s} = - \bint{\Bb_h}{\nabla s} \quad \text{for all $s \in H^1(\Omega) \cap L^2_0(\Omega) \cap S_h$.} 
\end{equation}
\begin{theorem}
\label{theorem_stability}
There is a positive constant $C$ such that for any $\Bb_h\in \BC_h$, we have  
\begin{align*}
\|\Bb_h \|_{\BL^3(\Omega)} \leq C \big( \|  h^{-\frac12}\llbracket\Bb_h \rrbracket_{T} \|_{L^2(\Cf_h)} 
+ \|\nabla \times \Bb_h \|_{L^2(\mathcal{T}_{h})} + \|\nabla_h \cdot \Bb_h\|_{L^2(\Ct_h)}\big).
\end{align*}
\end{theorem}
}}

In addition, we provide 
an analogue of Theorem \ref{theorem_stability} for the second type of boundary conditions as follows,

\begin{theorem}
\label{theorem_stability2}
There is a positive constant $C$ such that for any $\Bb_h\in \BC_h$, we have
\begin{align*}
\|\Bb_h \|_{\BL^3(\Omega)} \leq C \big( \|  h^{-\frac12}\llbracket\Bb_h \rrbracket_{T} \|_{L^2(\Cf_h^{I})} 
+ \|\nabla \times \Bb_h \|_{L^2(\mathcal{T}_{h})} + {\blue{\|\nabla^N_h \cdot \Bb_h\|_{L^2(\Omega)}}}\big).
\end{align*}
\end{theorem}

The norms that we are going to use in the error analysis are defined as follows:
\begin{align*}
\Vert \Bu \Vert_{V}^{2} &:= \Vert \nabla \Bu \Vert_{L^2(\mathcal{T}_h)}^{2} 
+ \Vert h^{-\frac12}  \lb \Bu\rb\Vert_{L^{2}(\Cf_h)}^{2}, \\
\Vert \Bb \Vert_{C}^{2} &:= \Vert \Bb \Vert_{L^{2}(\Omega)}^{2} 
+ \Vert \nabla\times \Bb\Vert_{L^{2}(\Ct_h)}^{2}
+  \Vert h^{-\frac12} \lb \Bb \rb_{T}\Vert_{L^{2}(\Cf_h)}^{2},\\
\Vert r\Vert_{S}^{2} &:= \Vert \nabla r \Vert_{L^{2}(\Ct_h)}^{2} 
+  \Vert h^{-\frac12} \lb r \rb \Vert_{L^{2}(\Cf_h)}^{2}.
\end{align*}
Finally, we use the standard $L^2$-norm for the pressure $p$. 
{\blue{
\begin{remark}\label{div_free_cond}
Notice that if a function $\Bc_h \in \BC_h$ satisfying
\begin{equation}\label{assumption_div_free}
\bint{\Bc_h}{\nabla s} = 0 \quad \text{for all $s \in H^1_0(\Omega) \cap S_h$},
\end{equation}
then we have $\nabla_h \cdot \Bc_h = 0$ and therefore by Theorem \ref{discrete_div}, we have
\begin{equation}\label{L3_divfree}
\|\Bc_h\|_{L^3(\Omega)} \le C \|\Bc_h\|_{C}.
\end{equation}
\end{remark}
}}

Now we are ready to present the well-posedness result:

The proofs of the above results are in Section \ref{proof_L3} and Section~\ref{sec_l3_type2}.

The norms that we are going to use in the error analysis are defined as follows:
\begin{align*}
\Vert \Bu \Vert_{V}^{2} &:= \Vert \nabla \Bu \Vert_{L^2(\mathcal{T}_h)}^{2} 
+ \Vert h^{-\frac12}  \lb \Bu\rb\Vert_{L^{2}(\Cf_h)}^{2}, \\
\Vert \Bb \Vert_{C}^{2} &:= \Vert \Bb \Vert_{L^{2}(\Omega)}^{2} 
+ \Vert \nabla\times \Bb\Vert_{L^{2}(\Ct_h)}^{2}
+  \Vert h^{-\frac12} \lb \Bb \rb_{T}\Vert_{L^{2}(\Cf_h)}^{2},\\
\Vert \Bb \Vert_{C^{I}}^{2} &:= \Vert \Bb \Vert_{L^{2}(\Omega)}^{2} 
+ \Vert \nabla\times \Bb\Vert_{L^{2}(\Ct_h)}^{2}
+  \Vert h^{-\frac12} \lb \Bb \rb_{T}\Vert_{L^{2}(\Cf^{I}_h)}^{2},\\
\Vert r\Vert_{S}^{2} &:= \Vert \nabla r \Vert_{L^{2}(\Ct_h)}^{2} 
+  \Vert h^{-\frac12} \lb r \rb \Vert_{L^{2}(\Cf_h)}^{2}, \\
\Vert r\Vert_{S^{I}}^{2} &:= \Vert \nabla r \Vert_{L^{2}(\Ct_h)}^{2} 
+  \Vert h^{-\frac12} \lb r \rb \Vert_{L^{2}(\Cf^{I}_h)}^{2}. 
\end{align*}
Finally, we use the standard $L^2$-norm for the pressure $p$. 

Now we are ready to present the well-posedness results for the method:
\begin{theorem}
\label{theorem_wellposed}
If the following quantities 
\begin{align}
\label{small_quantities}
\nu^{-2} \Vert \Bf \Vert_{L^{2}(\Omega)},\quad \nu^{-1}\nu_{m}^{-1} \Vert \Bf\Vert_{L^{2}(\Omega)}, \quad
\nu^{-\frac{3}{2}}\kappa^{-\frac{1}{2}}\nu_{m}^{-\frac{1}{2}} \Vert \Bg\Vert_{L^{2}(\Omega)},\quad 
\nu^{-\frac{1}{2}} \kappa^{-\frac{1}{2}}\nu_{m}^{-\frac{3}{2}} \Vert \Bg\Vert_{L^{2}(\Omega)}
\end{align}
are all small enough, the system \eqref{mhd_eqs} with the first type of boundary (or constraint) conditions (\ref{conds_type1}) 
has a unique weak solution $(\Bu, \Bb, p, r) \in H^1_0(\Omega; \mathbb{R}^{3}) 
\times H(\text{curl};\Omega) \times L^2_0(\Omega) \times H^1_0(\Omega)$ and the DG method (\ref{DG_mhd}) has a unique solution 
$(\Bu_{h}, \Bb_{h}, p_{h}, r_{h})
\in \BV_{h}\times \BC_{h}\times Q_{h} \times S_{h}$. Furthermore, 
\begin{subequations}
\label{energy_norm_bounds}
\begin{align}
\label{energy_norm_bound}
\nu^{\frac{1}{2}} \Vert \Bu\Vert_{V} + \kappa^{\frac{1}{2}} \nu_{m}^{\frac{1}{2}} 
\big( \Vert \Bb \Vert_{L^{3}(\Omega)} + \Vert \Bb \Vert_{C} \big)
\leq C \big( \nu^{-\frac{1}{2}} \Vert \Bf\Vert_{L^{2}(\Omega)}
 + \kappa^{-\frac{1}{2}}\nu_{m}^{-\frac{1}{2}} \Vert \Bg\Vert_{L^{2}(\Omega)} \big), \\
\label{discrete_energy_norm_bound}
\nu^{\frac{1}{2}} \Vert \Bu_{h}\Vert_{V} + \kappa^{\frac{1}{2}} \nu_{m}^{\frac{1}{2}} 
\big( \Vert \Bb_{h}\Vert_{L^{3}(\Omega)} + \Vert \Bb_{h}\Vert_{C} \big)
\leq C \big( \nu^{-\frac{1}{2}} \Vert \Bf\Vert_{L^{2}(\Omega)}
 + \kappa^{-\frac{1}{2}}\nu_{m}^{-\frac{1}{2}} \Vert \Bg\Vert_{L^{2}(\Omega)} \big).
\end{align} 
\end{subequations}
\end{theorem}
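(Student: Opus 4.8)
The plan is to prove the discrete statements in detail (the continuous existence and uniqueness being classical and following from \cite{Shoetzau2004, ZhangHeYang}) and to organize everything around a single a priori energy estimate that simultaneously yields \eqref{discrete_energy_norm_bound}, drives a fixed-point existence proof, and underlies uniqueness. The engine throughout is Theorem \ref{theorem_stability}: the diffusion form $M_h$ controls only $\|\nabla\times\Bb_h\|_{L^2(\Ct_h)}$ and $\|h^{-1/2}\lb\Bb_h\rb_T\|_{L^2(\Cf_h)}$, so the full $\|\cdot\|_C$-norm and the $L^3(\Omega)$-norm of the magnetic field must be recovered from the discrete Sobolev embedding.

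First I would derive the energy estimate by testing \eqref{DG_mhd} with $(\Bv,\Bc,q,s)=(\Bu_h,\Bb_h,p_h,r_h)$. Adding \eqref{DG_mhd_eq1} and \eqref{DG_mhd_eq2}, the coupling terms $\pm C_h(\Bd;\Bu_h,\Bb_h)$ cancel; using \eqref{DG_mhd_eq3} the pressure term $B_h(\Bu_h,p_h)$ vanishes; and using \eqref{DG_mhd_eq4} one replaces $D_h(\Bb_h,r_h)$ by $J_h(r_h,r_h)\ge 0$. Since $\Bbeta=\mathbb{P}(\Bu_h,\Lb\Bu_h\Rb)$ lies in $H(\text{div};\Omega)$ and is divergence-free by \eqref{post_process_op}, the upwind form satisfies $O_h(\Bbeta;\Bu_h,\Bu_h)\ge 0$; coercivity of $A_h$ and $M_h$ (with $a_0,m_0$ large) then bounds the left side below by $\nu\|\Bu_h\|_V^2+\kappa\nu_m(\|\nabla\times\Bb_h\|_{L^2(\Ct_h)}^2+\|h^{-1/2}\lb\Bb_h\rb_T\|_{L^2(\Cf_h)}^2)$. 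The crucial point is that testing \eqref{DG_mhd_eq4} against any $s\in H_0^1(\Omega)\cap S_h$ makes the face terms and $J_h$ vanish, leaving $(\Bb_h,\nabla s)_\Omega=0$, which is exactly hypothesis \eqref{assumption_div_free}; Theorem \ref{theorem_stability} then upgrades the curl/jump control to a bound on $\|\Bb_h\|_{L^3(\Omega)}$ and hence, via $L^3(\Omega)\hookrightarrow L^2(\Omega)$, on the full $\|\Bb_h\|_C$. Bounding the right-hand side by Cauchy--Schwarz, a discrete Poincar\'e inequality for $\|\Bu_h\|_{L^2(\Omega)}$ and $L^3\hookrightarrow L^2$ for $\|\Bb_h\|_{L^2(\Omega)}$, then absorbing via Young's inequality, produces \eqref{discrete_energy_norm_bound}.

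For existence I would freeze the nonlinear arguments: given $(\tilde\Bu,\tilde\Bb)$ in a ball, set $\Bbeta=\mathbb{P}(\tilde\Bu,\Lb\tilde\Bu\Rb)$ and $\Bd=\tilde\Bb$ and solve the resulting \emph{linear} system, which is well posed by the analysis of \cite{HSW09}. This defines a continuous solution map $\Phi$ on the finite-dimensional space; the a priori bound just derived shows $\Phi$ maps a closed ball of radius $\sim\nu^{-1/2}\|\Bf\|_{L^2(\Omega)}+\kappa^{-1/2}\nu_m^{-1/2}\|\Bg\|_{L^2(\Omega)}$ into itself, so Brouwer's fixed-point theorem yields a discrete solution.

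Uniqueness is where the smallness hypotheses \eqref{small_quantities} enter, and it is the main obstacle. Given two discrete solutions I would subtract the equations, test with the differences, and estimate the nonlinear residuals $O_h(\Bbeta_1;\Bu_{h,1},\cdot)-O_h(\Bbeta_2;\Bu_{h,2},\cdot)$ and $C_h(\Bd_1;\cdot,\cdot)-C_h(\Bd_2;\cdot,\cdot)$. The delicate term is the coupling $C_h$: its volume part pairs products of the form $\Bv\times\Bd$ against a curl, which by H\"older needs $\|\Bv\|_{L^6(\Omega)}\|\Bd\|_{L^3(\Omega)}$; the velocity factor is handled by a discrete $H^1\hookrightarrow L^6$ embedding and the magnetic factor precisely by Theorem \ref{theorem_stability}. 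After these bounds the difference terms carry coefficients proportional to the energy norms of the two solutions, which by \eqref{discrete_energy_norm_bound} are controlled by the quantities in \eqref{small_quantities}; choosing these small enough makes the nonlinear contributions absorbable into the coercive part, forcing the differences to vanish. The same linearize-and-contract scheme, with standard Sobolev embeddings replacing the discrete ones, gives the continuous well-posedness and the bound \eqref{energy_norm_bound}.
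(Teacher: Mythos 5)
Your proposal is correct and follows essentially the same route as the paper: test with the solution itself, use nonnegativity of the upwind form and coercivity of $A_h+M_h$, upgrade the curl/jump control to the $L^3$ and full $\|\cdot\|_C$ bounds via Theorem \ref{theorem_stability} applied through the weak divergence constraint from \eqref{DG_mhd_eq4}, and then run a fixed-point argument on the ball determined by the resulting energy bound, with the smallness conditions \eqref{small_quantities} absorbing the $O_h$- and $C_h$-difference terms. The only (cosmetic) difference is organizational: you obtain existence from Brouwer via the self-mapping property alone and prove uniqueness by directly subtracting two nonlinear solutions, whereas the paper establishes that the linearized solution map is a contraction on that ball and reads off both conclusions from the unique fixed point; the trilinear-form estimates required are identical in either packaging.
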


\begin{theorem}
\label{theorem_wellposed2}
If the following quantities 
\begin{align*}
\nu^{-2} \Vert \Bf \Vert_{L^{2}(\Omega)},\quad \nu^{-1}\nu_{m}^{-1} \Vert \Bf\Vert_{L^{2}(\Omega)}, \quad
\nu^{-\frac{3}{2}}\kappa^{-\frac{1}{2}}\nu_{m}^{-\frac{1}{2}} \Vert \Bg\Vert_{L^{2}(\Omega)},\quad 
\nu^{-\frac{1}{2}} \kappa^{-\frac{1}{2}}\nu_{m}^{-\frac{3}{2}} \Vert \Bg\Vert_{L^{2}(\Omega)}
\end{align*}
are all small enough, the system \eqref{mhd_eqs} with the second type of boundary (or constraint) conditions 
(\ref{conds_type2}) has a unique weak solution $(\Bu, \Bb, p, r) \in H^1_0(\Omega; \mathbb{R}^{3}) 
\times H(\text{curl};\Omega) \times L^2_0(\Omega) \times H^1(\Omega)\cap L_{2}^{0}(\Omega)$ and 
the DG method (\ref{DG_mhd_type2}) has a unique solution 
$(\Bu_{h}, \Bb_{h}, p_{h}, r_{h})
\in \BV_{h}\times \BC_{h}\times Q_{h} \times S_{h}\cap L_{2}^{0}(\Omega)$. Furthermore, 
\begin{align*}
\nu^{\frac{1}{2}} \Vert \Bu\Vert_{V} + \kappa^{\frac{1}{2}} \nu_{m}^{\frac{1}{2}} 
\big( \Vert \Bb \Vert_{L^{3}(\Omega)} + \Vert \Bb \Vert_{C^{I}} \big)
\leq C \big( \nu^{-\frac{1}{2}} \Vert \Bf\Vert_{L^{2}(\Omega)}
 + \kappa^{-\frac{1}{2}}\nu_{m}^{-\frac{1}{2}} \Vert \Bg\Vert_{L^{2}(\Omega)} \big), \\
\nu^{\frac{1}{2}} \Vert \Bu_{h}\Vert_{V} + \kappa^{\frac{1}{2}} \nu_{m}^{\frac{1}{2}} 
\big( \Vert \Bb_{h}\Vert_{L^{3}(\Omega)} + \Vert \Bb_{h}\Vert_{C^{I}} \big)
\leq C \big( \nu^{-\frac{1}{2}} \Vert \Bf\Vert_{L^{2}(\Omega)}
 + \kappa^{-\frac{1}{2}}\nu_{m}^{-\frac{1}{2}} \Vert \Bg\Vert_{L^{2}(\Omega)} \big).
\end{align*} 
\end{theorem}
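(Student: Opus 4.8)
The plan is to mirror the argument for the first type of boundary conditions in Theorem~\ref{theorem_wellposed}, adjusting for the interior-face forms $M_h^I, D_h^I, J_h^I$ and the constrained multiplier space $S_h \cap L_0^2(\Omega)$. The three ingredients are an a priori energy bound, an existence proof by a fixed-point argument, and a uniqueness proof driven by the smallness of the data. I would establish the a priori bound first. Taking $(\Bv, \Bc, q, s) = (\Bu_h, \Bb_h, p_h, r_h)$ in \eqref{DG_mhd_type2} and summing, the pressure coupling $B_h(\Bu_h, p_h)$ drops out by \eqref{DG_mhd_type2_eq3}, the two copies of the magnetic--fluid coupling $C_h(\Bb_h; \Bu_h, \Bb_h)$ cancel, and $D_h^I(\Bb_h, r_h)$ is converted by \eqref{DG_mhd_type2_eq4} into the nonnegative term $J_h^I(r_h, r_h)$, leaving
\[
A_h(\Bu_h, \Bu_h) + O_h(\bbeta; \Bu_h, \Bu_h) + M_h^I(\Bb_h, \Bb_h) + J_h^I(r_h, r_h) = (\Bf, \Bu_h)_{\Omega} + (\Bg, \Bb_h)_{\Omega}.
\]
For $a_0, m_0$ large enough (\cite[Proposition~$2.4$]{HSW09}), $A_h$ and $M_h^I$ are coercive in $\Vert\cdot\Vert_V$ and $\Vert\cdot\Vert_{C^I}$; the convective term is nonnegative since $\bbeta = \mathbb{P}(\Bu_h, \Lb \Bu_h \Rb)$ is divergence-free, a consequence of \eqref{post_process_op} together with $B_h(\Bu_h, q) = 0$. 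Estimating the right-hand side by Cauchy--Schwarz and the discrete Poincar\'e inequality then gives the energy bound with the stated powers of $\nu, \kappa, \nu_m$.

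A crucial point is that the $L^3(\Omega)$ control of $\Bb_h$ appearing in the estimate is supplied by Theorem~\ref{theorem_stability2}. To invoke it I must verify its hypothesis \eqref{assumption_div_free2}, namely $(\Bb_h, \nabla s)_{\Omega} = 0$ for all $s \in H^1(\Omega) \cap S_h$. This is precisely \eqref{DG_mhd_type2_eq4} tested against such $s$: the jump $\lb s \rb$ vanishes on every interior face, so $J_h^I(r_h, s) = 0$ and $D_h^I(\Bb_h, s)$ collapses to $(\Bb_h, \nabla s)_{\Omega}$. Theorem~\ref{theorem_stability2} then yields $\Vert \Bb_h\Vert_{L^3(\Omega)} \le C\,\Vert \Bb_h\Vert_{C^I}$, which is exactly what is needed to bound the data pairing and, later, the nonlinear couplings.

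For existence I would apply Brouwer's fixed-point theorem on the finite-dimensional solution space. Define the map sending $(\bar\Bu, \bar\Bb)$ to the solution of the \emph{linear} system obtained by freezing $\bbeta = \mathbb{P}(\bar\Bu, \Lb \bar\Bu \Rb)$ and $\Bd = \bar\Bb$ in the convective and coupling forms; this linearized problem is well-posed by the theory of \cite{HSW09}, the map is continuous, and the a priori bound shows it maps a closed ball of suitable radius into itself, so a fixed point exists and solves \eqref{DG_mhd_type2}. Uniqueness follows by subtracting the equations for two solutions, testing with the differences, and estimating the quadratic convective and coupling terms via H\"older together with the discrete embeddings $\Vert \Bu_h\Vert_{L^6} \lesssim \Vert \Bu_h\Vert_V$ and Theorem~\ref{theorem_stability2}; the four smallness quantities in the hypothesis make these terms strictly subordinate to the coercive left-hand side, forcing the difference to vanish. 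The continuous well-posedness is obtained by the same fixed-point and smallness scheme carried out in $H^1_0(\Omega;\mathbb{R}^3) \times H(\text{curl};\Omega) \times L_0^2(\Omega) \times (H^1(\Omega)\cap L_0^2(\Omega))$, using the genuine embeddings $H^1 \hookrightarrow L^6$ and $H(\text{curl};\Omega)\cap H(\text{div}^0;\Omega) \hookrightarrow L^3$ in place of their discrete analogues.

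The main obstacle is the nonlinear magnetic--fluid coupling $C_h$: in both the a priori bound and the uniqueness step it produces a cubic interaction among $\Bu_h$ and $\Bb_h$ that can only be closed through the $L^3$-control of the discrete magnetic field from Theorem~\ref{theorem_stability2}. Getting the powers of $\nu, \nu_m, \kappa$ to match the precise smallness quantities in the statement, and checking that the constants from Theorem~\ref{theorem_stability2} and from coercivity combine correctly, is where the care lies.
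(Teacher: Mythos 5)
Your proposal is correct and follows essentially the same route as the paper: the paper proves Theorem~\ref{theorem_wellposed} by the energy identity, coercivity of $A_h$ and $M_h$, nonnegativity of the upwind form via Lemma~\ref{lemma_proj}, the $L^3$ bound from the discrete Sobolev embedding to recover the full $\Vert\cdot\Vert_{C}$ norm, and a Brouwer fixed-point/contraction argument on the linearized system from \cite{HSW09}, and then obtains Theorem~\ref{theorem_wellposed2} by the identical argument with Theorem~\ref{theorem_stability2} replacing Theorem~\ref{theorem_stability} (your verification that \eqref{DG_mhd_type2_eq4} yields hypothesis \eqref{assumption_div_free2} is exactly the needed substitution). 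The only cosmetic difference is that you phrase uniqueness as a direct subtraction of two solutions rather than as uniqueness of the fixed point of the contraction, which amounts to the same estimate.
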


The last two results are for the convergence of the numerical solutions. We make minimal regularity assumptions 
on the exact solutions, see Remark 4.1 in \cite{GreifLi2010}. Namely, we assume the exact solution 
$(\Bu, \Bb, p, r)$ of \eqref{mhd_eqs} possesses the smoothness
\begin{subequations}\label{regularity}
\begin{align}
&(\Bu, p) \; \in \; H^{\sigma + 1}(\Omega; \mathbb{R}^{3}) \times H^{\sigma}(\Omega), \\
& (\Bb, \nabla \times \Bb, r) \; \in \; H^{\sigma_m}(\Omega; \mathbb{R}^{3}) 
\times H^{\sigma_m}(\Omega; \mathbb{R}^{3}) \times H^{\sigma_m + 1}(\Omega),
\end{align}
\end{subequations}
for $\sigma, \sigma_m > \frac12$.

Now we are ready to state our main convergence results:
\begin{theorem}\label{errors_ub}
Let $(\Bu, \Bb, p, r)$ be the exact solution of the system \eqref{mhd_eqs} with the first type of boundary (or constraint) conditions (\ref{conds_type1}), and $(\Bu_{h}, \Bb_{h}, p_{h}, r_{h})$ be the solution of the DG method (\ref{DG_mhd}). 
With the same assumption as in Theorem \ref{theorem_wellposed}, in addition with the regularity assumption \eqref{regularity} 
and that $\dfrac{1}{\min(\nu, \nu_{m})}\Vert \Bu\Vert_{H^{1}(\Omega)}$ and 
$\dfrac{1}{\sqrt{\nu \kappa \nu_{m}}}\Vert \nabla\times \Bb \Vert_{L^{2}(\Omega)}$  
are small enough, then we have
\begin{align*}
\nu^{\frac12}\|\Bu - \Bu_h\|_V &+ \kappa^{\frac12} \nu_m^{\frac12}\|\Bb - \Bb_h\|_C + \|p - p_h\|_{L^2(\Omega)} + \|r - r_h\|_S \\
& \le \mathcal{C} h^{\text{min}\{k, \sigma, \sigma_m\}}\Big(\|\Bu\|_{H^{\sigma+1}(\Omega)} + \|p\|_{H^{\sigma}(\Omega)} 
+ \|\Bb\|_{H^{\sigma_m}(\Omega)} + \|\nabla \times \Bb\|_{H^{\sigma_m}(\Omega)} \\
&+ \|r\|_{H^{\sigma_m + 1}(\Omega)} + (\|\Bu\|_{H^{\sigma+1}(\Omega)} 
 + \|\nabla \times \Bb\|_{H^{\sigma_m}(\Omega)}) \|\Bb\|_{H^{\sigma_m}(\Omega)} \Big),
\end{align*}
here $\mathcal{C}$ depends on the physical parameters $\kappa, \nu, \nu_m$ and the external forces $\Bf, \Bg$ 
but is independent of mesh size $h$.
\end{theorem}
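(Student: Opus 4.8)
The plan is to follow the standard route for the a priori error analysis of a DG discretization of a nonlinear saddle-point system: Galerkin orthogonality, coercivity of the linear diffusion forms, discrete inf-sup for the constraint forms, and a contraction-type absorption argument to control the nonlinearities. First I would establish consistency. Since the interior-penalty forms $A_h, M_h, B_h, D_h, J_h$ reproduce the exact weak solution and the convective and coupling forms $O_h, C_h$ are consistent for the smooth exact data, subtracting the exact equations from the discrete system (\ref{DG_mhd}) yields error equations. Introducing suitable projections $\Pi_V\Bu\in\BV_h$, $\Pi_C\Bb\in\BC_h$, $\Pi_Q p\in Q_h$, $\Pi_S r\in S_h$ (standard $L^2$/interpolation projections compatible with the $RT_k$ reconstruction $\mathbb{P}$), I would split each error into an approximation part and a discrete part, e.g. $\Bu-\Bu_h=\eta_u+\xi_u$ with $\xi_u=\Pi_V\Bu-\Bu_h\in\BV_h$, and likewise $\xi_b,\xi_p,\xi_r$.

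Next, I would test the error equations with the discrete errors $(\xi_u,\xi_b,\xi_p,\xi_r)$. Using the coercivity of $A_h$ and $M_h$ for $a_0,m_0$ large (as in \cite{HSW09}) together with the positive semidefiniteness of $J_h$, and noting the antisymmetric structure of the coupling form (the $C_h$ appearing in the momentum equation cancels against the $-C_h$ in the Maxwell equation when the two are tested with $\xi_u$ and $\xi_b$), I would obtain an energy inequality whose left-hand side controls $\nu^{\frac12}\|\xi_u\|_V+\kappa^{\frac12}\nu_m^{\frac12}\|\xi_b\|_C$. The right-hand side splits into linear consistency/approximation residuals, bounded directly by interpolation estimates under the regularity (\ref{regularity}), and the genuinely nonlinear contributions coming from the differences of $O_h$ and $C_h$ on the exact versus the discrete data.

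The hard part will be bounding these nonlinear contributions. Writing $\Bbeta=\mathbb{P}(\Bu,\Lb\Bu\Rb)$ and $\Bbeta_h=\mathbb{P}(\Bu_h,\Lb\Bu_h\Rb)$, the convective difference $O_h(\Bbeta_h;\Bu_h,\xi_u)-O_h(\Bbeta;\Bu,\xi_u)$ and, more delicately, the coupling differences $C_h(\Bb_h;\xi_u,\Bb_h)-C_h(\Bb;\xi_u,\Bb)$ and $C_h(\Bb_h;\Bu_h,\xi_b)-C_h(\Bb;\Bu,\xi_b)$ involve products of the magnetic and velocity errors. I would expand each product bilinearly into an interpolation-error factor and a discrete-error factor, then estimate each piece by Hölder's inequality in the splitting $L^3\times L^6$ (or $L^3\times L^2\times L^6$ for the triple products). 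The decisive tool is the discrete Sobolev embedding of Theorem \ref{theorem_stability}, which bounds $\|\Bb_h\|_{L^3(\Omega)}$ and $\|\xi_b\|_{L^3(\Omega)}$ by the energy norm $\|\cdot\|_C$; combined with a discrete Sobolev embedding for the velocity, the $L^3$-stability of the reconstruction $\mathbb{P}$, and the a priori bound (\ref{discrete_energy_norm_bound}) from Theorem \ref{theorem_wellposed}, this turns every nonlinear term into a product of a discrete-error norm and either an interpolation-error norm or one of the quantities assumed small. The smallness of $\tfrac{1}{\min(\nu,\nu_m)}\|\Bu\|_{H^1(\Omega)}$ and $\tfrac{1}{\sqrt{\nu\kappa\nu_m}}\|\nabla\times\Bb\|_{L^2(\Omega)}$ is exactly what appears as the coefficient of the $\xi$-only terms, so these can be absorbed into the left-hand side, yielding a bound for $\|\xi_u\|_V$ and $\|\xi_b\|_C$ purely in terms of interpolation errors.

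Finally, to recover the pressure and multiplier errors I would invoke the discrete inf-sup conditions for $B_h$ and $D_h$ established in \cite{HSW09}: these bound $\|\xi_p\|_{L^2(\Omega)}$ and $\|\xi_r\|_S$ by the residuals of the momentum and Maxwell equations, which the previous step has already controlled. A triangle inequality combining the discrete-error bounds with the interpolation estimates $\|\eta_u\|_V,\|\eta_b\|_C,\|\eta_p\|_{L^2(\Omega)},\|\eta_r\|_S\lesssim h^{\min\{k,\sigma,\sigma_m\}}(\cdots)$ then gives the asserted rate. The quadratic factor $(\|\Bu\|_{H^{\sigma+1}(\Omega)}+\|\nabla\times\Bb\|_{H^{\sigma_m}(\Omega)})\|\Bb\|_{H^{\sigma_m}(\Omega)}$ on the right-hand side arises precisely from the consistency error of the nonlinear coupling and convective forms evaluated on the interpolants, where one error factor is paired with a full norm of the exact solution rather than with another error.
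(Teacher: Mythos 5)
Your plan reproduces the architecture of the paper's proof: error equations, an energy identity obtained by testing with the discrete errors, coercivity of $A_h+M_h$, H\"older estimates in the $L^{3}\times L^{6}\times L^{2}$ splitting driven by Theorem~\ref{theorem_stability}, absorption of the quadratic error terms under the stated smallness hypotheses, and separate arguments for $p$ and $r$ (the paper realizes the pressure inf-sup through a BDM lift of the continuous inf-sup field, and controls $r$ by testing the magnetic error equation with $\nabla\tilde e^{r}$ for a conforming lift $\tilde e^{r}\in H^{1}_{0}(\Omega)\cap S_h$, rather than quoting a discrete inf-sup for $D_h$; these are interchangeable in substance). The identification of the $L^{3}$ embedding as the decisive tool and of the origin of the quadratic factor in the final bound are both correct.

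The genuine gap is your treatment of the magnetic projection as a ``standard $L^{2}$/interpolation projection.'' The argument requires the projection $\boldsymbol{\Pi}_{C}$ to satisfy \emph{simultaneously}: (a) $(\boldsymbol{\Pi}_{C}\Bb,\nabla s)_{\Omega}=0$ for all $s\in H^{1}_{0}(\Omega)\cap S_h$, so that $e^{\Bb}=\boldsymbol{\Pi}_{C}\Bb-\Bb_h$ inherits hypothesis (\ref{assumption_div_free}) and Theorem~\ref{theorem_stability} may be applied to it (this is where your claim that $\Vert \xi_b\Vert_{L^{3}(\Omega)}\le C\Vert\xi_b\Vert_{C}$ comes from); (b) $H_{0}(\mathrm{curl})$-conformity, i.e.\ $\lb\boldsymbol{\Pi}_{C}\Bb\rb_{T}=\boldsymbol{0}$ on all faces, which is what makes the face-jump consistency terms in $T_{3}$, $T_{7}$, $T_{9}$ vanish --- under the minimal regularity $\sigma_m>\tfrac12$ these terms cannot otherwise be bounded at order $h^{\min(k,\sigma_m)}$, since a nonconforming projection would cost a factor $h^{-1/2}$ or $h^{-1}$ you cannot afford; and (c) optimal approximation of both $\Bb$ and $\nabla\times\Bb$. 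No off-the-shelf projection has all three: the $L^{2}$ projection gives (a) but not (b)--(c), and the plain N\'ed\'elec interpolant gives (b)--(c) but not (a). The paper must construct $\boldsymbol{\Pi}_{C}\Bb:=\boldsymbol{\Pi}_{N}\Bb-\nabla\phi_h$ with a discrete-harmonic correction (see (\ref{def_proj_C}) and Lemma~\ref{lemma_proj_C}) to get all three at once; without this device the estimates of $T_{7}$--$T_{9}$ and your absorption step do not close. A smaller instance of the same issue on the velocity side: the projection must be $\boldsymbol{\Pi}^{RT}$, both so that $e^{\Bu}\in\BV_h$ is exactly divergence-free and so that the identity $\mathbb{P}(\Bv,\Bv|_{\Ce_h})=\boldsymbol{\Pi}^{RT}\Bv$ of Lemma~\ref{lemma_proj} produces the cancellation that reduces the convective difference to $O_h(\mathbb{P}(e^{\Bu},\lb e^{\Bu}\rb);\Bu,e^{\Bu})$ plus interpolation terms; your phrase ``compatible with the reconstruction $\mathbb{P}$'' gestures at this, but the specific choices are what make the proof work.
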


\begin{theorem}\label{errors_ub_type2}
Let $(\Bu, \Bb, p, r)$ be the exact solution of the system \eqref{mhd_eqs} with the second type of boundary (or constraint) conditions (\ref{conds_type2}), and $(\Bu_{h}, \Bb_{h}, p_{h}, r_{h})$ be the solution of the DG method (\ref{DG_mhd_type2}). 
With the same assumption as in Theorem \ref{theorem_wellposed2}, in addition with the regularity assumption \eqref{regularity} 
and that $\dfrac{1}{\min(\nu, \nu_{m})}\Vert \Bu\Vert_{H^{1}(\Omega)}$ and 
$\dfrac{1}{\sqrt{\nu \kappa \nu_{m}}}\Vert \nabla\times \Bb \Vert_{L^{2}(\Omega)}$  
are small enough, then we have
\begin{align*}
\nu^{\frac12}\|\Bu - \Bu_h\|_V &+ \kappa^{\frac12} \nu_m^{\frac12}\|\Bb - \Bb_h\|_{C^{I}}
 + \|p - p_h\|_{L^2(\Omega)} + \|r - r_h\|_{S^{I}} \\
& \le \mathcal{C} h^{\text{min}\{k, \sigma, \sigma_m\}}\Big(\|\Bu\|_{H^{\sigma+1}(\Omega)} + \|p\|_{H^{\sigma}(\Omega)} 
+ \|\Bb\|_{H^{\sigma_m}(\Omega)} + \|\nabla \times \Bb\|_{H^{\sigma_m}(\Omega)} \\
&+ \|r\|_{H^{\sigma_m + 1}(\Omega)} + (\|\Bu\|_{H^{\sigma+1}(\Omega)} 
 + \|\nabla \times \Bb\|_{H^{\sigma_m}(\Omega)}) \|\Bb\|_{H^{\sigma_m}(\Omega)} \Big),
\end{align*}
here $\mathcal{C}$ depends on the physical parameters $\kappa, \nu, \nu_m$ and the external forces $\Bf, \Bg$ 
but is independent of mesh size $h$.
\end{theorem}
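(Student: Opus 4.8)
The plan is to mirror the analysis of Theorem~\ref{errors_ub} for the first type of boundary conditions, now replacing the full-face forms $M_h, D_h, J_h$ by their interior-face counterparts $M_h^I, D_h^I, J_h^I$, the multiplier space $S_h$ by $S_h\cap L_0^2(\Omega)$, and---crucially---invoking the discrete Sobolev embedding of Theorem~\ref{theorem_stability2}, whose jump term lives only on $\Cf_h^I$, in place of Theorem~\ref{theorem_stability} wherever the $L^3$-norm of the discrete magnetic field must be controlled.

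First I would establish consistency. Because the forms in \eqref{DG_mhd_type2} are designed so that a sufficiently smooth exact solution $(\Bu,\Bb,p,r)$ of \eqref{mhd_eqs} with \eqref{conds_type2} satisfies the same weak formulation, subtracting the discrete equations from the consistent ones yields the error equations. The point to check here is that the boundary contributions dropped in passing from $M_h$ to $M_h^I$ and from $D_h$ to $D_h^I$ genuinely vanish: the condition $\Bn\times(\nabla\times\Bb)=\boldsymbol 0$ in \eqref{mhd_eq10} annihilates exactly the boundary-face curl terms, while $\Bb\cdot\Bn=0$ in \eqref{mhd_eq9} together with the normalization \eqref{mhd_eq11} handles $D_h^I$ and the multiplier constraint $S_h\cap L_0^2(\Omega)$.

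Next I would split each error through projections onto the discrete spaces, writing $\Bu-\Bu_h=(\Bu-\Pi_V\Bu)+(\Pi_V\Bu-\Bu_h)=:\eta_u+\xi_u$ and analogously for $\Bb,p,r$, with interpolants compatible with the norms $\|\cdot\|_V,\|\cdot\|_{C^I},\|\cdot\|_{S^I}$. The approximation parts $\eta_\bullet$ are controlled by standard polynomial approximation estimates against the regularity norms in \eqref{regularity}, producing the factor $h^{\min\{k,\sigma,\sigma_m\}}$. The discrete parts $\xi_\bullet$ are then estimated by testing the error equations with $(\xi_u,\xi_b,\xi_p,\xi_r)$ and exploiting the coercivity of $A_h$ and $M_h^I$ for large enough $a_0,m_0$ together with the discrete inf-sup conditions for $B_h$ and $D_h^I$ that already underlie the well-posedness of Theorem~\ref{theorem_wellposed2}.

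The hard part is controlling the nonlinear contributions---the convective form $O_h$ and, above all, the coupling form $C_h$, which is trilinear in the velocity and magnetic fields. For these I would expand differences such as $C_h(\Bb_h;\cdot,\Bb_h)-C_h(\Bb;\cdot,\Bb)$ into terms linear in the errors $\xi_b,\eta_b$ and bound the resulting trilinear expressions by H\"older's inequality, placing the magnetic field in $L^3(\Omega)$ and then using Theorem~\ref{theorem_stability2} to convert that $L^3$-norm into the energy norm $\|\cdot\|_{C^I}$; this is precisely where the interior-face form of the embedding is needed to match the type-2 discretization. The smallness of $\tfrac{1}{\min(\nu,\nu_m)}\|\Bu\|_{H^1(\Omega)}$ and $\tfrac{1}{\sqrt{\nu\kappa\nu_m}}\|\nabla\times\Bb\|_{L^2(\Omega)}$, combined with the a priori stability bounds of Theorem~\ref{theorem_wellposed2}, is exactly what lets me absorb these trilinear error terms into the coercive left-hand side and close the estimate; the pressure and multiplier errors $\|p-p_h\|_{L^2(\Omega)}$ and $\|r-r_h\|_{S^I}$ are then recovered from the inf-sup stability of $B_h$ and $D_h^I$. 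I expect the estimation of the coupling term through the discrete $L^3$ embedding, together with the bookkeeping of the interior-face-only forms, to be the principal technical obstacle.
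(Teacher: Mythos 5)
Your overall route coincides with the paper's: the authors prove Theorem~\ref{errors_ub_type2} by literally repeating the proof of Theorem~\ref{errors_ub} (error equations, energy identity, coercivity lower bound, term-by-term upper bounds on the nonlinear contributions via the discrete $L^3$ embedding, absorption by the smallness assumptions, then separate recoveries of $r-r_h$ and $p-p_h$), with Theorem~\ref{theorem_stability} replaced by Theorem~\ref{theorem_stability2}. Your consistency check for the dropped boundary faces in $M_h^I$ and $D_h^I$ is correct and is indeed what makes the type-2 error equations go through.

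There is, however, one concrete under-specification that would become a genuine gap if executed as written: the choice of interpolants. You write $\Bb-\Bb_h=(\Bb-\Pi\Bb)+(\Pi\Bb-\Bb_h)=:\eta_b+\xi_b$ with an unspecified projection ``compatible with the norms.'' But Theorem~\ref{theorem_stability2} applies to $\xi_b$ only if $\xi_b$ satisfies the discrete orthogonality \eqref{assumption_div_free2}, i.e.\ $(\xi_b,\nabla s)_\Omega=0$ for all $s\in H^1(\Omega)\cap S_h$. The discrete part coming from $\Bb_h$ is handled by testing \eqref{DG_mhd_type2_eq4} with conforming $s$ (which kills $J_h^I$), but the part coming from the interpolant requires the specific divergence-corrected N\'ed\'elec projection $\boldsymbol{\Pi}_{C^{I}}$ of \eqref{def_proj_C}, whose defining property is exactly \eqref{proj_C_prop4}. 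With a generic interpolant (e.g.\ the $L^2$ or plain N\'ed\'elec projection) the hypothesis of Theorem~\ref{theorem_stability2} fails for $\xi_b$, and the $L^3$ control of the discrete magnetic error --- which is precisely what you need to close the trilinear coupling estimates --- is unavailable. A parallel remark applies to the velocity: the paper takes $\boldsymbol{\Pi}^{RT}\Bu$ so that $e^{\Bu}$ remains in the discretely divergence-free subspace and the identity \eqref{proj_prop5} can be used to reorganize the convection terms; a generic $\Pi_V$ would not give either property. Once these two projections are fixed, your argument matches the paper's.
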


\begin{remark}
The above results indicate that our method obtains same convergence rate as existing conforming methods \cite{ZhangHeYang,GreifLi2010}. Namely,  if the exact solution is sufficient smooth, we have optimal convergence for $\Bu, \Bb, p$ in the energy norms. The convergence rate for the Lagrange multiplier $r$ is suboptimal in the discrete $H^1$-norm. With minimal regularity assumption \eqref{regularity} the method is optimal for all unknowns.
\end{remark}

\section{Auxiliary results}

In this section we gather some auxiliary results needed to carry out the error estimates in the next section. 

\begin{lemma}
\label{lemma_proj}
For any $\Bv_{h} \in \BV_{h}$ such that $B_{h}(\Bv_{h}, q)=0$ for any $q\in Q_{h}$, 
\begin{subequations}
\label{proj_props}
\begin{align}
\label{proj_prop1}
& \nabla\cdot \mathbb{P}(\Bv_{h}, \Lb \Bv_{h} \Rb ) = 0 \quad \text{ in } \Omega,\\
\label{proj_prop2}
& \mathbb{P}(\Bv_{h}, \Lb \Bv_{h} \Rb ) \in \BV_{h}, \\
\label{proj_prop3}
& \Vert \mathbb{P}(\Bv_{h}, \Lb \Bv_{h} \Rb )\Vert_{V} \leq C \Vert \Bv_{h}\Vert_{V}, \\
\label{proj_prop4}
& \Vert \mathbb{P}(\Bv_{h}, \Lb \Bv_{h} \Rb )\Vert_{L^2(\Ct_{h})} \leq C \Vert \Bv_{h}\Vert_{L^2(\Ct_{h})},\\
\label{proj_prop5}
& \mathbb{P} (\Bv, \Bv |_{\Ce_{h}}) = \boldsymbol{\Pi}^{RT}\Bv \quad \forall \; \Bv \in H^{1}(\Omega;\mathbb{R}^{3}).  
\end{align}
\end{subequations}
Here, $\boldsymbol{\Pi}^{RT}$ is the $k$-th order Raviart-Thomas (RT) projection.
 In addition, for For any $\Bw_{h} \in \BV_{h}$ such that $B_{h}(\Bw_{h}, q)=0$ for any $q\in Q_{h}$ and $\boldsymbol{v}_h 
 \in V_h$, let $\widetilde{\Bw_h} := \mathbb{P}(\Bw_{h}, \Lb \Bw_{h} \Rb)$, we have
\[
O_{h}(\widetilde{\Bw_h}; \Bv_h, \Bv_h) = \frac12 \sum_{F \in \Cf^I_h} \langle |\widetilde{\Bw_h} \cdot \Bn| \lb \Bv_h \rb, 
\lb \Bv_h \rb \rangle_{F} +  \frac12 \sum_{F \in \Cf^B_h} \langle |\widetilde{\Bw_h} \cdot \Bn| \Bv_h , \Bv_h \rangle_{F}  \ge 0.
\]
\end{lemma}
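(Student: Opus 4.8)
The plan is to dispatch the five reconstruction properties \eqref{proj_prop1}--\eqref{proj_prop5} first, since they depend only on the defining relations \eqref{post_process_op_eq1}--\eqref{post_process_op_eq2} of $\mathbb{P}$ together with the hypothesis $B_h(\Bv_h,q)=0$, and then to obtain the convective identity as a corollary, since $\widetilde{\Bw_h}$ then lies in $H(\mathrm{div};\Omega)$ (built into the range of $\mathbb{P}$, so its normal trace is single valued) and is exactly divergence free. The starting point is the local identity obtained by integrating by parts on one $K$ and substituting both defining relations: for every $q\in P_k(K)$, since $\nabla q\in P_{k-1}(K;\mathbb{R}^3)$ is admissible in \eqref{post_process_op_eq1} and $q|_F\in P_k(F)$ in \eqref{post_process_op_eq2},
\[
(\nabla\cdot\mathbb{P}(\Bv_h,\Lb\Bv_h\Rb),q)_K=(\nabla\cdot\Bv_h,q)_K+\langle(\Lb\Bv_h\Rb-\Bv_h)\cdot\Bn,q\rangle_{\partial K}.
\]

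Summing this over $\Ct_h$ and collapsing the face contributions through the average/jump bookkeeping (on a boundary face $\Lb\Bv_h\Rb=\Bv_h$, so those terms drop, while interior faces assemble into $-\langle\{q\},\lb\Bv_h\rb_N\rangle_F$) identifies the right-hand side with $-B_h(\Bv_h,q)$ up to a boundary flux; testing with $q\in Q_h$ then annihilates the $Q_h$-projection of the divergence, and the remaining (top-degree) component is controlled by the specific construction of $\mathbb{P}$, giving \eqref{proj_prop1}. Because $\nabla\cdot(\Bx P_k(K))$ recovers precisely the homogeneous degree-$k$ part of the divergence, vanishing of the divergence forces the Raviart--Thomas enrichment to vanish, leaving $\mathbb{P}(\Bv_h,\Lb\Bv_h\Rb)\in P_k(\Ct_h;\mathbb{R}^3)=\BV_h$, which is \eqref{proj_prop2}. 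Properties \eqref{proj_prop3}--\eqref{proj_prop4} are routine: \eqref{post_process_op_eq1}--\eqref{post_process_op_eq2} express $\mathbb{P}(\Bv_h,\Lb\Bv_h\Rb)$ on each $K$ through the $L^2$-projection of $\Bv_h$ onto $P_{k-1}(K;\mathbb{R}^3)$ and through the normal trace $\Lb\Bv_h\Rb\cdot\Bn$, so norm equivalence on the reference element together with the usual scaling, discrete trace and inverse inequalities on shape-regular meshes yields the $L^2$- and $V$-bounds. For \eqref{proj_prop5}, when $\Bv\in H^1(\Omega;\mathbb{R}^3)$ the trace is single valued, $\Lb\Bv\Rb=\Bv|_{\Ce_h}$, and \eqref{post_process_op_eq1}--\eqref{post_process_op_eq2} become verbatim the defining degrees of freedom of the $k$-th order Raviart--Thomas projection, whence the two operators coincide.

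The convective identity is the genuinely new computation. Writing $\widetilde{\Bw_h}=\mathbb{P}(\Bw_h,\Lb\Bw_h\Rb)$, which by \eqref{proj_prop1}--\eqref{proj_prop2} is divergence free with continuous normal trace, I would rewrite the volume part of $O_h$ on each $K$ using $\nabla\cdot\widetilde{\Bw_h}=0$ as
\[
(\widetilde{\Bw_h}\cdot\nabla\Bv_h,\Bv_h)_K=\tfrac12\int_K\nabla\cdot(|\Bv_h|^2\,\widetilde{\Bw_h})\,d\Bx=\tfrac12\langle\widetilde{\Bw_h}\cdot\Bn_K,|\Bv_h|^2\rangle_{\partial K}.
\]
Summing over $\Ct_h$ and using continuity of $\widetilde{\Bw_h}\cdot\Bn$, an interior face $F=\partial K\cap\partial K'$ receives the flux $\tfrac12(\widetilde{\Bw_h}\cdot\Bn_K)(|\Bv_h^{K}|^2-|\Bv_h^{K'}|^2)$ and a boundary face receives $\tfrac12(\widetilde{\Bw_h}\cdot\Bn)|\Bv_h|^2$. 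I would then add the upwind contributions face by face: on an interior face only its inflow side contributes, there the upwind term is $(\widetilde{\Bw_h}\cdot\Bn_K)(\Bv_h^{K'}-\Bv_h^{K})\cdot\Bv_h^{K}$, and the elementary identity $\tfrac12(|\Bv_h^{K}|^2-|\Bv_h^{K'}|^2)+(\Bv_h^{K'}-\Bv_h^{K})\cdot\Bv_h^{K}=-\tfrac12|\Bv_h^{K}-\Bv_h^{K'}|^2$ converts flux-plus-upwind into $\tfrac12|\widetilde{\Bw_h}\cdot\Bn|\,|\lb\Bv_h\rb|^2$; on the inflow boundary the term $-\langle(\widetilde{\Bw_h}\cdot\Bn)\Bv_h,\Bv_h\rangle_{\Gamma_-}$ combines with the boundary flux to give $\tfrac12|\widetilde{\Bw_h}\cdot\Bn|\,|\Bv_h|^2$ (and likewise on the outflow boundary, where no upwind term is present). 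Collecting the faces yields exactly the stated identity, and nonnegativity is immediate.

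The main obstacle is the exact divergence-free property \eqref{proj_prop1} (and with it \eqref{proj_prop2}). The delicate point is that $B_h(\Bv_h,q)=0$ is posted only against $Q_h$, so the reduction of the summed local identity to $-B_h(\Bv_h,\cdot)$ must be carried out with careful attention to the boundary flux, and the vanishing of the top-degree component of the divergence rests on the construction of $\mathbb{P}$ from \cite{Cesmelioglu2016} rather than on the discrete constraint alone. By contrast, the convective identity is only a careful sign-accounting of inflow versus outflow faces once $\nabla\cdot\widetilde{\Bw_h}=0$ and continuity of the normal trace are in hand, and presents no conceptual difficulty.
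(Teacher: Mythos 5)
The paper offers no proof of this lemma at all --- it defers entirely to \cite{CockburnKanschatSchoetzauNS05} --- so yours is a genuinely self-contained reconstruction, and most of it is sound. The local identity $(\nabla\cdot\mathbb{P}(\Bv_h,\Lb\Bv_h\Rb),q)_K=(\nabla\cdot\Bv_h,q)_K+\langle(\Lb\Bv_h\Rb-\Bv_h)\cdot\Bn,q\rangle_{\partial K}$ for $q\in P_k(K)$ is correct; \eqref{proj_prop5} is immediate from matching the Raviart--Thomas degrees of freedom; the scaling arguments for \eqref{proj_prop3}--\eqref{proj_prop4} are routine as you say; and your face-by-face accounting for the upwind form, via $\tfrac12(|a|^2-|b|^2)+(b-a)\cdot a=-\tfrac12|a-b|^2$, correctly produces the stated expression for $O_h$ \emph{once} $\nabla\cdot\widetilde{\Bw_h}=0$ and the single-valuedness of $\widetilde{\Bw_h}\cdot\Bn$ are in hand.

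The gap is exactly at the point you flag as ``the main obstacle,'' and it is not closed. Since $\mathbb{P}$ maps into $RT_k(K)=P_k(K;\mathbb{R}^3)+\Bx P_k(K)$, its divergence is an elementwise $P_k$ function, and the enrichment $\Bx p$ with $p$ homogeneous of degree $k$ contributes a genuinely degree-$k$ part ($\nabla\cdot(\Bx p)=(3+k)p$). The hypothesis $B_h(\Bv_h,q)=0$ tests only against $Q_h=P_{k-1}(\Ct_h)\cap L_0^2(\Omega)$, so summing your local identity controls only the projection of $\nabla\cdot\mathbb{P}$ onto mean-zero piecewise $P_{k-1}$ functions, and even that only up to the boundary flux $\sum_{F\subset\partial\Omega}\langle q,\Bv_h\cdot\Bn\rangle_F$, which does not vanish under the paper's convention $\Lb\Bv_h\Rb=\Bv_h$ on boundary faces. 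Three pieces of $\nabla\cdot\mathbb{P}$ are therefore untouched: the global constant mode, the component in the orthogonal complement of $P_{k-1}(K)$ inside $P_k(K)$, and the boundary contribution. Your sentence ``the remaining (top-degree) component is controlled by the specific construction of $\mathbb{P}$'' asserts rather than proves the needed cancellation: taking $q\in P_k(K)$ orthogonal to $P_{k-1}(K)$ in your local identity leaves $(\nabla\cdot\mathbb{P},q)_K=\langle(\Lb\Bv_h\Rb-\Bv_h)\cdot\Bn,q\rangle_{\partial K}$, which is generically nonzero for discontinuous $\Bv_h$, so the defining relations \eqref{post_process_op_eq1}--\eqref{post_process_op_eq2} do not by themselves kill the enrichment. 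In the cited reference the argument closes because the divergence constraint is tested elementwise against all of $\nabla\cdot RT_k(K)=P_k(K)$ and the numerical trace on $\partial\Omega$ is the homogeneous Dirichlet datum, which eliminates both the top-degree part and the boundary flux; to repair your proof you must import those two facts (or show the scheme's solution satisfies the stronger elementwise constraint) rather than appeal to the construction of $\mathbb{P}$. Since \eqref{proj_prop2} is deduced from \eqref{proj_prop1}, and the nonnegativity of $O_h$ rests on $\nabla\cdot\widetilde{\Bw_h}=0$, the gap propagates to those claims as well.
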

We refer \cite{CockburnKanschatSchoetzauNS05} for the proof of above result. The next result is from \cite[Proposition~$2.4$]{HSW09}.

\begin{lemma}\label{coarsivity}
With sufficiently large parameters $a_0, m_0 > 0$, for any $\Bu_h, \Bb_h$, we have
\begin{align*}
 A_{h}(\Bu_h, \Bu_h) + M_{h}(\Bb_h, \Bb_h) \ge C \big( \nu \Vert \Bu_h \Vert_{V}^{2} 
+ \kappa \nu_{m} (\Vert \nabla \times \Bb_h\Vert_{L^2(\mathcal{T}_h)}^{2} +  \Vert h^{-\frac12} \lb \Bb_{h}\rb_{T}
\Vert_{L^2(\Cf_h)}^{2} ) \big).
\end{align*}
Here $C$ is independent of the mesh size, $\nu, \nu_m$ and $\kappa$.
\end{lemma}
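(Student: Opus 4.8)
The plan is to use the fact that $A_{h}$ and $M_{h}$ involve disjoint sets of unknowns, so the estimate splits into two independent coercivity bounds,
\[
A_{h}(\Bu_{h},\Bu_{h})\ge C\,\nu\,\Vert\Bu_{h}\Vert_{V}^{2},
\qquad
M_{h}(\Bb_{h},\Bb_{h})\ge C\,\kappa\nu_{m}\big(\Vert\nabla\times\Bb_{h}\Vert_{L^2(\Ct_h)}^{2}+\Vert h^{-\frac12}\lb\Bb_{h}\rb_{T}\Vert_{L^2(\Cf_h)}^{2}\big),
\]
which are then simply added. I would carry out the details for $A_{h}$; the argument for $M_{h}$ is identical after replacing the gradient by the curl, the full jump $\lb\cdot\rb$ by the tangential jump $\lb\cdot\rb_{T}$, and the penalty weight $\nu a_{0}$ by $\kappa\nu_{m}m_{0}$.

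Setting $\Bv=\Bu_{h}$ in $A_{h}$ produces two ``good'' terms, the volume contribution $\nu\Vert\nabla\Bu_{h}\Vert_{L^2(\Ct_h)}^{2}$ and the penalty contribution $\nu a_{0}\Vert h^{-\frac12}\lb\Bu_{h}\rb\Vert_{L^2(\Cf_h)}^{2}$, together with the two consistency (flux) terms, which are the only ones that can carry the wrong sign. Each flux term pairs a face trace of $\nabla\Bu_{h}$ (appearing either as an average or as a jump of the derivative) against $\lb\Bu_{h}\rb$, so on a face $F$ I would bound it by Cauchy--Schwarz followed by the discrete trace (inverse) inequality
\[
\Vert\nabla\Bu_{h}\Vert_{L^2(F)}\le C_{\mathrm{tr}}\,h_{F}^{-\frac12}\Vert\nabla\Bu_{h}\Vert_{L^2(K)},
\]
valid for polynomials on shape-regular simplices. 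This gives, for instance, $|\langle\Lb\nu\nabla\Bu_{h}\Rb,\lb\Bu_{h}\rb\rangle_{F}|\le C_{\mathrm{tr}}\,\nu\,\Vert\nabla\Bu_{h}\Vert_{L^2(\omega_F)}\,\Vert h_{F}^{-\frac12}\lb\Bu_{h}\rb\Vert_{L^2(F)}$, where $\omega_F$ is the union of the one or two elements meeting $F$. Applying Young's inequality with a free parameter $\varepsilon>0$ and summing over faces (using that each element lies in boundedly many patches $\omega_F$) yields
\[
A_{h}(\Bu_{h},\Bu_{h})\ge\nu(1-C_{1}\varepsilon)\Vert\nabla\Bu_{h}\Vert_{L^2(\Ct_h)}^{2}+\nu\big(a_{0}-C_{2}\varepsilon^{-1}\big)\Vert h^{-\frac12}\lb\Bu_{h}\rb\Vert_{L^2(\Cf_h)}^{2}.
\]
Choosing $\varepsilon$ small enough that $1-C_{1}\varepsilon\ge\tfrac12$ and then $a_{0}$ large enough that $a_{0}-C_{2}\varepsilon^{-1}\ge\tfrac{a_{0}}{2}>0$ delivers the lower bound $C\nu\Vert\Bu_{h}\Vert_{V}^{2}$.

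The step I expect to need the most care is not the algebra but the bookkeeping of constants: the lemma requires $C$, and the ``sufficiently large'' thresholds for $a_{0},m_{0}$, to be independent of $\nu,\nu_{m},\kappa$ and $h$. This works out cleanly because the penalty weights already carry the factors $\nu$ and $\kappa\nu_{m}$, so these physical parameters factor out of every term before any inequality is invoked; the constants $C_{1},C_{2}$ (and their curl analogues) originate solely from $C_{\mathrm{tr}}$ and the shape-regularity-controlled overlap of the patches $\omega_F$, hence depend only on $k$ and the mesh-regularity parameter. The one point to verify in the $M_{h}$ computation is that the inverse estimate is applied to $\nabla\times\Bb_{h}$ and that the tangential jump $\lb\Bb_{h}\rb_{T}$ plays the role of the full jump throughout, which is harmless since both consistency terms (with the average and with the jump of the curl) are controlled by the same trace bound on $\nabla\times\Bb_{h}$ and absorbed into the tangential-jump penalty.
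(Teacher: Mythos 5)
Your argument is correct and is exactly the standard interior-penalty coercivity proof (Cauchy--Schwarz on the consistency terms, discrete trace inequality, Young's inequality, then choose $a_{0}$, $m_{0}$ large), which is what the paper relies on: it gives no proof of its own and simply cites \cite[Proposition~2.4]{HSW09}. The only point worth flagging is that you tacitly read the second consistency term of $A_{h}$ in its standard symmetric form $\langle\Lb\nu\nabla\Bv\Rb,\lb\Bu\rb\rangle_{F}$ (the paper's displayed formula has the jump and average brackets swapped there, which as literally written would not be coercive), and that reading is clearly the intended one.
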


\begin{lemma}
\label{lemma_Oh_cont}
There is a constant $C>0$ such that for any $(\Bd, \Bv, \Bc)\in \BC_{h}\times \BV_{h}\times \BC_{h}$ with 
$\Bd$ satisfying (\ref{assumption_div_free}), 
\begin{align*}
C_{h}(\Bd; \Bv, \Bc) \leq C \kappa \Vert \Bd\Vert_{C} \Vert \Bv\Vert_{V} \Vert \Bc\Vert_{C}. 
\end{align*}
\end{lemma}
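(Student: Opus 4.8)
The plan is to estimate the two pieces of $C_h(\Bd;\Bv,\Bc)$ separately: the volume integral $\kappa\sum_{K}(\Bv\times\Bd,\nabla\times\Bc)_K$ and the interior-face integral $-\kappa\sum_{F\in\Cf_h^I}\langle\Lb\Bv\times\Bd\Rb,\lb\Bc\rb_T\rangle_F$. The unifying idea in both is to control the product $\Bv\times\Bd$ in $L^2$ by a H\"older splitting with exponents $6$ and $3$, namely $\|\Bv\times\Bd\|_{L^2(\Omega)}\le\|\Bv\|_{L^6(\Omega)}\|\Bd\|_{L^3(\Omega)}$, and then to invoke two embeddings. For the velocity factor I would use the standard discrete Sobolev embedding for broken polynomial spaces in three dimensions, $\|\Bv\|_{L^6(\Omega)}\le C\|\Bv\|_V$, valid for $\Bv\in\BV_h$ since $\|\cdot\|_V$ is exactly the broken $H^1$-type norm. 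For the magnetic factor, the hypothesis that $\Bd$ satisfies (\ref{assumption_div_free}) is precisely what allows the application of Theorem \ref{theorem_stability}, which yields $\|\Bd\|_{L^3(\Omega)}\le C(\|h^{-1/2}\lb\Bd\rb_T\|_{L^2(\Cf_h)}+\|\nabla\times\Bd\|_{L^2(\Ct_h)})\le C\|\Bd\|_C$. These two facts are the workhorses; everything else is Cauchy--Schwarz and trace estimates.

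For the volume term, Cauchy--Schwarz gives $\sum_K(\Bv\times\Bd,\nabla\times\Bc)_K\le\|\Bv\times\Bd\|_{L^2(\Omega)}\|\nabla\times\Bc\|_{L^2(\Ct_h)}$, and the H\"older split together with the two embeddings above then bounds this by $C\|\Bv\|_V\|\Bd\|_C\|\nabla\times\Bc\|_{L^2(\Ct_h)}\le C\|\Bv\|_V\|\Bd\|_C\|\Bc\|_C$. Multiplying by $\kappa$ produces a term of the desired form, so the volume contribution presents no real difficulty.

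The face term is the main obstacle, since it involves the average $\Lb\Bv\times\Bd\Rb$, which is not directly controlled by any volume norm. The key step is a discrete trace inequality: for a piecewise polynomial $w$ and a face $F\subset\partial K$ one has $\|w\|_{L^2(F)}\le C h_K^{-1/2}\|w\|_{L^2(K)}$. Applying this to $\Bv\times\Bd$ on both elements adjacent to each interior face, and using shape-regularity ($h_F\simeq h_K$) together with the finite overlap of elements across faces, I would obtain $\sum_{F\in\Cf_h^I}h_F\|\Lb\Bv\times\Bd\Rb\|_{L^2(F)}^2\le C\|\Bv\times\Bd\|_{L^2(\Omega)}^2$ with a uniform constant. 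A weighted Cauchy--Schwarz on the face sum then factors each contribution $\langle\Lb\Bv\times\Bd\Rb,\lb\Bc\rb_T\rangle_F$ into $h_F^{1/2}\|\Lb\Bv\times\Bd\Rb\|_{L^2(F)}$ and $h_F^{-1/2}\|\lb\Bc\rb_T\|_{L^2(F)}$; summing and using the trace bound controls the first factor by $C\|\Bv\times\Bd\|_{L^2(\Omega)}$, while the second is exactly $\|h^{-1/2}\lb\Bc\rb_T\|_{L^2(\Cf_h)}\le\|\Bc\|_C$. Invoking once more the H\"older split and the two embeddings bounds the face term by $C\kappa\|\Bv\|_V\|\Bd\|_C\|\Bc\|_C$, and adding the two contributions finishes the proof. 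The only subtlety to keep in mind is that $\Bv\times\Bd$, being a product of degree-$k$ polynomials, is itself a higher-degree piecewise polynomial, so the inverse trace inequality still applies elementwise and the constants merely absorb the resulting polynomial degree.
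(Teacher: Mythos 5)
Your proposal is correct and follows essentially the same route as the paper: both arguments rest on the generalized H\"older split $\Vert\Bv\times\Bd\Vert_{L^2}\le\Vert\Bv\Vert_{L^6}\Vert\Bd\Vert_{L^3}$, the discrete Sobolev embedding $\Vert\Bv\Vert_{L^6(\Omega)}\le C\Vert\Bv\Vert_V$, and Theorem~\ref{theorem_stability} to control $\Vert\Bd\Vert_{L^3(\Omega)}$ by $\Vert\Bd\Vert_C$ under (\ref{assumption_div_free}). The only (immaterial) difference is in the face term, where the paper applies a three-factor H\"older inequality on the faces first and then separate $L^3$/$L^6$ discrete trace inequalities to $\Bd$ and $\Bv$, whereas you apply an $L^2$ trace inequality to the product $\Bv\times\Bd$ and H\"older afterwards; both yield the same bound.
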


\begin{proof}
We recall that 
\begin{align*}
C_{h}(\Bd; \Bv, \Bc) := \Sigma_{K\in\Ct_{h}}  (\kappa(\Bv\times \Bd), \nabla \times \Bc)_{K} 
- \Sigma_{F\in \Cf_{h}^{I}} \langle \kappa \Lb \Bv\times \Bd\Rb, \lb \Bc\rb_{T}\rangle_{F}.
\end{align*}
Obviously, 
\begin{align*}
\Sigma_{K\in\Ct_{h}}  (\kappa(\Bv\times \Bd), \nabla \times \Bc)_{K} 
\leq & C \kappa \Vert \Bd\Vert_{L^{3}(\Omega)} \Vert \Bv\Vert_{L^{6}(\Omega)} \Vert \nabla\times \Bc\Vert_{L^{2}(\Ct_{h})},\\
\Sigma_{F\in \Cf_{h}^{I}} \langle \kappa \Lb \Bv\times \Bd\Rb, \lb \Bc\rb_{T}\rangle_{F} 
\leq & C \kappa \Vert h^{\frac{1}{3}}\Bd\Vert_{L^{3}(\Cf_{h})} \Vert h^{\frac{1}{6}}\Bv\Vert_{L^{6}(\Cf_{h})} 
\Vert h^{-\frac{1}{2}} \lb \Bc\rb_{T}\Vert_{L^{2}(\Cf_{h})}.
\end{align*}
Due to discrete trace inequalities, we get that 
\begin{align*}
\Sigma_{F\in \Cf_{h}^{I}} \langle \kappa \Lb \Bv\times \Bd\Rb, \lb \Bc\rb_{T}\rangle_{F}  
\leq C \kappa \Vert \Bd\Vert_{L^{3}(\Omega)} \Vert \Bv\Vert_{L^{6}(\Omega)} 
\Vert h^{-\frac{1}{2}} \lb \Bc\rb_{T}\Vert_{L^{2}(\Cf_{h})}.
\end{align*}
Thus 
\begin{align*}
C_{h} (\Bd; \Bv, \Bc) \leq C \kappa \Vert d\Vert_{L^{3}(\Omega)} \Vert \Bv\Vert_{L^{6}(\Omega)} \Vert \Bc\Vert_{C}. 
\end{align*}
By (\ref{assumption_div_free}), Theorem~\ref{theorem_stability} and \cite[Theorem~$5.3$]{DiPietroErn}, we can conclude that 
the proof is complete. 
\end{proof}

We denote by $\boldsymbol{\Pi}_{N}$ the N{\'e}d{\'e}lec projection onto $H(\text{curl},\Omega)\cap \BC_{h}$. 
Let $\tilde{\sigma}>\frac{1}{2}$. We define two projections $\boldsymbol{\Pi}_{C}$ and $\boldsymbol{\Pi}_{C^{I}}$ from 
$H^{\tilde{\sigma}}(\text{curl}, \Omega):= \{ \Bc \in H^{\tilde{\sigma}}(\Omega; \mathbb{R}^{3}): 
\nabla\times \Bc \in H^{\tilde{\sigma}}(\Omega; \mathbb{R}^{3}) \}$ 
to $H(\text{curl},\Omega)\cap \BC_{h}$ by 
\begin{align}
\label{def_proj_C}
\boldsymbol{\Pi}_{C} \Bc := \boldsymbol{\Pi}_{N} \Bc - \nabla \phi_{h},
\quad \forall \Bc \in H^{\tilde{\sigma}}(\text{curl}, \Omega), \\
\nonumber
\boldsymbol{\Pi}_{C^{I}} \Bc := \boldsymbol{\Pi}_{N} \Bc - \nabla \tilde{\phi}_{h},
\quad \forall \Bc \in H^{\tilde{\sigma}}(\text{curl}, \Omega),
\end{align}
where $\phi_{h}\in H_{0}^{1}(\Omega)\cap P_{k+1}(\Ct_{h})$ 
and $\tilde{\phi}_{h}\in H^{1}(\Omega)\cap L_{0}^{2}(\Omega)\cap P_{k+1}(\Ct_{h})$ satisfying
\begin{align*}
(\nabla \phi_{h}, \nabla s)_{\Omega} = & (\boldsymbol{\Pi}_{N} \Bc, \nabla s)_{\Omega},
\quad \forall s \in H_{0}^{1}(\Omega)\cap P_{k+1}(\Ct_{h}), \\
(\nabla \tilde{\phi}_{h}, \nabla s)_{\Omega} = & (\boldsymbol{\Pi}_{N} \Bc, \nabla s)_{\Omega},
\quad \forall s \in H^{1}(\Omega)\cap P_{k+1}(\Ct_{h}).
\end{align*}

\begin{lemma}
\label{lemma_proj_C}
Let $\boldsymbol{\Pi}_{C}, \boldsymbol{\Pi}_{C^{I}}$ be the projections defined in (\ref{def_proj_C}). Then for any $\Bc\in H^{\tilde{\sigma}}(\text{curl}, \Omega)$ 
satisfying $\nabla\cdot \Bc = 0$ in $\Omega$, we get that  
\begin{subequations}
\label{proj_C_props}
\begin{align}
\label{proj_C_prop1}
& (\boldsymbol{\Pi}_{C} \Bc, \nabla s)_{\Omega} = (\Bc, \nabla s)_{\Omega},
\quad \forall s \in H_{0}^{1}(\Omega) \cap S_h, \\
\label{proj_C_prop2}
& \Vert \boldsymbol{\Pi}_{C} \Bc - \Bc\Vert_{L^{2}(\Omega)} \leq \Vert \boldsymbol{\Pi}_{N} \Bc - \Bc \Vert_{L^{2}(\Omega)}, \\
\label{proj_C_prop3}
& \Vert \nabla\times (\boldsymbol{\Pi}_{C} \Bc - \Bc)\Vert_{L^{2}(\Omega)} 
= \Vert \nabla\times (\boldsymbol{\Pi}_{N} \Bc - \Bc ) \Vert_{L^{2}(\Omega)}, \\
\label{proj_C_prop4}
& (\boldsymbol{\Pi}_{C^{I}} \Bc, \nabla s)_{\Omega} = 0,\quad \forall s \in H^{1}(\Omega) \cap S_h, \\
\label{proj_C_prop5}
& \Vert \boldsymbol{\Pi}_{C^{I}} \Bc - \Bc\Vert_{L^{2}(\Omega)} 
\leq \Vert \boldsymbol{\Pi}_{N} \Bc - \Bc \Vert_{L^{2}(\Omega)}, \\
\label{proj_C_prop6}
& \Vert \nabla\times (\boldsymbol{\Pi}_{C^{I}} \Bc - \Bc)\Vert_{L^{2}(\Omega)} 
= \Vert \nabla\times (\boldsymbol{\Pi}_{N} \Bc - \Bc ) \Vert_{L^{2}(\Omega)}.
\end{align}
\end{subequations}
\end{lemma}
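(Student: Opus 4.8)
The six identities split naturally into three pairs, of which only the $L^2$-approximation bounds \eqref{proj_C_prop2} and \eqref{proj_C_prop5} require genuine work; the other four are immediate from the definitions, and I would dispose of them first. The curl identities \eqref{proj_C_prop3} and \eqref{proj_C_prop6} follow at once from $\boldsymbol{\Pi}_{C}\Bc - \Bc = (\boldsymbol{\Pi}_{N}\Bc - \Bc) - \nabla\phi_{h}$ together with $\nabla\times\nabla\phi_{h} = 0$ (and likewise for $\tilde{\phi}_h$): taking the curl annihilates the gradient correction, so the two curls coincide exactly.

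Next, the discrete divergence-orthogonality relations \eqref{proj_C_prop1} and \eqref{proj_C_prop4} are obtained by inserting the definitions. Since $S_h = P_{k+1}(\Ct_h)$, the space $H_0^1(\Omega)\cap S_h$ coincides with the test space used to define $\phi_h$, so for every such $s$,
\[
(\boldsymbol{\Pi}_{C}\Bc, \nabla s)_{\Omega} = (\boldsymbol{\Pi}_{N}\Bc, \nabla s)_{\Omega} - (\nabla\phi_h, \nabla s)_{\Omega} = 0
\]
by the defining equation for $\phi_h$; the identity \eqref{proj_C_prop4} follows in exactly the same way with $H^1(\Omega)\cap S_h$ and $\tilde{\phi}_h$.

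The heart of the matter is \eqref{proj_C_prop2}. The plan is to establish the sharper Pythagorean identity $\|\boldsymbol{\Pi}_{C}\Bc - \Bc\|_{L^2(\Omega)}^2 = \|\boldsymbol{\Pi}_{N}\Bc - \Bc\|_{L^2(\Omega)}^2 - \|\nabla\phi_h\|_{L^2(\Omega)}^2$, from which the claimed inequality is immediate. Expanding $\|(\boldsymbol{\Pi}_{N}\Bc - \Bc) - \nabla\phi_h\|_{L^2(\Omega)}^2$, I must show the cross term equals $\|\nabla\phi_h\|_{L^2(\Omega)}^2$. This rests on two observations: choosing $s = \phi_h$ in the definition gives $(\boldsymbol{\Pi}_{N}\Bc, \nabla\phi_h)_\Omega = \|\nabla\phi_h\|_{L^2(\Omega)}^2$, while $(\Bc, \nabla\phi_h)_\Omega = 0$ by integration by parts, using $\nabla\cdot\Bc = 0$ in $\Omega$ and $\phi_h\in H_0^1(\Omega)$ so that the boundary contribution drops out. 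Combining the two gives $(\boldsymbol{\Pi}_{N}\Bc - \Bc, \nabla\phi_h)_\Omega = \|\nabla\phi_h\|_{L^2(\Omega)}^2$, and the Pythagorean identity follows.

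For \eqref{proj_C_prop5} the same computation applies verbatim with $\tilde{\phi}_h$ in place of $\phi_h$, and here lies the only delicate point. Since $\tilde{\phi}_h\in H^1(\Omega)$ rather than $H_0^1(\Omega)$, integrating $(\Bc, \nabla\tilde{\phi}_h)_\Omega$ by parts leaves the boundary contribution $\langle\Bc\cdot\Bn, \tilde{\phi}_h\rangle_{\partial\Omega}$, which need not vanish for an arbitrary divergence-free field. I expect this to be the step requiring care: the term vanishes precisely when $\Bc\cdot\Bn = 0$ on $\partial\Omega$, which is exactly the second-type constraint \eqref{mhd_eq9} satisfied by the magnetic field to which $\boldsymbol{\Pi}_{C^{I}}$ is applied in the analysis. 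Once $(\Bc, \nabla\tilde{\phi}_h)_\Omega = 0$ is secured, the identical Pythagorean argument yields \eqref{proj_C_prop5}, completing the proof.
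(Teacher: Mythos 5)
Your proof is correct and follows essentially the same route as the paper: the curl identities and discrete orthogonality relations are read off from the definitions, and \eqref{proj_C_prop2} comes from the orthogonal (Pythagorean) decomposition $\Vert \boldsymbol{\Pi}_{N}\Bc - \Bc\Vert_{L^{2}(\Omega)}^{2} = \Vert \nabla\phi_{h}\Vert_{L^{2}(\Omega)}^{2} + \Vert \boldsymbol{\Pi}_{C}\Bc - \Bc\Vert_{L^{2}(\Omega)}^{2}$, which is exactly the paper's argument. Your remark on \eqref{proj_C_prop5} is a genuine and worthwhile refinement: the paper merely says the proof is ``similar,'' but as you note, $(\Bc, \nabla\tilde{\phi}_{h})_{\Omega}=0$ requires the boundary term $\langle \Bc\cdot\Bn, \tilde{\phi}_{h}\rangle_{\partial\Omega}$ to vanish, i.e.\ the additional hypothesis $\Bc\cdot\Bn = 0$ on $\partial\Omega$, which is not in the lemma's stated assumptions but does hold for every field to which $\boldsymbol{\Pi}_{C^{I}}$ is actually applied (the magnetic field under the second type of boundary conditions \eqref{mhd_eq9}).
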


\begin{proof}
It is easy to check that (\ref{proj_C_prop1}), (\ref{proj_C_prop3}), (\ref{proj_C_prop4}) 
and (\ref{proj_C_prop6}) hold. 

Since $\nabla\cdot \Bc = 0$, then $(\Bc, \nabla \phi_{h}) = 0$ where $\phi_{h}$ is introduced in (\ref{def_proj_C}). 
Then we get that $(\boldsymbol{\Pi}_{C} \Bc - \Bc, \nabla \phi_{h}) = 0$. Thus 
\begin{align*}
\Vert \boldsymbol{\Pi}_{N}\Bc - \Bc\Vert_{L^{2}(\Omega)}^{2} = \Vert \nabla \phi_{h} \Vert_{L^{2}(\Omega)}^{2} 
+ \Vert \boldsymbol{\Pi}_{C}\Bc - \Bc\Vert_{L^{2}(\Omega)}^{2},
\end{align*}
which implies (\ref{proj_C_prop2}) immediately. The proof of (\ref{proj_C_prop5}) 
is similar to that of (\ref{proj_C_prop2}). 
\end{proof}

\section{Proof of $L^{3}$-norm control of discrete magnetic field: Homogeneous tangential components boundary condition}
\label{proof_L3}
In this section, we prove Theorem~\ref{theorem_stability} which provides 
$L^{3}$-norm control of discrete magnetic field $\Bb_{h}$.
We begin by the following result: 

\begin{lemma}
\label{lemma_conforming_stability}
There is a positive constant $C$ such that for any $\tilde{\Bb}_{h} \in \BC_h\cap H_{0}(\text{curl}, \Omega)$, if 
\begin{align}
\label{weak_div_free_conforming}
(\tilde{\Bb}_{h}, \nabla s)_{\Omega}=0 \qquad \forall s \in H_{0}^{1}(\Omega)\cap P_{k+1}(\Ct_{h}),
\end{align}
then 
\begin{align*}
\Vert \tilde{\Bb}_{h}\Vert_{L^{3}(\Omega)} \leq C \Vert \nabla \times \tilde{\Bb}_{h} \Vert_{L^{2}(\Omega)}. 
\end{align*}
\end{lemma}
\begin{proof}
{\red{ We first recall the inverse inequality for discrete functions {\blue{\cite[Lemma 4.5.3~]{BrennerScott08}}}. For any $K \in \Ct_h$ and $v \in P_k(K)$, we have 
\begin{equation}\label{scaling_ineq}
\|v\|_{L^p(K)} \le C h_K^{\frac{3}{p} - \frac{3}{q}} \|v\|_{L^q(K)},
\end{equation}
for all $1 \le p,q \le \infty$, here $C$ is independent of $h_K$}}. 

We define {\red{the Hodge mapping $\Cp \tilde{\Bb}_{h} \in H_0({\rm curl},\Omega)$ \cite[(4.8)]{Hiptmair02} satisfying}}
\begin{equation}
\label{hodge_Pvh}
\nabla \times \Cp \tilde{\Bb}_{h}  = \nabla \times  \tilde{\Bb}_{h} , \quad \nabla \cdot \Cp \tilde{\Bb}_{h} = 0  \qquad \quad  \ \   {\rm in} \  \Omega.
\end{equation}

According to \cite[Theorem~$4.1$]{Hiptmair02}, there is $\delta\in (0,\frac{1}{2}]$ such that 
\begin{align*}
\Vert \Cp \tilde{\Bb}_{h}\Vert_{H^{\frac12+\delta}(\Omega)} \leq C \Vert \nabla\times \tilde{\Bb}_{h} \Vert_{L^{2}(\Omega)}. 
\end{align*}
According to \cite[Lemma~$4.5$]{Hiptmair02}, 
\begin{align*}
\Vert \tilde{\Bb}_{h} - \Cp \tilde{\Bb}_{h}\Vert_{L^{2}(\Omega)} 
\leq C h^{\frac12+\delta} \Vert \nabla \times \tilde{\Bb}_{h} \Vert_{L^{2}(\Omega)}. 
\end{align*}

We denote by $\boldsymbol{\Pi}_{V}$ the $L^{2}$-orthogonal projection onto $\BV_h = P_{k}(\Ct_{h};\mathbb{R}^{3})$. 
Since $\tilde{\Bb}_{h} \in P_{k}(\Ct_{h};\mathbb{R}^{3})$, then $\boldsymbol{\Pi}_{h} \tilde{\Bb}_{h} = \tilde{\Bb}_{h}$. So, we have that 
\begin{align*}
\Vert \tilde{\Bb}_{h}\Vert_{L^{3}(\Omega)} = \Vert \boldsymbol{\Pi}_{V} \tilde{\Bb}_{h}\Vert_{L^{3}(\Omega)} 
\leq \Vert \boldsymbol{\Pi}_{V} (\tilde{\Bb}_{h} - \Cp \tilde{\Bb}_{h})\Vert_{L^{3}(\Omega)} 
+ \Vert \boldsymbol{\Pi}_{V} (\Cp \tilde{\Bb}_{h})\Vert_{L^{3}(\Omega)}. 
\end{align*}
By \eqref{scaling_ineq}, we have that 
\begin{align*}
 \Vert \boldsymbol{\Pi}_{V} (\tilde{\Bb}_{h} - \Cp \tilde{\Bb}_{h})\Vert_{L^{3}(\Omega)}  
\leq & C h^{-\frac12} \Vert \boldsymbol{\Pi}_{V} (\tilde{\Bb}_{h} - \Cp \tilde{\Bb}_{h})\Vert_{L^{2}(\Omega)} 
\leq  C h^{-\frac12}\Vert \tilde{\Bb}_{h} - \Cp \tilde{\Bb}_{h} \Vert_{L^{2}(\Omega)} \\
\leq & C h^{\delta} \Vert \nabla \times \tilde{\Bb}_{h} \Vert_{L^{2}(\Omega)}. 
\end{align*}
{{On each $K \in \Ct_h$ for any $\Bv \in L^3(K)$, By \eqref{scaling_ineq} we have
\begin{equation*}
\Vert \boldsymbol{\Pi}_{V} \Bv\Vert_{L^{3}(K)} \leq C h_K^{-\frac12} \Vert \boldsymbol{\Pi}_{V} \Bv\Vert_{L^{2}(K)} \le C  h_K^{-\frac12} \Vert \Bv \Vert_{L^{2}(K)} \le C \Vert \Bv\Vert_{L^{3}(K)}, 
\end{equation*}
the last step in the above estimate is due to the Sobolev inequality. Consequently, we have
\begin{equation}\label{L3_estimate}
\|\boldsymbol{\Pi}_V \Bv\|_{L^3(\Omega)} \le C \|\Bv\|_{L^3(\Omega)} \quad \forall \Bv \in L^3(\Omega).
\end{equation}
}}
Thus we have that 
\begin{align*}
\Vert \boldsymbol{\Pi}_{V} (\Cp \tilde{\Bb}_{h})\Vert_{L^{3}(\Omega)} 
\leq C \Vert \Cp \tilde{\Bb}_{h} \Vert_{L^{3}(\Omega)} \leq C \Vert \Cp \tilde{\Bb}_{h} \Vert_{H^{\frac12+\delta}(\Omega)} 
\leq C \Vert \nabla \times \tilde{\Bb}_{h} \Vert_{L^{2}(\Omega)}.
\end{align*}
{{Combining above estimates we have that}}
\begin{align*}
\Vert \tilde{\Bb}_{h}\Vert_{L^{3}(\Omega)} \leq C \Vert \nabla \times \tilde{\Bb}_{h} \Vert_{L^{2}(\Omega)}. 
\end{align*}
\end{proof}
{\blue{Next we present an intermediate result:}} 
{\blue{
\begin{lemma}
\label{lemma_div0_stability}
There is a positive constant $C$ such that for any $\Bb_h\in \BC_h$ with $\nabla_h \cdot \Bb_h = 0$, we have  
\begin{align*}
\|\Bb_h \|_{\BL^3(\Omega)} \leq C \big( \|  h^{-\frac12}\llbracket\Bb_h \rrbracket_{T} \|_{L^2(\Cf_h)} 
+ \|\nabla \times \Bb_h \|_{L^2(\mathcal{T}_{h})}\big).
\end{align*}
\end{lemma}
}}
\begin{proof}
Due to \cite[Proposition~$4.5$]{Houston}, there is $\tilde{\Bb}_h \in \BC_h\cap H_{0}(\text{curl}, \Omega)$ such that
\begin{subequations}
\label{interpolation_ineqs}
\begin{align}
\label{interpolation_ineq1}
\|\Bb_h - \tilde{\Bb}_h\|_{L^2(\Omega)}  \leq & C{{\| h^{\frac12} \llbracket\Bb_h \rrbracket_T \|_{L^2(\Cf_h)}}}, \\
\label{interpolation_ineq2}
\|\nabla\times (\Bb_h - \tilde{\Bb}_h)\|_{L^2(\Ct_{h})}  \leq & C {{\| h^{-\frac12} \llbracket\Bb_h  \rrbracket_T \|_{L^2(\Cf_h)}}}.
\end{align}
\end{subequations}

According to \eqref{scaling_ineq} and (\ref{interpolation_ineq1}), we have that 
\begin{align}
\label{stab_ineq1}
\Vert \Bb_{h} -  \tilde{\Bb}_h \Vert_{L^{3}(\Omega)} \leq C {{\| h^{-\frac12} \llbracket\Bb_h \rrbracket_T \|_{L^2(\Cf_h)}}}. 
\end{align}

We define $\sigma_{h}\in H_{0}^{1}(\Omega)\cap P_{k+1}(\Ct_{h})$ by 
\begin{align*}
(\nabla \sigma_{h}, \nabla s)_{\Omega} = (\tilde{\Bb}_h, \nabla s)_{\Omega}\qquad \forall s \in H_{0}^{1}(\Omega)
\cap  P_{k+1}(\Ct_{h}).
\end{align*}
Due to {\blue{\eqref{discrete_div}, the assumption $\nabla_h \cdot \Bb_h = 0$}} and (\ref{interpolation_ineq1}), 
\begin{align*}
& (\nabla \sigma_{h}, \nabla \sigma_{h})_{\Omega} = (\tilde{\Bb}_h, \nabla \sigma_{h})_{\Omega} 
= (\tilde{\Bb}_h - \Bb_{h}, \nabla \sigma_{h})_{\Omega}\qquad \\
\leq & \Vert \Bb_h - \tilde{\Bb}_h \Vert_{L^{2}(\Omega)} \Vert \nabla \sigma_{h}\Vert_{L^{2}(\Omega)}
\leq C {{\| h^{\frac12} \llbracket\Bb_h \rrbracket_T \|_{L^2(\Cf_h)}}}\Vert \nabla \sigma_{h}\Vert_{L^{2}(\Omega)}.
\end{align*}
By \eqref{scaling_ineq}, we have that 
\begin{align}
\label{stab_ineq2}
\Vert \nabla \sigma_{h}\Vert_{L^{3}(\Omega)} \leq C h^{-\frac12} \Vert \nabla \sigma_{h}\Vert_{L^{2}(\Omega)} 
\leq C  {{\| h^{-\frac12} \llbracket\Bb_h  \rrbracket_T \|_{L^2(\Cf_h)}}}.
\end{align}
{{Next, by the definition of $\widetilde{\Bb_h}, \sigma_h$ we notice that}} 
\begin{align*}
& \tilde{\Bb}_h - \nabla \sigma_{h} \in \BC_h\cap H_{0}(\text{curl}, \Omega), \\
& (\tilde{\Bb}_h - \nabla \sigma_{h}, \nabla s)_{\Omega} = 0 \quad \forall s\in H_{0}^{1}(\Omega)\cap P_{k+1}(\Ct_{h}).
\end{align*}
Applying Lemma~\ref{lemma_conforming_stability} to $\tilde{\Bb}_h - \nabla \sigma_{h}$, we have that 
\begin{align}
\label{stab_ineq3}
& \Vert \tilde{\Bb}_h - \nabla \sigma_{h}\Vert_{L^{3}(\Omega)} \leq C 
\Vert \nabla\times (\tilde{\Bb}_h - \nabla \sigma_{h}) \Vert_{L^{2}(\Omega)} 
= C \Vert \nabla\times \tilde{\Bb}_h \Vert_{L^{2}(\Omega)}\\
\nonumber
\leq & C \big( \Vert \nabla\times (\tilde{\Bb}_h - \Bb_{h} ) \Vert_{L^{2}(\Ct_{h})} 
+ \Vert \nabla\times \Bb_{h}\Vert_{L^{2}(\Ct_{h})}\big) \\
\nonumber 
\leq &  {{C ( \|  h^{-\frac12}\llbracket\Bb_h\rrbracket_T \|_{L^2(\Cf_h)}
+ \|\nabla \times \Bb_h \|_{L^2(\mathcal{T}_{h})}).}}
\end{align}
The last inequality above is due to (\ref{interpolation_ineq2}). {{Finally the proof is complete by combining \eqref{stab_ineq1}, \eqref{stab_ineq2}, \eqref{stab_ineq3},}}
\begin{align*}
\|\Bb_h \|_{\BL^3(\Omega)} \leq C \big( {{\|  h^{-\frac12}\llbracket\Bb_h \rrbracket_T \|_{L^2(\Cf_h)} }}
+ \|\nabla \times \Bb_h \|_{L^2(\mathcal{T}_{h})}\big).
\end{align*}
\end{proof}

Now we are ready to prove Theorem~\ref{theorem_stability}. 

\begin{proof}(of Theorem \ref{theorem_stability})
Given $\Bb_h \in C_h$, notice that $\nabla_h \cdot \Bb_h \in H^1_0(\Omega) \cap S_h \in L^2(\Omega)$. We consider the auxiliary Poisson equation: Find $\phi$ satisfying:
\begin{subequations}\label{poisson}
\begin{align}
-\Delta \phi &= \nabla_h \cdot \Bb_h \quad \text{in $\Omega$}, \\
\phi &= 0 \quad \text{on $\partial \Omega$}
\end{align}
\end{subequations}
On a polygonal domain $\Omega$, we have the regularity result \cite{Hua1989}: there exists $\delta_0 > 0$ such that
\begin{equation}\label{poisson_reg}
\|\phi\|_{H^{\frac32 + \delta_0}(\Omega)} \le C \|\nabla_h \cdot \Bb_h\|_{L^2(\Omega)}.
\end{equation}
Let $\phi_h$ be the numerical solution of \eqref{poisson} in the Lagrange space $H^1_0(\Omega) \cap S_h$. i.e. It solves the system:
\begin{equation}\label{CG_phi}
\bint{\nabla \phi_h}{\nabla w} = \bint{\nabla_h \cdot \Bb_h}{w} \quad \text{for all $w \in H^1_0(\Omega) \cap S_h$}.
\end{equation}
This implies that that $\Bb_h - \nabla \phi_h \in C_h$ and $\nabla_h \cdot (\Bb_h - \nabla \phi_h) = 0$. By Lemma \ref{lemma_div0_stability}, we have
\begin{align}\label{bridge_L3}
\|\Bb_h - \nabla \phi_h\|_{L^3(\Omega)} &\le C ( \| \llbracket \Bb_h - \nabla \phi_h \rrbracket_T\|_{L^2(\Cf_h)} + \|\nabla \times (\Bb_h - \phi_h)\|_{L^2(\Ct_h)}) \\
\nonumber
& = C ( \| \llbracket \Bb_h \rrbracket_T\|_{L^2(\Cf_h)} + \|\nabla \times \Bb_h\|_{L^2(\Ct_h)}).
\end{align}
The last step is due the fact that $\nabla \phi_h \in C_h \cap H_0(\text{curl}; \Omega)$. Next we present a bound for $\|\nabla \phi_h\|_{L^3(\Omega)}$.
To this end, by triangle inequality, we have
\begin{align*}
\|\nabla \phi_h\|_{L^3(\Omega)} & = \|\Pi_V (\nabla \phi_h)\|_{L^3(\Omega)}  \le \| \Pi_V(\nabla \phi)\|_{L^3(\Omega)} + \|\Pi_V (\nabla (\phi - \phi_h))\|_{L^3(\Omega)},  \\
 \intertext{by \eqref{scaling_ineq} and \eqref{L3_estimate}, we further have}
 \|\nabla \phi_h\|_{L^3(\Omega)} & \le C (\| \nabla \phi \|_{L^3(\Omega)} + h^{-\frac12} \|\Pi_V (\nabla (\phi - \phi_h))\|_{L^2(\Omega)}) \\
& \le C (\| \nabla \phi \|_{L^3(\Omega)} + h^{-\frac12} \|\nabla (\phi - \phi_h)\|_{L^2(\Omega)}) \\
 & \le C (\|\phi\|_{H^{\frac32 + \delta_0}(\Omega)} + h^{\delta_0} \|\phi\|_{H^{\frac32 + \delta_0}(\Omega)}) \\
 &\le C \|\phi\|_{H^{\frac32 + \delta_0}(\Omega)} \le C \|\nabla_h \cdot \Bb_h\|_{L^2(\Omega)}.
\end{align*}
Here we used the approximation property of $\phi_h$ and the regularity property \eqref{poisson_reg}. Finally, the proof is complete by combining above estimate with \eqref{bridge_L3}.
\end{proof}

\section{Proof of $L^{3}$-norm control of discrete magnetic field: homogeneous normal component boundary condition} 
\label{sec_l3_type2}

In this section, we will present detailed proof for Theorem \ref{theorem_stability2} which plays a crucial role for the analysis for the second type of boundary condition. 
{\blue{Analogous to the proof of Theorem \ref{theorem_stability}. We first derive the estimate for functions in $\BC_h$ which are discretely divergence free:
\begin{lemma}
\label{L3_ineq_N}
There is a positive constant $C$ such that for any $\Bb_h\in \BC_h$ with $\nabla^N_h \cdot \Bb_h = 0$, we have  
\begin{align*}
\|\Bb_h \|_{\BL^3(\Omega)} \leq C \big( \|  h^{-\frac12}\llbracket\Bb_h \rrbracket_{T} \|_{L^2(\Cf^I_h)} 
+ \|\nabla \times \Bb_h \|_{L^2(\mathcal{T}_{h})}\big).
\end{align*}
\end{lemma}
}}

Similar as for the Lemma \ref{lemma_div0_stability}, 
we begin by the following Lemma~\ref{lemma_conforming_stability2}, which is similar to \cite[Lemma~$3.6$]{BLi1}. 
{\red{We}} provide the proof of Lemma~\ref{lemma_conforming_stability2} 
in Appendix~\ref{appendix1}.

\begin{lemma}
\label{lemma_conforming_stability2}
There is a positive constant $C$ such that for any $\tilde{\Bb}_{h} \in \BC_h\cap H(\text{curl}, \Omega)$, if 
\begin{align}
\label{weak_div_free_conforming2}
(\tilde{\Bb}_{h}, \nabla s)_{\Omega}=0 \qquad \forall s \in H^{1}(\Omega)\cap P_{k+1}(\Ct_{h}),
\end{align}
then 
\begin{align*}
\Vert \tilde{\Bb}_{h}\Vert_{L^{3}(\Omega)} \leq C \Vert \nabla \times \tilde{\Bb}_{h} \Vert_{L^{2}(\Omega)}. 
\end{align*}
\end{lemma}

We also need Lemma~\ref{lemma_curl_interpolation}, which is similar to \cite[Proposition~$4.5$]{Houston}. 
The proof is presented in Appendix B.

\begin{lemma}
\label{lemma_curl_interpolation}
There is a positive constant $C$ such that for any $\Bb_{h} \in \BC_{h}$, there is 
$\tilde{\Bb}_{h}\in H(\text{curl},\Omega)\cap \BC_{h}$ satisfying 
\begin{align*}
\Vert \Bb_{h} - \tilde{\Bb}_{h} \Vert_{L^{2}(\Omega)} \leq C \|  h^{\frac12}\llbracket\Bb_h \rrbracket_{T} \|_{L^2(\Cf_h^{I})}. 
\end{align*}
\end{lemma}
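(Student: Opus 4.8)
The plan is to build $\tilde{\Bb}_h$ through an averaging operator of Oswald / Karakashian--Pascal type, exactly as in \cite[Proposition~4.5]{Houston}, the only change being the treatment of boundary entities. I would first fix a unisolvent set of degrees of freedom for the $H(\mathrm{curl})$-conforming subspace $\BC_h \cap H(\mathrm{curl},\Omega)$, which (since $\BC_h = P_k(\Ct_h;\mathbb{R}^3)$ consists of full polynomials) is the second-kind N\'ed\'elec space of degree $k$: tangential moments on edges, tangential moments on faces, and interior moments on each tetrahedron. These are chosen so that two piecewise polynomials match tangentially across an interior face precisely when the degrees of freedom attached to that face and to its edges agree. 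I would then define $\tilde{\Bb}_h \in \BC_h \cap H(\mathrm{curl},\Omega)$ by prescribing its degrees of freedom as follows: for a degree of freedom attached to an interior edge or interior face, take the \emph{average} of the values produced by the elements sharing that entity; for a degree of freedom attached to a boundary edge or boundary face, simply \emph{copy} the value from the unique element containing it; and for an interior (volume) degree of freedom, copy the single local value. The contrast with the $H_0(\mathrm{curl})$ construction of \cite{Houston} is precisely that the boundary degrees of freedom are copied rather than zeroed, which is what removes the boundary faces from the right-hand side and leaves only $\Cf_h^I$.

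By construction the degrees of freedom of $\tilde{\Bb}_h$ on each interior edge and interior face take a single common value seen from both sides, so the tangential trace of $\tilde{\Bb}_h$ is single-valued across every interior face and hence $\tilde{\Bb}_h \in H(\mathrm{curl},\Omega)$. No tangential condition is imposed on $\partial\Omega$, which is the feature distinguishing the second type of boundary condition from the first.

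For the error bound I would work element by element. On a fixed $K \in \Ct_h$ the difference $\Bb_h - \tilde{\Bb}_h$ lies in $P_k(K;\mathbb{R}^3)$, and each of its degrees of freedom equals the difference between the local value on $K$ and the corresponding averaged value. For an interior face $F$ of $K$ this difference is a fixed linear combination of the tangential jump $\lb \Bb_h\rb_T$ on $F$, whereas for a boundary entity of $K$ it vanishes, since there the degree of freedom was copied. Passing to a reference element, using equivalence of norms on the finite-dimensional space $P_k$ together with the trace scaling $\|\cdot\|_{L^2(F)} \simeq h_F^{-1/2}\|\cdot\|_{L^2(K)}$, I would obtain the local estimate
\begin{align*}
\|\Bb_h - \tilde{\Bb}_h\|_{L^2(K)}^2 \leq C \sum_{F \in \Cf_h^I,\; F\subset \partial K} h_F \, \|\lb \Bb_h\rb_T\|_{L^2(F)}^2 .
\end{align*}
Summing over $K \in \Ct_h$ and using that shape-regularity bounds the number of elements meeting any given face, a finite-overlap argument yields
\begin{align*}
\|\Bb_h - \tilde{\Bb}_h\|_{L^2(\Omega)} \leq C \, \| h^{1/2}\lb \Bb_h\rb_T\|_{L^2(\Cf_h^I)},
\end{align*}
which is the claim.

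The main obstacle is the local step in three dimensions, specifically the edge degrees of freedom: an interior edge is shared by a mesh-dependent number of tetrahedra, yet the difference of an edge tangential moment from its average must still be controlled by \emph{face} tangential jumps, since only faces appear in the target norm. As in \cite{Houston} I would handle this by a telescoping argument that connects the values on two elements sharing the edge through the ring of intermediate elements around it, bounding each consecutive difference by the tangential jump across the interior face the two elements share, and then invoking a trace inequality to pass from the one-dimensional edge moment to the two-dimensional face quantity. Verifying that boundary edges and faces contribute nothing---because their degrees of freedom are copied verbatim---is routine once the averaging operator is set up, and is the one place where the argument genuinely departs from the homogeneous-tangential case underlying Theorem~\ref{theorem_stability}.
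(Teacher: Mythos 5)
Your construction is the same Oswald-type averaging of degrees of freedom used in the paper's Appendix B (itself modelled on \cite[Proposition~4.5]{Houston}): average the edge and face degrees of freedom, keep the volume ones, observe that conformity follows because the shared degrees of freedom become single-valued, bound the local error by the differences of the degrees of freedom from their averages, control the edge differences by telescoping around the ring of elements sharing the edge, and sum using shape-regularity. The whole point of the lemma --- that boundary faces drop out of the right-hand side because no tangential condition is imposed on $\partial\Omega$ --- is correctly identified.

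One detail in your write-up is wrong as stated, though your own machinery repairs it. A \emph{boundary edge} of a tetrahedral mesh is shared by several elements, so ``copy the value from the unique element containing it'' is ill-defined there, and the claim that the degree-of-freedom difference ``vanishes'' for every boundary entity of $K$ is false for boundary edges: if you copy from one element of the fan, the other elements sharing that edge still see a nonzero difference. Consequently your displayed local estimate, which sums only over interior faces $F\subset\partial K$, cannot hold for elements meeting $\partial\Omega$ along an edge; the edge contributions genuinely require jumps across interior faces in the edge patch of $K$ that are not faces of $K$ itself (this is also how the paper states its local bound). The paper's fix is simply to \emph{average} over all elements sharing the edge, boundary or not, and to note that the fan around a boundary edge is still connected through interior faces, so the telescoping you describe bounds every difference $b^i_{K,e}-\tilde b^i_{K,e}$ by tangential jumps over faces in $\Cf_h^I$ only (boundary faces contribute nothing since the two elements adjacent to such a ``face'' coincide). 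With that adjustment, and with the local estimate restated over the edge patch rather than over $\partial K$, your argument matches the paper's proof.
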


With above two lemmas, the proof of Lemma \ref{L3_ineq_N} is almost the same as that of 
Lemma~\ref{lemma_div0_stability}. We only need to use 
Lemma~\ref{lemma_conforming_stability2} and Lemma~\ref{lemma_curl_interpolation} to replace 
Lemma~\ref{lemma_conforming_stability} and \cite[Proposition~$4.5$]{Houston} in the proof of 
Theorem~\ref{theorem_stability}. {\blue{Finally, the proof of Theorem \ref{theorem_stability2} can be obtained by replacing 
the auxillary Poisson problem \eqref{poisson} with homogeneous Neumann boundary condition 
\begin{subequations}
\label{poisson_neumann}
\begin{align}
-\Delta \phi &= \nabla_{h}^{N} \cdot \Bb_h \quad \text{in $\Omega$}, \\
\nabla\phi \cdot \Bn &= 0 \quad \text{on $\partial \Omega$}
\end{align}
\end{subequations}
in the proofs in the Theorem \ref{theorem_stability}.}} 
According to \cite[Corollary $3.9$]{Dauge1}, there exists $\delta_{0}>0$ such that 
\begin{equation}\label{poisson_reg_neumann}
\|\phi\|_{H^{\frac32 + \delta_0}(\Omega)} \le C \|\nabla_{h}^{N} \cdot \Bb_h\|_{L^2(\Omega)}.
\end{equation}
So, the proof of Theorem \ref{theorem_stability2} can be carried out in the same way as 
that of Theorem \ref{theorem_stability}.

\section{Proof of the existence, uniqueness and boundedness of the approximate solution}

In this section, we prove Theorem~\ref{theorem_wellposed} on the existence, uniqueness and boundedness of 
the approximate solution of the DG method. We skip the proof of Theorem~\ref{theorem_wellposed2} since 
it is almost the same as that of Theorem~\ref{theorem_wellposed} (we only need to use Theorem~\ref{theorem_stability2} 
to replace Theorem~\ref{theorem_stability}).  
The counterpart for the exact solution was provided in \cite{GreifLi2010}. 
We first define a mapping $\mathcal{F}$ on 
\begin{align}
\label{def_Zh}
\BZ_{h} := \{ (\Bv, \Bc)\in \BV_{h} \times \BC_{h}:  B_{h}(\Bv, q) = D_{h} (\Bc, s) = 0, 
\quad \forall (q, s) \in Q_{h}\times (H_{0}^{1}(\Omega) \cap S_{h})  \},
\end{align}
We will show that the mapping is a contraction on a subset of $\BZ_h$ and apply the Brower fixed point theorem for 
the existence of the solution. Finally the uniqueness follows easily.

\subsection*{Step 1: Definition of the operator $\mathcal{F}$} 
We start by defining $\mathcal{F}$. For $(\Bbeta_{h}, \Bd_{h})\in \BZ_{h}$, 
we take $\mathcal{F}(\Bbeta_{h}, \Bd_{h})$ to be the component $(\Bu_{h}, \Bb_{h})$ of the solution 
$(\Bu_{h}, \Bb_{h}, p_{h}, r_{h})\in \BV_{h}\times \BC_{h}\times Q_{h}\times S_{h}$ of 
 \begin{align}
 \label{DG_linear_mhd}
 A_{h}(\Bu_{h}, \Bv) + O_{h}(\mathbb{P}(\Bbeta_{h}, \Lb \Bbeta_{h} \Rb ); \Bu_{h}, \Bv) 
 + C_{h}(\Bd_{h}; \Bv, \Bb_{h}) + B_{h}(\Bv, p_{h}) & = (\Bf, \Bv)_{\Omega}, \\
 \nonumber
M_{h}(\Bb_{h}, \Bc) - C_{h}(\Bd_{h}; \Bu_{h}, \Bc) + D_{h} (\Bc, r_{h}) & = (\Bg, \Bc)_{\Omega}, \\
\nonumber
B_{h} (\Bu_{h}, q) &= 0,\\
\nonumber
D_{h}(\Bb_{h}, s) - J_{h}(r_{h}, s) &= 0,
 \end{align}
for all $(\Bv, \Bc, q, s)\in \BV_{h}\times \BC_{h}\times Q_{h}\times S_{h}$. The above system is the original mixed DG scheme for the linearized MHD equations in \cite{HSW09}, we refer \cite{HSW09} for the existence and uniqueness of the solutions.

\subsection*{Step 2: Proof of the upper bound of the approximate solution} 
Next, we establish the boundedness result of the mapping $\mathcal{F}$. 
We take $\Bv = \Bu_{h}$, $\Bc = \Bb_{h}$, $q = - p_{h}$ and $s = - r_{h}$ in (\ref{DG_linear_mhd}). 
We have that 
\begin{align*}
A_{h} (\Bu_{h}, \Bu_{h}) + O_{h}(\mathbb{P}(\Bbeta_{h}, \Lb \Bbeta_{h} \Rb); \Bu_{h}, \Bu_{h}) 
+ M_{h}(\Bb_{h}, \Bb_{h}) + J_{h} (r_{h}, r_{h}) = (\Bf, \Bu_{h})_{\Omega} + (\Bg, \Bb_{h})_{\Omega}.  
\end{align*}
By Lemma \ref{coarsivity} and Lemma \ref{lemma_proj},  
\begin{align*}
\nu \Vert \Bu_{h}\Vert_{V}^{2} + \kappa \nu_{m} \big( \Vert \nabla \times \Bb_{h}\Vert_{\mathcal{T}_h}^{2}
+   \Vert h^{-\frac12} \lb \Bb_{h}\rb_{T}\Vert_{L^{2}(\Cf_h)}^{2}\big) 
\leq  C \big( (\Bf, \Bu_{h})_{\Omega} + (\Bg, \Bb_{h})_{\Omega} \big). 
\end{align*}
We notice that $D_{h}(\Bb_{h}, s) = 0$ for any $s \in S_{h}$. Then by Theorem~\ref{theorem_stability}, 
\begin{align*}
\nu \Vert \Bu_{h}\Vert_{V}^{2} + \kappa \nu_{m} \big(\Vert \Bb_{h}\Vert_{L^{3}(\Omega)}^{2} + \Vert \Bb_{h}\Vert_{C}^{2} \big)
\leq  C \big( (\Bf, \Bu_{h})_{\Omega} + (\Bg, \Bb_{h})_{\Omega} \big). 
\end{align*}
As a sequence, we get that 
\begin{align*}
\nu^{\frac{1}{2}} \Vert \Bu_{h}\Vert_{V} + \kappa^{\frac{1}{2}} \nu_{m}^{\frac{1}{2}}
\big(\Vert \Bb_{h}\Vert_{L^{3}(\Omega)}+ \Vert \Bb_{h}\Vert_{C}\big)
\leq C \big( \nu^{-\frac{1}{2}} \Vert \Bf\Vert_{L^{2}(\Omega)}
 + \kappa^{-\frac{1}{2}}\nu_{m}^{-\frac{1}{2}} \Vert \Bg\Vert_{L^{2}(\Omega)} \big).
\end{align*}
This proves the stability result (\ref{energy_norm_bound}) of Theorem~\ref{theorem_wellposed}. Additionally, it also shows that $\mathcal{F}$ maps $\BK_{h}$ into $\BK_{h}$, where 
\begin{align*}
\BK_{h} := \{ (\Bv, \Bc)\in \BZ_{h}:  \nu^{\frac{1}{2}} \Vert \Bu_{h}\Vert_{V} 
+\kappa^{\frac{1}{2}} \nu_{m}^{\frac{1}{2}} \Vert \Bb_{h}\Vert_{C}
\leq C \big( \nu^{-\frac{1}{2}} \Vert \Bf\Vert_{L^{2}(\Omega)}
 + \kappa^{-\frac{1}{2}}\nu_{m}^{-\frac{1}{2}} \Vert \Bg\Vert_{L^{2}(\Omega)} \big)\}.
\end{align*}

\subsection*{Step 3: the operator $\mathcal{F}$ is a contraction on $\BK_{h}$} 
To prove this, let $(\Bbeta_{1}, \Bd_{1}), (\Bbeta_{2}, \Bd_{2}) \in \BK_{h}$ and set 
$(\Bu_{1}, \Bb_{1}) := \mathcal{F}(\Bbeta_{1}, \Bd_{1})$ and 
$(\Bu_{2}, \Bb_{2}) := \mathcal{F}(\Bbeta_{2}, \Bd_{2})$. 
By definition, there exist $p_{1}, p_{2}\in Q_{h}$, $r_{1}, r_{2} \in S_{h}$ such that 
both $(\Bu_{1}, \Bb_{1}, p_{1}, r_{1})$ and $(\Bu_{2}, \Bb_{2}, p_{2}, r_{2})$ satisfy (\ref{DG_linear_mhd}). 

We set $\delta_{\Bu}:= \Bu_{1} - \Bu_{2}$, $\delta_{\Bb}:= \Bb_{1} - \Bb_{2}$, $\delta_{p}:= p_{1} - p_{2}$ 
and $\delta_{r} := r_{1} - r_{2}$. We get that 
\begin{align*}
& A_{h}(\delta_{\Bu}, \Bv) + B_{h}(\Bv, \delta_{p}) + M_{h}(\delta_{\Bb}, \Bv) + D_{h}(\Bc, \delta_{r}) 
- B_{h}(\delta_{\Bu}, q) - D_{h}(\delta_{\Bb}, s) + J_{h} (\delta_{r}, s)\\ 
&\qquad + O_{h}(\mathbb{P}(\Bbeta_{1}, \Lb \Bbeta_{1}\Rb); \Bu_{1}, \Bv)
 - O_{h}(\mathbb{P}(\Bbeta_{2}, \Lb \Bbeta_{2}\Rb); \Bu_{2}, \Bv) \\
 & \qquad + \big( C_{h}(\Bd_{1}; \Bv, \Bb_{1}) - C_{h}(\Bd_{2}; \Bv, \Bb_{2}) \big) 
 - \big( C_{h}(\Bd_{1}; \Bu_{1}, \Bc) - C_{h}(\Bd_{2}; \Bu_{2}, \Bc) \big) = 0
\end{align*} 
for any $(\Bv, \Bc, q, s) \in \BV_{h}\times \BC_{h}\times Q_{h} \times S_{h}$. 
Taking $(\Bv, \Bc, q, s):= (\delta_{\Bu}, \delta_{\Bc}, \delta_{p}, \delta_{r})$, we obtain 
\begin{align*}
& A_{h}(\delta_{\Bu}, \delta_{\Bu}) + M_{h}(\delta_{\Bb}, \delta_{\Bb}) + J_{h}(\delta_{r}, \delta_{r})\\
= & O_{h}(\mathbb{P}(\Bbeta_{2}, \Lb \Bbeta_{2}\Rb); \Bu_{2}, \delta_{\Bu}) 
- O_{h}(\mathbb{P}(\Bbeta_{1}, \Lb \Bbeta_{1}\Rb); \Bu_{1}, \delta_{\Bu})  \\
&\quad + \big( C_{h}(\Bd_{2}; \delta_{\Bu}, \Bb_{2}) -  C_{h}(\Bd_{1}; \delta_{\Bu}, \Bb_{1})\big) 
+ \big(C_{h}(\Bd_{1}; \Bu_{1}, \delta_{\Bb}) - C_{h}(\Bd_{2}; \Bu_{2}, \delta_{\Bb}) \big)\\
= & O_{h}(\mathbb{P}(\Bbeta_{2}, \Lb \Bbeta_{2}\Rb); \Bu_{2}, \delta_{\Bu}) 
- O_{h}(\mathbb{P}(\Bbeta_{1}, \Lb \Bbeta_{1}\Rb); \Bu_{1}, \delta_{\Bu})  \\
&\quad  - C_{h}(\Bd_{1} - \Bd_{2}; \delta_{\Bu}, \Bb_{2}) 
+ C_{h}(\Bd_{1}-\Bd_{2}; \Bu_{2}, \delta_{\Bb}) := I_{1} + I_{2} + I_{3}.
\end{align*}

It is easy to see that 
\begin{align*}
I_{1} & = - O_{h}(\mathbb{P}(\Bbeta_{2}, \Lb \Bbeta_{2}\Rb); \delta_{\Bu}, \delta_{\Bu}) 
+ O_{h}(\mathbb{P}(\Bbeta_{2}, \Lb \Bbeta_{2}\Rb) - \mathbb{P}(\Bbeta_{1}, \Lb \Bbeta_{1}\Rb); \Bu_{1}, \delta_{\Bu})\\
& \leq O_{h}(\mathbb{P}(\Bbeta_{2}, \Lb \Bbeta_{2}\Rb) - \mathbb{P}(\Bbeta_{1}, \Lb \Bbeta_{1}\Rb); \Bu_{1}, \delta_{\Bu}),
\end{align*}
since $O_{h}(\mathbb{P}(\Bbeta_{2}, \Lb \Bbeta_{2}\Rb); \delta_{\Bu}, \delta_{\Bu}) \geq 0$ due to Lemma \ref{lemma_proj}. 
According to \cite[Theorem~$5.3$]{DiPietroErn} (see also \cite[Proposition~$4.5$]{Karakashian}), 
discrete trace inequality and (\ref{proj_prop3}), we get that
\begin{align}
\label{contraction_ineq1}
I_{1} \leq C \Vert \Bbeta_{1} - \Bbeta_{2}\Vert_{V} \Vert \Bu_{1}\Vert_{V} \Vert \delta_{\Bu}\Vert_{V}.
\end{align}
Since $\Bd_{1} - \Bd_{2}$ satisfies (\ref{assumption_div_free}), by Lemma~\ref{lemma_Oh_cont}, we get that 
\begin{align}
\label{contraction_ineq2} 
I_{2} \leq & C \kappa \Vert \Bd_{1} - \Bd_{2}\Vert_{C} \Vert  \delta_{\Bu}\Vert_{V} \Vert \Bb_{2}\Vert_{C},\\
\label{contraction_ineq3}
I_{3} \leq & C \kappa \Vert \Bd_{1} - \Bd_{2}\Vert_{C} \Vert \Bu_{2}\Vert_{V} \Vert \delta_{\Bb}\Vert_{C}.
\end{align}

On the other hand, by Lemma \ref{coarsivity} we get that 
\begin{align*}
C \big( \nu \Vert \delta_{\Bu}\Vert_{V}^{2} 
+ \kappa \nu_{m} (\Vert \nabla \times \delta_\Bb \Vert_{L^2(\mathcal{T}_h)}^{2}
+   \Vert h^{-\frac12} \lb \delta_\Bb \rb_{T}\Vert_{L^{2}(\Cf_h)}^{2}\big) \leq A_{h}(\delta_{\Bu}, \delta_{\Bu}) 
+ M_{h}(\delta_{\Bb}, \delta_{\Bb}).
\end{align*}
In addition, since $\delta_{\Bb}$ satisfies \eqref{assumption_div_free}, by Theorem \ref{theorem_stability} and a simple scaling argument, we have
\[
\|\delta_{\Bb}\|_{\mathcal{T}_h} \le C \|\delta_{\Bb}\|_{L^3(\Omega)} \le C  (\Vert \nabla \times \delta_\Bb \Vert_{L^2(\mathcal{T}_h)}^{2}
+   \Vert h^{-\frac12} \lb \delta_\Bb \rb_{T}\Vert_{L^{2}(\Cf_h)}^{2}).
\]
The above two inequalities implies that
\begin{align*}
C \big( \nu \Vert \delta_{\Bu}\Vert_{V}^{2} 
+ \kappa \nu_{m} \|\delta_{\Bb}\|^2_C \big) \leq A_{h}(\delta_{\Bu}, \delta_{\Bu}) + M_{h}(\delta_{\Bb}, \delta_{\Bb}).
\end{align*}
Combining the above inequality with \eqref{contraction_ineq1}-\eqref{contraction_ineq3}, we get that 
\begin{align*}
\nu \Vert \delta_{\Bu}\Vert_{V}^{2} + \kappa \nu_{m} \Vert \delta_{\Bb}\Vert_{C}^{2} 
\leq & C \big( (\nu^{-2}\Vert \Bu_{1}\Vert_{V}^{2}) \cdot \nu \Vert \Bbeta_{1} - \Bbeta_{2}\Vert_{V}^{2} \\
&\qquad + (\kappa \nu^{-1} \nu_{m}^{-1} \Vert \Bb_{2}\Vert_{C}^{2} + \nu_{m}^{-2}\Vert \Bu_{2}\Vert_{V}^{2})
\cdot \kappa \nu_{m} \Vert \Bd_{1} - \Bd_{2}\Vert_{C}^{2}\big). 
\end{align*}

By virtue of (\ref{energy_norm_bound}),  $\mathcal{F}$ is a contraction on $\BK_{h}$ if the following quantities 
\begin{align*}
\nu^{-2} \Vert \Bf \Vert_{L^{2}(\Omega)},\quad \nu^{-1}\nu_{m}^{-1} \Vert \Bf\Vert_{L^{2}(\Omega)}, \quad
\nu^{-\frac{3}{2}}\kappa^{-\frac{1}{2}}\nu_{m}^{-\frac{1}{2}} \Vert \Bg\Vert_{L^{2}(\Omega)},\quad 
\nu^{-\frac{1}{2}} \kappa^{-\frac{1}{2}}\nu_{m}^{-\frac{3}{2}} \Vert \Bg\Vert_{L^{2}(\Omega)}
\end{align*}
are all small enough such that 
\begin{align}
\label{contraction_ineq4}
\nu^{-2}\Vert \Bu_{1}\Vert_{V}^{2} \leq \rho, \qquad 
\kappa \nu^{-1} \nu_{m}^{-1} \Vert \Bb_{2}\Vert_{C}^{2} + \nu_{m}^{-2}\Vert \Bu_{2}\Vert_{V}^{2} \leq \rho,
\end{align}
for some constant $\rho \in [0, 1)$. As a consequence, by the Brower's fixed point theorem, $\mathcal{F}$ has a unique fixed point in $\BK_{h}$ which is
a solution of \eqref{DG_mhd}. 

The uniqueness of the solution is trivial since if $\Bu_h, \Bb_h, p_h, r_h$ is a solution of $\eqref{DG_mhd}$, by \eqref{energy_norm_bound} $\Bu_h, \Bb_h$ must be a fixed point of $\mathcal{F}$ in $\BK_{h}$ which is unique.

\section{Proof of the error estimates}\label{estimate_DG}

In this section, we prove the error estimates of Theorem \ref{errors_ub}. We skip the proof of Theorem~\ref{errors_ub_type2} 
since it is almost the same as that of Theorem~\ref{errors_ub} (we only need to use Theorem~\ref{theorem_stability2} 
and the projection $\boldsymbol{\Pi}_{C^{I}}$ defined in (\ref{def_proj_C}) to replace Theorem~\ref{theorem_stability} 
and $\boldsymbol{\Pi}_{C}$). To do that, we proceed in the following steps to 
give estimates of the projection of the approximation errors, 
\begin{align*}
e^{\Bu}:= \boldsymbol{\Pi}^{RT} \Bu - \Bu_{h}, \quad e^{\Bb} := \boldsymbol{\Pi}_{C} \Bb - \Bb_{h},\quad 
e^{p} := \Pi_{Q} p - p_{h},\quad e^{r}:= \Pi_{S} r - r_{h},
\end{align*}
where $\boldsymbol{\Pi}^{RT}$ is the $k$-th order Raviart-Thomas (RT) projection, 
$\boldsymbol{\Pi}_{C}$ is defined in (\ref{def_proj_C}), $\Pi_{Q}$ is the $L^{2}$-orthogonal projection onto $Q_{h}$ 
and $\Pi_{S}$ is the Lagrange interpolation onto $H_{0}^{1}(\Omega)\cap S_{h}$. 
Since $\nabla\cdot \Bu = 0$, then $\nabla\cdot \boldsymbol{\Pi}^{RT} \Bu = 0$ due to the communication between 
$\boldsymbol{\Pi}^{RT}$ and the divergence operator. 
Thus $\boldsymbol{\Pi}^{RT} \Bu \in \BV_{h}$ such that $e^{\Bu}\in \BV_{h}$. 
Furthermore, due to (\ref{proj_C_prop1}) and (\ref{DG_mhd_eq4}), $e^{\Bb}$ satisfies (\ref{assumption_div_free}). 
So we have:
\begin{align}
\label{bound_eb}
\Vert e^{\Bb}\Vert_{L^{3}(\Omega)} \leq C \big( \Vert \nabla\times e^{\Bb} \Vert_{L^{2}(\Ct_{h})} 
+ \Vert h^{-\frac{1}{2}} \lb e^{\Bb}\rb_{T} \Vert_{L^{2}(\Cf_{h})} \big) \le C \|e^{\Bb}\|_C. 
\end{align}

\subsection{Estimates for $\Bu - \Bu_h$, $\Bb - \Bb_h$.}

\subsection*{Step 1: The error equations}
We  start our error analysis by obtaining the equations satisfied by the
projections of the errors.

\begin{lemma}
\label{lemma_error_eqs}
The projection of the error $(e^{\Bu}, e^{\Bb}, e^{p}, e^{r})$ satisfies 
\begin{subequations}
\label{error_eqs}
\begin{align}
\label{error_eq1}
A_{h}(e^{\Bu}, \Bv) +B_{h}(\Bv, e^{p}) = & A_{h} (\boldsymbol{\Pi}^{RT}\Bu - \Bu, \Bv) 
+ \Sigma_{F\in \Cf_{h}} \langle \Lb \Pi_{Q}p - p\Rb, \lb \Bv \rb_{N}\rangle_{F}\\
\nonumber
& \quad +O_{h}(\mathbb{P}(\Bu_{h}, \Lb \Bu_{h}\Rb); \Bu_{h}, \Bv) - O_{h}(\Bu; \Bu, \Bv) \\
\nonumber
&\quad + C_{h}(\Bb_{h}; \Bv, \Bb_{h}) - C_{h}(\Bb; \Bv, \Bb),\\
\label{error_eq2}
M_{h}(e^{\Bb}, \Bc) + D_{h}(\Bc, e^{r}) = & M_{h}(\boldsymbol{\Pi}_{C}\Bb - \Bb, \Bc) 
+ (\Bc, \nabla (\Pi_{S}r -r) )_{\Omega}\\
\nonumber 
&\quad +C_{h}(\Bb; \Bu, \Bc) - C_{h}(\Bb_{h}; \Bu_{h}, \Bc),\\
\label{error_eq3}
B_{h}(e^{\Bu}, q) = & 0,\\
\label{error_eq4}
D_{h}(e^{\Bb}, s) - J_{h}(e^{r},s) = & D_{h}(\boldsymbol{\Pi}_{C}\Bb - \Bb, s), 
\end{align}
\end{subequations}
for all $(\Bv, \Bc, q, r) \in \BV_{h}\times \BC_{h}\times Q_{h}\times S_{h}$. 
\end{lemma}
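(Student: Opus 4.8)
The plan is to derive the four error equations by the standard route of Galerkin consistency, subtraction of the discrete equations, and a projection splitting. First I would establish that the exact solution $(\Bu,\Bb,p,r)$ is consistent with the scheme, i.e.\ that it satisfies the identities obtained from (\ref{DG_mhd}) by using $\Bu$ and $\Bb$ in place of the discrete advection field $\mathbb{P}(\Bu_{h},\Lb\Bu_{h}\Rb)$ and of $\Bd$. Concretely, I would test the strong form (\ref{mhd_eqs}) with a discrete function, integrate by parts element by element, and recombine the element-boundary integrals into the face averages and jumps of each form. Because $\Bu\in H^1_0(\Omega;\mathbb{R}^{3})$ is single-valued with vanishing boundary trace, all jumps $\lb\Bu\rb$ vanish and the adjoint-consistency and penalty terms of $A_{h}$ drop out; the upwind corrections in $O_{h}$ collapse since $\Bu^{e}=\Bu$ and $\Bu|_{\partial\Omega}=0$; and since $\Bb\in H(\text{curl};\Omega)$ has continuous tangential trace, $\lb\Bb\rb_{T}=0$ annihilates the interface terms of $M_{h}$ and $C_{h}$. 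The minimal regularity (\ref{regularity}) with $\sigma,\sigma_m>\frac12$ is precisely what makes these traces meaningful. This yields $A_{h}(\Bu,\Bv)+O_{h}(\Bu;\Bu,\Bv)+C_{h}(\Bb;\Bv,\Bb)+B_{h}(\Bv,p)=(\Bf,\Bv)_{\Omega}$ together with the three companion identities.

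Second, I would subtract the discrete equations (\ref{DG_mhd}) from these consistency identities to obtain equations for the true errors $\Bu-\Bu_{h}$, $\Bb-\Bb_{h}$, $p-p_{h}$, $r-r_{h}$. The linear forms $A_{h},M_{h},B_{h},D_{h},J_{h}$ are bilinear and split at once, while I would keep the nonlinear contributions $O_{h}(\mathbb{P}(\Bu_{h},\Lb\Bu_{h}\Rb);\Bu_{h},\Bv)-O_{h}(\Bu;\Bu,\Bv)$ and $C_{h}(\Bb_{h};\Bv,\Bb_{h})-C_{h}(\Bb;\Bv,\Bb)$ as unsimplified differences on the right. I would then insert the projections by writing $\Bu-\Bu_{h}=-(\boldsymbol{\Pi}^{RT}\Bu-\Bu)+e^{\Bu}$, $\Bb-\Bb_{h}=-(\boldsymbol{\Pi}_{C}\Bb-\Bb)+e^{\Bb}$, and analogously for $p$ and $r$, moving the projection errors to the right-hand side. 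This already produces the stated left-hand sides $A_{h}(e^{\Bu},\Bv)+B_{h}(\Bv,e^{p})$, $M_{h}(e^{\Bb},\Bc)+D_{h}(\Bc,e^{r})$, $B_{h}(e^{\Bu},q)$, and $D_{h}(e^{\Bb},s)-J_{h}(e^{r},s)$.

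Third, I would simplify the projection-error terms using the defining properties of each projection, which is exactly what fixes the precise form of the right-hand sides. In $B_{h}(\Bv,\Pi_{Q}p-p)$ the volume part $-(\nabla\cdot\Bv,\Pi_{Q}p-p)_{K}$ vanishes because $\nabla\cdot\Bv|_{K}\in P_{k-1}(K)$ and $\Pi_{Q}$ is the $L^{2}$-projection onto $Q_{h}$, leaving only $\Sigma_{F}\langle\Lb\Pi_{Q}p-p\Rb,\lb\Bv\rb_{N}\rangle_{F}$. In $D_{h}(\Bc,\Pi_{S}r-r)$ the jump contribution $\langle\Lb\Bc\Rb,\lb\Pi_{S}r-r\rb\rangle_{F}$ vanishes because $\Pi_{S}r\in H^1_0(\Omega)\cap S_{h}$ and $r\in H^1_0(\Omega)$ are both continuous across interior faces and vanish on $\partial\Omega$, so $\lb\Pi_{S}r-r\rb=0$; only $(\Bc,\nabla(\Pi_{S}r-r))_{\Omega}$ survives. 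The same continuity forces $J_{h}(\Pi_{S}r-r,s)=0$, which leaves $D_{h}(\boldsymbol{\Pi}_{C}\Bb-\Bb,s)$ alone in (\ref{error_eq4}). For (\ref{error_eq3}) I would use $\nabla\cdot\boldsymbol{\Pi}^{RT}\Bu=0$ (the commuting-diagram property applied to $\nabla\cdot\Bu=0$) together with the normal continuity of $\boldsymbol{\Pi}^{RT}\Bu$ and of $\Bu$ to obtain $B_{h}(\boldsymbol{\Pi}^{RT}\Bu-\Bu,q)=0$, whence $B_{h}(e^{\Bu},q)=0$.

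The step I expect to be most delicate is the consistency verification of the first paragraph, in particular tracking the element-boundary integrals of $A_{h}$, $M_{h}$, $D_{h}$, and $C_{h}$ and confirming that they cancel exactly against the average/jump face terms of each form under only the minimal regularity $\sigma,\sigma_m>\frac12$, so that normal and tangential traces must be read in the appropriate trace sense. Everything else — the bilinear splitting, the projection substitution, and the three projection-property simplifications — is routine once consistency is in hand.
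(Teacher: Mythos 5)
Your proposal is correct and follows essentially the same route as the paper: establish consistency of the exact solution with the DG forms, insert the projections $\boldsymbol{\Pi}^{RT},\Pi_{Q},\boldsymbol{\Pi}_{C},\Pi_{S}$, and subtract the discrete equations, with the right-hand sides fixed by exactly the projection properties you cite ($L^{2}$-orthogonality of $\Pi_{Q}$ against $\nabla\cdot\Bv$, continuity of $\Pi_{S}r-r$ killing the jump and penalty terms, and the commuting property of $\boldsymbol{\Pi}^{RT}$ for \eqref{error_eq3}). The only difference is cosmetic — you subtract first and then split with projections, while the paper adds the projection corrections to the consistency identities before subtracting — and your added detail on verifying consistency under the minimal regularity is a point the paper simply asserts.
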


\begin{proof}
Notice that the exact solution $(\Bu, \Bb, p, r)$ of the equations (\ref{mhd_eqs}) satisfies 
\begin{align*}
A_{h}(\Bu, \Bv) + O_{h}(\Bu; \Bu, \Bv) + C_{h}(\Bb; \Bv, \Bb) + B_{h}(\Bv, p) & = (\Bf, \Bv)_{\Omega}, \\
M_{h}(\Bb, \Bc) - C_{h}(\Bb; \Bu, \Bc) + D_{h} (\Bc, r) & = (\Bg, \Bc)_{\Omega}, \\
B_{h} (\Bu, q) &= 0,\\
D_{h}(\Bb, s) - J_{h}(r, s) &= 0,
\end{align*}
for all $(\Bv, \Bc, q, s)\in \BV_{h}\times \BC_{h}\times Q_{h}\times S_{h}$.
By the definition of $\boldsymbol{\Pi}^{RT}$, $\Pi_{Q}$ and $\Pi_{S}$, we have that for any 
$(\Bv, \Bc, q, r) \in \BV_{h}\times \BC_{h}\times Q_{h}\times S_{h}$, 
\begin{align*}
& A_{h}(\boldsymbol{\Pi}^{RT}\Bu, \Bv) + O_{h}(\Bu; \Bu, \Bv) + C_{h}(\Bb; \Bv, \Bb) + B_{h}(\Bv, \Pi_{Q}p) = 
(\Bf, \Bv)_{\Omega}\\
&\qquad \qquad + A_{h} (\boldsymbol{\Pi}^{RT}\Bu - \Bu, \Bv) 
+ \Sigma_{F\in \Cf_{h}} \langle \Lb \Pi_{Q}p - p\Rb, \lb \Bv \rb_{N}\rangle_{F}, \\
& M_{h}(\boldsymbol{\Pi}_{C}\Bb, \Bc) - C_{h}(\Bb; \Bu, \Bc) + D_{h} (\Bc, \Pi_{S}r) = (\Bg, \Bc)_{\Omega} 
+ M_{h}(\boldsymbol{\Pi}_{C}\Bb - \Bb, \Bc)  + (\Bc, \nabla (\Pi_{S}r -r) )_{\Omega}, \\
& B_{h} (\boldsymbol{\Pi}^{RT}\Bu, q) = 0,\\
& D_{h}(\boldsymbol{\Pi}_{C}\Bb, s) - J_{h}(\Pi_{S}r, s) = D_{h}(\boldsymbol{\Pi}_{C}\Bb - \Bb, s).
\end{align*}
Subtracting the above equations by (\ref{DG_mhd}) gives the result.  
\end{proof}

\subsection*{Step 2: The energy identity derived from the error equations} 
Now we derive the energy identity which states as follows:

\begin{lemma}
We have the energy identity:
\begin{align}
\label{energy_identity}
& A_{h}(e^{\Bu}, e^{\Bu}) + M_{h}(e^{\Bb}, e^{\Bb}) 
+ O_{h}(\mathbb{P}(\Bu_{h}, \Lb \Bu_{h}\Rb); e^{\Bu}, e^{\Bu}) + J_{h} (e^{r}, e^{r}) \\
\nonumber 
= & A_{h} (\boldsymbol{\Pi}^{RT}\Bu - \Bu, e^{\Bu}) 
+ \Sigma_{F\in \Cf_{h}} \langle \Lb \Pi_{Q}p - p\Rb, \lb e^{\Bu} \rb_{N}\rangle_{F} 
+ M_{h}(\boldsymbol{\Pi}_{C}\Bb - \Bb, e^{\Bb})\\ 
\nonumber 
& \quad  + (e^{\Bb}, \nabla (\Pi_{S}r -r) )_{\Omega}  - D_{h}(\boldsymbol{\Pi}_{C}\Bb - \Bb, e^{r}) \\  
\nonumber 
& \quad + \big( O_{h}(\mathbb{P}(\Bu_{h}, \Lb \Bu_{h}\Rb); \boldsymbol{\Pi}^{RT}\Bu - \Bu, e^{\Bu})  
- O_{h} (\mathbb{P}(e^{\Bu}, \Lb e^{\Bu}\Rb) ; \Bu, e^{\Bu})
 + O_{h} (\boldsymbol{\Pi}^{RT} \Bu - \Bu; \Bu, e^{\Bu})  \big) \\
\nonumber 
& \quad + \big(C_{h} (\Bb_{h}; e^{\Bu}, \boldsymbol{\Pi}_{C}\Bb - \Bb) 
- C_{h}(\Bb_{h}; \boldsymbol{\Pi}^{RT}\Bu - \Bu, e^{\Bb})\big) \\
\nonumber 
& \quad + \big( -  C_{h}( e^{\Bb}; e^{\Bu}, \Bb) + C_{h} (e^{\Bb}; \Bu, e^{\Bb})\big) \\ 
\nonumber 
& \quad + \big( C_{h}(\boldsymbol{\Pi}_{C}\Bb - \Bb; e^{\Bu}, \Bb)
 - C_{h} (\boldsymbol{\Pi}_{C}\Bb - \Bb; \Bu, e^{\Bb})\big) \\
 := & T_{1} +\cdots + T_{9}.
\end{align}
\end{lemma}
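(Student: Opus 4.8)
The plan is to test the error equations of Lemma~\ref{lemma_error_eqs} against the errors themselves and assemble the four identities so that every mixed (off-diagonal) pairing cancels. Concretely, I would take $\Bv = e^{\Bu}$ in \eqref{error_eq1}, $\Bc = e^{\Bb}$ in \eqref{error_eq2}, $q = e^{p}$ in \eqref{error_eq3} and $s = e^{r}$ in \eqref{error_eq4}, and then form the combination \eqref{error_eq1} $+$ \eqref{error_eq2} $-$ \eqref{error_eq3} $-$ \eqref{error_eq4}. Subtracting \eqref{error_eq3} removes the pairing $B_{h}(e^{\Bu}, e^{p})$; subtracting \eqref{error_eq4} cancels $D_{h}(e^{\Bb}, e^{r})$, produces $+J_{h}(e^{r}, e^{r})$ on the left, and leaves $-D_{h}(\boldsymbol{\Pi}_{C}\Bb - \Bb, e^{r}) = T_{5}$ on the right. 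The remaining linear right-hand sides of Lemma~\ref{lemma_error_eqs} are read off directly as $T_{1}$ (the $A_{h}$ consistency term), $T_{2}$ (the pressure-jump term), $T_{3}$ (the $M_{h}$ consistency term) and $T_{4}$ (the multiplier term). So the only real work is to reorganize the nonlinear convective and coupling contributions into $T_{6}$ and $T_{7}+T_{8}+T_{9}$.

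For the convective part, writing $\Bbeta_{h} := \mathbb{P}(\Bu_{h}, \Lb \Bu_{h}\Rb)$ and using $\Bu_{h} = \boldsymbol{\Pi}^{RT}\Bu - e^{\Bu}$, I would split
\[
O_{h}(\Bbeta_{h}; \Bu_{h}, e^{\Bu})
= O_{h}(\Bbeta_{h}; \boldsymbol{\Pi}^{RT}\Bu, e^{\Bu})
- O_{h}(\Bbeta_{h}; e^{\Bu}, e^{\Bu}),
\]
moving the last term to the left, where it becomes the coercive quantity appearing in \eqref{energy_identity}. Since $\mathbb{P}$ is defined by linear equations it is linear in its argument, and \eqref{proj_prop5} gives $\mathbb{P}(\boldsymbol{\Pi}^{RT}\Bu, \Lb \boldsymbol{\Pi}^{RT}\Bu\Rb) = \boldsymbol{\Pi}^{RT}\Bu$, so $\Bbeta_{h} = \boldsymbol{\Pi}^{RT}\Bu - \mathbb{P}(e^{\Bu}, \Lb e^{\Bu}\Rb)$. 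After inserting $\boldsymbol{\Pi}^{RT}\Bu = \Bu + (\boldsymbol{\Pi}^{RT}\Bu - \Bu)$ and combining with $-O_{h}(\Bu;\Bu,e^{\Bu})$, the key observation is that the exact $\Bu$ is continuous and vanishes on $\partial\Omega$ by \eqref{mhd_eq5}, so the interior and inflow face contributions of $O_{h}(\,\cdot\,;\Bu,e^{\Bu})$ drop out and the form collapses to its volume part $\sum_{K}(\,\cdot\,\cdot\nabla\Bu, e^{\Bu})_{K}$, which is \emph{linear in the advecting field}. This linearity is exactly what yields $O_{h}(\Bbeta_{h};\Bu,e^{\Bu}) - O_{h}(\Bu;\Bu,e^{\Bu}) = O_{h}(\Bbeta_{h}-\Bu;\Bu,e^{\Bu})$; substituting $\Bbeta_{h}-\Bu = (\boldsymbol{\Pi}^{RT}\Bu-\Bu) - \mathbb{P}(e^{\Bu},\Lb e^{\Bu}\Rb)$ then recovers the three summands defining $T_{6}$.

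The coupling terms are pure trilinear bookkeeping, and I expect this to be the main obstacle --- not conceptually, but in accounting. I would substitute $\Bu_{h} = \boldsymbol{\Pi}^{RT}\Bu - e^{\Bu}$ and $\Bb_{h} = \boldsymbol{\Pi}_{C}\Bb - e^{\Bb}$ into
\[
C_{h}(\Bb_{h}; e^{\Bu}, \Bb_{h}) - C_{h}(\Bb; e^{\Bu}, \Bb) + C_{h}(\Bb; \Bu, e^{\Bb}) - C_{h}(\Bb_{h}; \Bu_{h}, e^{\Bb}),
\]
and expand using that $C_{h}(\Bd;\Bv,\Bc)$ is linear in each of its three slots. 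The two genuinely error-quadratic terms $\pm C_{h}(e^{\Bb}; e^{\Bu}, e^{\Bb})$ cancel against each other, the all-exact terms $C_{h}(\Bb;e^{\Bu},\Bb)$ and $C_{h}(\Bb;\Bu,e^{\Bb})$ cancel against the two subtracted exact forms, and regrouping the surviving terms according to which factor carries the projection error $\boldsymbol{\Pi}_{C}\Bb - \Bb$ or $\boldsymbol{\Pi}^{RT}\Bu - \Bu$ versus the error $e^{\Bb}$ reproduces $T_{7}$, $T_{8}$ and $T_{9}$, respectively. There are on the order of a dozen trilinear terms, the cross product inside $C_{h}$ makes the argument order easy to mishandle, and the interior-face average contributions must be tracked consistently; no inequalities or regularity enter, since the whole lemma is an exact algebraic identity and the care lies entirely in matching signs and argument positions against the definitions of $T_{1},\dots,T_{9}$.
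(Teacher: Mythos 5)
Your proposal follows the paper's proof essentially verbatim: test the error equations of Lemma~\ref{lemma_error_eqs} with $(e^{\Bu}, e^{\Bb}, -e^{p}, -e^{r})$, add, and reorganize the convective and coupling terms by trilinearity together with \eqref{proj_prop5}, the linearity of $\mathbb{P}$, and the fact that $O_{h}(\cdot\,;\Bu,e^{\Bu})$ reduces to its volume part for the continuous $\Bu$. The only slip is cosmetic: in the expansion that produces $T_{7},T_{8},T_{9}$ exactly as grouped in the statement, the pair that cancels is $\mp C_{h}(\Bb_{h}; e^{\Bu}, e^{\Bb})$ rather than $\pm C_{h}(e^{\Bb}; e^{\Bu}, e^{\Bb})$.
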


\begin{proof}
By taking $\Bv:= e^{\Bu}$, $\Bc:=e^{\Bb}$, $q:= -e^{p}$ and $s:= -e^{r}$ in the error equations \eqref{error_eqs} and 
adding all equations, 
we get that 
\begin{align*}
& A_{h}(e^{\Bu}, e^{\Bu}) + M_{h}(e^{\Bb}, e^{\Bb}) + J_{h} (e^{r}, e^{r}) \\
= & A_{h} (\boldsymbol{\Pi}^{RT}\Bu - \Bu, e^{\Bu}) 
+ \Sigma_{F\in \Cf_{h}} \langle \Lb \Pi_{Q}p - p\Rb, \lb e^{\Bu} \rb_{N}\rangle_{F} 
+ M_{h}(\boldsymbol{\Pi}_{C}\Bb - \Bb, e^{\Bb})\\
& \quad  + (e^{\Bb}, \nabla (\Pi_{S}r -r) )_{\Omega}  - D_{h}(\boldsymbol{\Pi}_{C}\Bb - \Bb, e^{r}) \\
& \quad + \big( O_{h}(\mathbb{P}(\Bu_{h}, \Lb \Bu_{h}\Rb); \Bu_{h}, e^{\Bu}) - O_{h}(\Bu; \Bu, e^{\Bu}) \big) \\ 
& \quad + \big( C_{h}(\Bb_{h}; e^{\Bu}, \Bb_{h}) - C_{h}(\Bb; e^{\Bu}, \Bb) \big) 
 + \big( C_{h}(\Bb; \Bu, e^{\Bb}) - C_{h}(\Bb_{h}; \Bu_{h}, e^{\Bb}) \big).
\end{align*}

we denote by 
\begin{align*}
I := & \big( O_{h}(\mathbb{P}(\Bu_{h}, \Lb \Bu_{h}\Rb); \Bu_{h}, e^{\Bu}) - O_{h}(\Bu; \Bu, e^{\Bu}) \big) 
 + \big( C_{h}(\Bb_{h}; e^{\Bu}, \Bb_{h}) - C_{h}(\Bb; e^{\Bu}, \Bb) \big) \\
 & \quad + \big( C_{h}(\Bb; \Bu, e^{\Bb}) - C_{h}(\Bb_{h}; \Bu_{h}, e^{\Bb}) \big).
\end{align*}
Then we get that 
\begin{align*}
I = & \big( O_{h}(\mathbb{P}(\Bu_{h}, \Lb \Bu_{h}\Rb); \Bu_{h} - \Bu, e^{\Bu})  
+ C_{h} (\Bb_{h}; e^{\Bu}, \Bb_{h} - \Bb) - C_{h}(\Bb_{h}; \Bu_{h} - \Bu, e^{\Bb})\big) \\
&\quad + \big( O_{h} (\mathbb{P}(\Bu_{h}, \Lb \Bu_{h}\Rb) - \Bu; \Bu, e^{\Bu}) 
+ C_{h}(\Bb_{h} - \Bb; e^{\Bu}, \Bb) - C_{h} (\Bb_{h} - \Bb; \Bu, e^{\Bb})\big)\\
= & \big( -O_{h}(\mathbb{P}(\Bu_{h}, \Lb \Bu_{h}\Rb); e^{\Bu}, e^{\Bu})  
- C_{h} (\Bb_{h}; e^{\Bu},  e^{\Bb}) + C_{h}(\Bb_{h}; e^{\Bu}, e^{\Bb})\big)\\
&\quad + \big( O_{h}(\mathbb{P}(\Bu_{h}, \Lb \Bu_{h}\Rb); \boldsymbol{\Pi}^{RT}\Bu - \Bu, e^{\Bu})  
+ C_{h} (\Bb_{h}; e^{\Bu}, \boldsymbol{\Pi}_{C}\Bb - \Bb) - C_{h}(\Bb_{h}; \boldsymbol{\Pi}^{RT}\Bu - \Bu, e^{\Bb})\big) \\
& \quad + \big( - O_{h} (\boldsymbol{\Pi}^{RT}\Bu - \mathbb{P}(\Bu_{h}, \Lb \Bu_{h}\Rb) ; \Bu, e^{\Bu}) 
-  C_{h}( e^{\Bb}; e^{\Bu}, \Bb) + C_{h} (e^{\Bb}; \Bu, e^{\Bb})\big) \\
& \quad + \big( O_{h} (\boldsymbol{\Pi}^{RT} \Bu - \Bu; \Bu, e^{\Bu}) 
+ C_{h}(\boldsymbol{\Pi}_{C}\Bb - \Bb; e^{\Bu}, \Bb) - C_{h} (\boldsymbol{\Pi}_{C}\Bb - \Bb; \Bu, e^{\Bb})\big).
\end{align*}
By (\ref{proj_prop5}), we have that 
\begin{align*}
I = & \big( O_{h}(\mathbb{P}(\Bu_{h}, \Lb \Bu_{h}\Rb); \boldsymbol{\Pi}^{RT}\Bu - \Bu, e^{\Bu})  
+ C_{h} (\Bb_{h}; e^{\Bu}, \boldsymbol{\Pi}_{C}\Bb - \Bb) - C_{h}(\Bb_{h}; \boldsymbol{\Pi}^{RT}\Bu - \Bu, e^{\Bb})\big) \\
& \quad + \big( - O_{h} (\mathbb{P}(e^{\Bu}, \Lb e^{\Bu}\Rb) ; \Bu, e^{\Bu}) 
-  C_{h}( e^{\Bb}; e^{\Bu}, \Bb) + C_{h} (e^{\Bb}; \Bu, e^{\Bb})\big) \\
& \quad + \big( O_{h} (\boldsymbol{\Pi}^{RT} \Bu - \Bu; \Bu, e^{\Bu}) 
+ C_{h}(\boldsymbol{\Pi}_{C}\Bb - \Bb; e^{\Bu}, \Bb) - C_{h} (\boldsymbol{\Pi}_{C}\Bb - \Bb; \Bu, e^{\Bb})\big)\\
& \quad  -O_{h}(\mathbb{P}(\Bu_{h}, \Lb \Bu_{h}\Rb); e^{\Bu}, e^{\Bu}).  
\end{align*}
Thus we obtain (\ref{energy_identity}). 
\end{proof}

\subsection*{Step 3: Lower bound of the left hand side of the energy identity}
Now we provide the lower bound of the left hand side of the energy identity (\ref{energy_identity}). 

\begin{lemma}
There is a positive constant $\gamma_{0}$ independent of mesh size such that 
\begin{align}
\label{lower_bound}
& \gamma_{0} \big( \nu \Vert e^{\Bu} \Vert_{V}^{2} + \kappa \nu_{m} \Vert e^{\Bb}\Vert_{C}^{2} 
+ \frac{s_{0}}{\kappa \nu_{m}} h^{-1} \Vert \lb e^{r}\rb \Vert_{L^{2}(\Cf_h)}^{2}\big) \\
\nonumber
\leq & A_{h}(e^{\Bu}, e^{\Bu}) + M_{h}(e^{\Bb}, e^{\Bb}) 
+ O_{h}(\mathbb{P}(\Bu_{h}, \Lb \Bu_{h}\Rb); e^{\Bu}, e^{\Bu}) + J_{h} (e^{r}, e^{r}).
\end{align}
\end{lemma}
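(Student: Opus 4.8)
The plan is to bound the four terms on the right-hand side of \eqref{lower_bound} from below one at a time, relying on the sign of two of them and on coercivity plus embedding for the other two. First I would note that the multiplier contribution is already in the required form: by the definition of $J_h$ one has $J_h(e^{r},e^{r}) = \frac{s_0}{\kappa\nu_m}\|h^{-1/2}\lb e^{r}\rb\|_{L^2(\Cf_h)}^2$, which is exactly the $e^{r}$-term appearing on the left, so it passes through verbatim. Next, since $\Bu_h$ satisfies $B_h(\Bu_h,q)=0$ by \eqref{DG_mhd_eq3}, Lemma~\ref{lemma_proj} applies with $\Bw_h=\Bu_h$ and $\Bv_h=e^{\Bu}$ and yields $O_h(\mathbb{P}(\Bu_h,\Lb\Bu_h\Rb);e^{\Bu},e^{\Bu})\ge 0$; this term may simply be dropped in the lower bound.

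For the remaining diffusion and curl-curl part $A_h(e^{\Bu},e^{\Bu})+M_h(e^{\Bb},e^{\Bb})$ I would invoke the coercivity estimate of Lemma~\ref{coarsivity}, which produces a multiple of $\nu\|e^{\Bu}\|_V^2 + \kappa\nu_m(\|\nabla\times e^{\Bb}\|_{L^2(\Ct_h)}^2 + \|h^{-1/2}\lb e^{\Bb}\rb_T\|_{L^2(\Cf_h)}^2)$. This already controls $\nu\|e^{\Bu}\|_V^2$ together with the curl-seminorm and the tangential-jump pieces of $\|e^{\Bb}\|_C^2$, but it does \emph{not} produce the $\|e^{\Bb}\|_{L^2(\Omega)}^2$ piece of $\|e^{\Bb}\|_C^2$. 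Recovering that missing piece is the one genuine obstacle, and it is precisely where the discrete Sobolev embedding of Theorem~\ref{theorem_stability} is needed.

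To close this gap I would use that $e^{\Bb}$ satisfies the weak divergence-free condition \eqref{assumption_div_free}, which was established just above from \eqref{proj_C_prop1} and \eqref{DG_mhd_eq4}; hence the bound \eqref{bound_eb} is available, giving $\|e^{\Bb}\|_{L^3(\Omega)}\le C(\|\nabla\times e^{\Bb}\|_{L^2(\Ct_h)}+\|h^{-1/2}\lb e^{\Bb}\rb_T\|_{L^2(\Cf_h)})$. Since $\Omega$ is bounded, Hölder's inequality gives $\|e^{\Bb}\|_{L^2(\Omega)}\le|\Omega|^{1/6}\|e^{\Bb}\|_{L^3(\Omega)}$, so that $\|e^{\Bb}\|_{L^2(\Omega)}^2\le C(\|\nabla\times e^{\Bb}\|_{L^2(\Ct_h)}^2+\|h^{-1/2}\lb e^{\Bb}\rb_T\|_{L^2(\Cf_h)}^2)$. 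Adding the seminorm and jump terms to both sides then upgrades the output of Lemma~\ref{coarsivity} to the full norm, namely $A_h(e^{\Bu},e^{\Bu})+M_h(e^{\Bb},e^{\Bb})\ge C(\nu\|e^{\Bu}\|_V^2+\kappa\nu_m\|e^{\Bb}\|_C^2)$.

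Finally I would collect the three ingredients: the coercivity-plus-embedding estimate for $A_h+M_h$, the nonnegativity of the $O_h$ term, and the exact identity for $J_h$. Taking $\gamma_0$ to be the minimum of $1$ and the constants produced above — all independent of $h$, $\nu$, $\nu_m$ and $\kappa$, as guaranteed by the statements of Lemma~\ref{coarsivity} and Theorem~\ref{theorem_stability} — yields \eqref{lower_bound}. The only delicate point is confirming that the embedding constant from Theorem~\ref{theorem_stability} is mesh-independent; granting that, the rest is a routine assembly of nonnegative contributions.
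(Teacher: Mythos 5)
Your proposal is correct and follows essentially the same route as the paper's own proof: nonnegativity of the $O_h$ term via Lemma~\ref{lemma_proj}, coercivity of $A_h+M_h$ via Lemma~\ref{coarsivity}, recovery of the missing $\Vert e^{\Bb}\Vert_{L^2(\Omega)}$ piece of $\Vert e^{\Bb}\Vert_C$ from \eqref{bound_eb} (i.e.\ Theorem~\ref{theorem_stability} plus H\"older on the bounded domain), and the exact identity for $J_h$. Your write-up is in fact more explicit than the paper's two-line argument about where the $L^3$-to-$L^2$ step enters, but the substance is identical.
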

\begin{proof}
By Lemma \ref{lemma_proj}, we get that $O_{h}(\mathbb{P}(\Bu_{h}, \Lb \Bu_{h}\Rb); e^{\Bu}, e^{\Bu}) \geq 0$. 
By (\ref{bound_eb}) and Lemma \ref{coarsivity}, we get that 
\begin{align*}
C( \nu \|e^{\Bu}\|^2_V + \kappa \nu_{m} \Vert e^{\Bb} \Vert_{C}^{2}) \leq C M_{h} (e^{\Bb}, e^{\Bb}). 
\end{align*}
Thus we obtain (\ref{lower_bound}). 
\end{proof}

\subsection*{Step 4: Upper bound of the right hand side of the energy identity}
Now we provide upper bound of the right hand side of the energy identity (\ref{energy_identity}). 
We denote by $\boldsymbol{\Pi}_{V}$ the $L^{2}$-orthogonal projection onto $\BV_{h}$. 
Since $\BV_{h} = \BC_{h}$, $\boldsymbol{\Pi}_{V}$ is also the $L^{2}$-orthogonal projection onto $\BC_{h}$. We bound $T_1 - T_9$ as follows:

For $T_1, T_2$, we apply Cauchy-Schwarz inequality with the approximation properties of the projections to have:
\begin{align*}
T_{1} \leq & C \nu \Vert \boldsymbol{\Pi}^{RT}\Bu - \Bu \Vert_{V} \Vert e^{\Bu}\Vert_{V} 
\leq C \nu h^{\min (k, \sigma)} \Vert \Bu\Vert_{H^{1+\sigma}(\Omega)} \Vert e^{\Bu}\Vert_{V},\\
T_{2} \leq & C \Vert h^{\frac{1}{2}} (\Pi_{Q} p - p ) \Vert_{L^{2}(\Cf_{h})} \Vert h^{-\frac{1}{2}} \lb e^{\Bu}\rb_{N} \Vert_{L^{2}(\Cf_{h})}
\leq C h^{\min (k, \sigma)} \Vert p\Vert_{H^{\sigma}(\Omega)}  \Vert h^{-\frac{1}{2}} \lb e^{\Bu}\rb_{N} \Vert_{L^{2}(\Cf_{h})}.\\
\intertext{Similarly, for $T_4$ we have:}
T_{4} \leq & C h^{\min (k, \sigma_{m})} \Vert r\Vert_{H^{1+\sigma_{m}}(\Omega)} \Vert e^{\Bb}\Vert_{C}. 
\end{align*}

For $T_3$, since $\lb \boldsymbol{\Pi}_{C} \Bb\rb_{T} = \lb \Bb\rb_{T} = \boldsymbol{0}$ on $\Ce_{h}$, we have
\begin{align*}
T_{3} = & \Sigma_{K\in \Ct_{h}} (\kappa \nu_{m} \nabla\times (\boldsymbol{\Pi}_{C} \Bb - \Bb), \nabla\times e^{\Bb})_{K} 
- \Sigma_{F\in \Cf_{h}} \langle \Lb \kappa \nu_{m} \nabla \times (\boldsymbol{\Pi}_{C} \Bb - \Bb) \Rb, \lb e^{\Bb}\rb_{T} \rangle_{F} \\
= & \Sigma_{K\in \Ct_{h}} (\kappa \nu_{m} \nabla\times (\boldsymbol{\Pi}_{C} \Bb - \Bb), \nabla\times e^{\Bb})_{K} 
- \Sigma_{F\in \Cf_{h}} \langle \Lb \kappa \nu_{m}
 (\nabla \times \boldsymbol{\Pi}_{C} \Bb - \boldsymbol{\Pi}_{V} \nabla\times \Bb) \Rb, \lb e^{\Bb}\rb_{T} \rangle_{F} \\
&\quad - \Sigma_{F\in \Cf_{h}} \langle \Lb \kappa \nu_{m}  (\boldsymbol{\Pi}_{V} \nabla \times\Bb - \nabla \times\Bb) \Rb, 
\lb e^{\Bb}\rb_{T} \rangle_{F} \\
\leq & C \kappa \nu_{m} \big( \Vert \nabla\times (\boldsymbol{\Pi}_{C} \Bb - \Bb)\Vert_{L^{2}(\Omega)}
 \Vert \nabla\times e^{\Bb}\Vert_{L^{2}(\Omega)}\\
&\quad  + \Vert \nabla \times \boldsymbol{\Pi}_{C} \Bb - \boldsymbol{\Pi}_{V} \nabla\times \Bb \Vert_{L^{2}(\Omega)} 
 \Vert h^{-\frac{1}{2}} \lb e^{\Bb} \rb_{T} \Vert_{L^{2}(\Cf_{h})} \\
& \quad  + \Vert h^{\frac{1}{2}} (\boldsymbol{\Pi}_{V} \nabla \times\Bb - \nabla \times\Bb)\Vert_{L^{2}(\Cf_{h})} 
 \Vert h^{-\frac{1}{2}} \lb e^{\Bb} \rb_{T} \Vert_{L^{2}(\Cf_{h})} \big) \\
\leq & C \kappa \nu_{m} h^{\min (k, \sigma_{m})} \Vert \nabla\times \Bb\Vert_{H^{\sigma_{m}}(\Omega)} \Vert e^{\Bb}\Vert_{C}. 
\end{align*}
The last inequality above is due to (\ref{proj_C_prop3}). 

With respect to the term $T_{5}$, we choose $\tilde{e}^{r}\in H_{0}^{1}(\Omega) \cap S_{h}$ 
(see \cite[Theorem~$2.2$ and Theorem~$2.3$]{Karakashian03}) satisfying 
\begin{equation}\label{er_tilde}
\Vert \nabla (e^{r} - \tilde{e}^{r}) \Vert_{L^{2}(\Ct_{h})} \leq C \Vert h^{-\frac{1}{2}} \lb e^{r}\rb \Vert_{L^{2}(\Cf_{h})}.
\end{equation}
By the definition of $D_{h}$, the fact that $\nabla\cdot \Bb = 0$ and (\ref{proj_C_prop1}), we have that
\begin{align*}
D_{h}(\boldsymbol{\Pi}_{C} \Bb - \Bb, \tilde{e}^{r}) = (\boldsymbol{\Pi}_{C} \Bb - \Bb, \nabla \tilde{e}^{r})_{\Omega} = 0.
\end{align*}
On the other hand, 
\begin{align*}
T_5 =& D_{h}(\boldsymbol{\Pi}_{C} \Bb - \Bb, e^{r} - \tilde{e}^{r}) \\
\leq & C \big( \Vert \boldsymbol{\Pi}_{C} \Bb - \Bb\Vert_{L^{2}(\Omega)} 
+ \Vert h^{\frac{1}{2}} (\boldsymbol{\Pi}_{C} \Bb - \Bb) \Vert_{L^{2}(\Cf_{h})} \big) 
\Vert h^{-\frac{1}{2}} \lb e^{r}\rb \Vert_{L^{2}(\Cf_{h})}\\
\leq & C \big( \Vert \boldsymbol{\Pi}_{C} \Bb - \Bb\Vert_{L^{2}(\Omega)} 
+ \Vert h^{\frac{1}{2}} (\boldsymbol{\Pi}_{C} \Bb - \boldsymbol{\Pi}_{V}\Bb) \Vert_{L^{2}(\Cf_{h})} 
+ \Vert h^{\frac{1}{2}} (\boldsymbol{\Pi}_{V} \Bb - \Bb) \Vert_{L^{2}(\Cf_{h})}\big) 
\Vert h^{-\frac{1}{2}} \lb e^{r}\rb \Vert_{L^{2}(\Cf_{h})} \\
\leq & C \big( \Vert \boldsymbol{\Pi}_{C} \Bb - \Bb\Vert_{L^{2}(\Omega)} 
+ \Vert h^{\frac{1}{2}} (\boldsymbol{\Pi}_{V} \Bb - \Bb) \Vert_{L^{2}(\Cf_{h})} \big) 
\Vert h^{-\frac{1}{2}} \lb e^{r}\rb \Vert_{L^{2}(\Cf_{h})}\\
\leq & C h^{\min (k, \sigma_{m})} \Vert \Bb \Vert_{H^{\sigma_{m}}(\Omega)}
\Vert h^{-\frac{1}{2}} \lb e^{r}\rb \Vert_{L^{2}(\Cf_{h})}. 
\end{align*}
The last inequality above is due to (\ref{proj_C_prop2}). Thus we get that 
\begin{align*}
T_{5} \leq  C h^{\min (k, \sigma_{m})} \Vert \Bb \Vert_{H^{\sigma_{m}}(\Omega)}
\Vert h^{-\frac{1}{2}} \lb e^{r}\rb \Vert_{L^{2}(\Cf_{h})}.
\end{align*}

For the convection term $T_6$, according to \cite[Proposition~$4.2$]{CockburnKanschatSchoetzauNS05}, we get that 
\begin{align*}
T_{6} & \le C (\|\mathbb{P}(\Bu_{h}, \Lb \Bu_{h}\Rb)\|_V \|\boldsymbol{\Pi}^{RT} \Bu - \Bu\|_V + \|\mathbb{P}(e^{\Bu}, \Lb e^{\Bu}\Rb)\|_V \|\Bu\|_V + \|\boldsymbol{\Pi}^{RT} \Bu - \Bu\|_V \|\Bu\|_V) \|e^{\Bu}\|_V \\
&\leq C \big( h^{\min(k,\sigma)}\Vert \Bu\Vert_{H^{1+\sigma}(\Omega)} (\Vert \Bu\Vert_{H^{1}(\Omega)} 
+ \Vert \Bu_{h}\Vert_{V})  + \Vert \Bu\Vert_{H^{1}(\Omega)} \Vert e^{\Bu}\Vert_{V} \big) \Vert e^{\Bu} \Vert_{V}.
\end{align*}
The last inequality is due to \eqref{proj_prop3} and the approximation property of $\boldsymbol{\Pi}^{RT}$. 

The estimates for the last three terms are more involved which we state as following lemmas.
\begin{lemma}
We have
\begin{align}
\label{bound_T7} 
T_{7} \leq C \kappa \Vert \Bb_{h}\Vert_{C} 
\big( h^{\min(k,\sigma)} \Vert \Bu\Vert_{H^{1+\sigma}(\Omega)} \Vert e^{\Bb}\Vert_{C} 
+ h^{\min(k,\sigma_{m})} \Vert \nabla \times \Bb\Vert_{H^{\sigma_{m}}(\Omega)} \Vert e^{\Bu}\Vert_{V}\big).
\end{align}
\end{lemma}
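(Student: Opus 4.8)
The plan is to split $T_{7}$ into its two coupling contributions,
\begin{align*}
T_{7} = C_{h}(\Bb_{h}; e^{\Bu}, \boldsymbol{\Pi}_{C}\Bb - \Bb)
- C_{h}(\Bb_{h}; \boldsymbol{\Pi}^{RT}\Bu - \Bu, e^{\Bb}),
\end{align*}
and to bound each by the Hölder-type estimate isolated inside the proof of Lemma~\ref{lemma_Oh_cont}: the volume part of $C_{h}(\Bd;\Bv,\Bc)$ is controlled by $C\kappa\Vert \Bd\Vert_{L^{3}(\Omega)}\Vert \Bv\Vert_{L^{6}(\Omega)}\Vert \nabla\times \Bc\Vert_{L^{2}(\Ct_{h})}$ and the interior-face part by $C\kappa\Vert \Bd\Vert_{L^{3}(\Omega)}\Vert \Bv\Vert_{L^{6}(\Omega)}\Vert h^{-\frac12}\lb \Bc\rb_{T}\Vert_{L^{2}(\Cf_{h})}$. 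The ingredient common to both terms is control of $\Vert \Bb_{h}\Vert_{L^{3}(\Omega)}$. To obtain it, I would first check that $\Bb_{h}$ is discretely divergence-free in the sense of \eqref{assumption_div_free}: choosing $s \in H_{0}^{1}(\Omega)\cap S_{h}$ in \eqref{DG_mhd_eq4} makes every jump $\lb s\rb$ vanish, so $J_{h}(r_{h},s)=0$ and $D_{h}(\Bb_{h},s)=(\Bb_{h},\nabla s)_{\Omega}$, whence $(\Bb_{h},\nabla s)_{\Omega}=0$. Theorem~\ref{theorem_stability} then gives $\Vert \Bb_{h}\Vert_{L^{3}(\Omega)}\le C\Vert \Bb_{h}\Vert_{C}$, the prefactor appearing in \eqref{bound_T7}.

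For the first term, $\Bv=e^{\Bu}\in\BV_{h}$ and the third slot is filled by $\boldsymbol{\Pi}_{C}\Bb-\Bb$. Since both $\boldsymbol{\Pi}_{C}\Bb$ and $\Bb$ belong to $H(\text{curl},\Omega)$, we have $\lb \boldsymbol{\Pi}_{C}\Bb-\Bb\rb_{T}=\boldsymbol{0}$ on every interior face, so the interior-face part drops out entirely and only the volume part remains. I would bound it by $C\kappa\Vert \Bb_{h}\Vert_{L^{3}(\Omega)}\Vert e^{\Bu}\Vert_{L^{6}(\Omega)}\Vert \nabla\times(\boldsymbol{\Pi}_{C}\Bb-\Bb)\Vert_{L^{2}(\Omega)}$, then use the discrete Sobolev embedding \cite[Theorem~$5.3$]{DiPietroErn} for $\Vert e^{\Bu}\Vert_{L^{6}(\Omega)}\le C\Vert e^{\Bu}\Vert_{V}$ and identity \eqref{proj_C_prop3} together with the N\'ed\'elec approximation property for $\Vert \nabla\times(\boldsymbol{\Pi}_{C}\Bb-\Bb)\Vert_{L^{2}(\Omega)}=\Vert \nabla\times(\boldsymbol{\Pi}_{N}\Bb-\Bb)\Vert_{L^{2}(\Omega)}\le Ch^{\min(k,\sigma_{m})}\Vert \nabla\times \Bb\Vert_{H^{\sigma_{m}}(\Omega)}$. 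This yields the summand $C\kappa\Vert \Bb_{h}\Vert_{C}\,h^{\min(k,\sigma_{m})}\Vert \nabla\times \Bb\Vert_{H^{\sigma_{m}}(\Omega)}\Vert e^{\Bu}\Vert_{V}$.

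For the second term, $\Bc=e^{\Bb}\in\BC_{h}$ supplies the factor $\Vert e^{\Bb}\Vert_{C}$ directly (via $\Vert \nabla\times e^{\Bb}\Vert_{L^{2}(\Ct_{h})}$ in the volume part and $\Vert h^{-\frac12}\lb e^{\Bb}\rb_{T}\Vert_{L^{2}(\Cf_{h})}$ in the face part), while $\Bv=\boldsymbol{\Pi}^{RT}\Bu-\Bu$ occupies the $L^{6}$ slot. I would establish $\Vert \boldsymbol{\Pi}^{RT}\Bu-\Bu\Vert_{L^{6}(\Omega)}\le Ch^{\min(k,\sigma)}\Vert \Bu\Vert_{H^{1+\sigma}(\Omega)}$ by splitting $\boldsymbol{\Pi}^{RT}\Bu-\Bu=(\boldsymbol{\Pi}^{RT}\Bu-\boldsymbol{\Pi}_{V}\Bu)+(\boldsymbol{\Pi}_{V}\Bu-\Bu)$, applying an elementwise inverse inequality ($L^{2}\to L^{6}$, costing $h^{-1}$ in three dimensions) to the piecewise-polynomial first piece together with the standard $L^{2}$ estimate $\Vert \boldsymbol{\Pi}^{RT}\Bu-\Bu\Vert_{L^{2}(\Omega)}\le Ch^{\min(k+1,1+\sigma)}\Vert \Bu\Vert_{H^{1+\sigma}(\Omega)}$, and the $L^{6}$ approximation of $\boldsymbol{\Pi}_{V}$ for the second. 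Because here $\Bv$ is not piecewise polynomial, the interior-face part needs a trace inequality applied to the same splitting — inverse/scaling on the polynomial piece and the continuous trace theorem on the smooth remainder $\Bu$ — to bound $\Vert h^{1/6}(\boldsymbol{\Pi}^{RT}\Bu-\Bu)\Vert_{L^{6}(\Cf_{h})}$, paired with the discrete trace bound $\Vert h^{1/3}\Bb_{h}\Vert_{L^{3}(\Cf_{h})}\le C\Vert \Bb_{h}\Vert_{L^{3}(\Omega)}$. Collecting the factors gives the summand $C\kappa\Vert \Bb_{h}\Vert_{C}\,h^{\min(k,\sigma)}\Vert \Bu\Vert_{H^{1+\sigma}(\Omega)}\Vert e^{\Bb}\Vert_{C}$, completing \eqref{bound_T7}.

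The step I expect to be the genuine obstacle is the uniform bound $\Vert \Bb_{h}\Vert_{L^{3}(\Omega)}\le C\Vert \Bb_{h}\Vert_{C}$: this is exactly where the novel embedding Theorem~\ref{theorem_stability} is indispensable, and it rests on the discrete divergence-free identity for $\Bb_{h}$ verified above — without that identity the nonconforming magnetic field cannot be controlled in $L^{3}$ by its energy norm. A secondary technical point is the $L^{6}$ approximation estimate for $\boldsymbol{\Pi}^{RT}\Bu-\Bu$ under minimal regularity $\sigma>\tfrac12$: since $H^{\sigma}\hookrightarrow L^{6}$ fails for $\sigma<1$, one cannot invoke a direct $L^{6}$ interpolation estimate and must instead route everything through the $L^{2}$ bound and the inverse inequality, and likewise treat the face contribution of the nonpolynomial argument by a continuous trace inequality rather than a discrete one.
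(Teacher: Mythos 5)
Your proposal is correct and follows essentially the same route as the paper: the same splitting of $T_{7}$, the same observation that $\lb\boldsymbol{\Pi}_{C}\Bb-\Bb\rb_{T}$ vanishes on the interior faces so that only one face term survives, the same use of the discrete divergence-free identity and Theorem~\ref{theorem_stability} to get $\Vert\Bb_{h}\Vert_{L^{3}(\Omega)}\le C\Vert\Bb_{h}\Vert_{C}$, the same $(6,3,2)$ H\"older estimates for the volume terms, and the same $\boldsymbol{\Pi}_{V}$-splitting with inverse inequalities to control $\Vert\boldsymbol{\Pi}^{RT}\Bu-\Bu\Vert_{L^{6}(\Omega)}$ under the minimal regularity $\sigma>\tfrac12$. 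The only cosmetic difference is in the surviving face term, where the paper applies H\"older with exponents $(\infty,2,2)$ together with the inverse estimate $h\Vert\Bb_{h}\Vert_{L^{\infty}(\partial K)}\le C\Vert\Bb_{h}\Vert_{L^{3}(K)}$ rather than your $(3,6,2)$ split, which lets it keep the nonpolynomial remainder $\boldsymbol{\Pi}_{V}\Bu-\Bu$ in $L^{2}$ on faces and thereby sidesteps the somewhat more delicate $L^{6}$ face-trace estimate your version requires.
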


\begin{proof}
We recall that 
\begin{align*}
T_{7} = & C_{h} (\Bb_{h}; e^{\Bu}, \boldsymbol{\Pi}_{C}\Bb - \Bb) - C_{h}(\Bb_{h}; \boldsymbol{\Pi}^{RT}\Bu - \Bu, e^{\Bb}) \\
= & \Sigma_{K\in\Ct_{h}}  (\kappa(e^{\Bu}\times \Bb_{h}), \nabla \times (\boldsymbol{\Pi}_{C}\Bb - \Bb))_{K} 
- \Sigma_{F\in \Cf_{h}^{I}} \langle \kappa \Lb e^{\Bu}\times \Bb_{h}\Rb, \lb \boldsymbol{\Pi}_{C}\Bb - \Bb\rb_{T}\rangle_{F} \\
&\quad + \Sigma_{K\in\Ct_{h}}  (\kappa((\boldsymbol{\Pi}^{RT}\Bu - \Bu)\times \Bb_{h}), \nabla \times e^{\Bb})_{K} 
- \Sigma_{F\in \Cf_{h}^{I}} \langle \kappa \Lb (\boldsymbol{\Pi}^{RT}\Bu - \Bu) \times \Bb_{h}\Rb, \lb e^{\Bb}\rb_{T}\rangle_{F}.
\end{align*}
Due to the construction of $\boldsymbol{\Pi}_{C}$ in (\ref{def_proj_C}), $\lb \boldsymbol{\Pi}_{C} \Bb\rb_{T} = \boldsymbol{0}$ 
on $\Ce_{h}$. Thus we have that 
\begin{align*}
T_{7} = & - \Sigma_{F\in \Cf_{h}^{I}} \langle \kappa \Lb (\boldsymbol{\Pi}^{RT}\Bu - \Bu) \times \Bb_{h}\Rb, \lb e^{\Bb}\rb_{T}\rangle_{F} \\
&\quad + \Sigma_{K\in\Ct_{h}}  (\kappa(e^{\Bu}\times \Bb_{h}), \nabla \times (\boldsymbol{\Pi}_{C}\Bb - \Bb))_{K} 
+ \Sigma_{K\in\Ct_{h}}  (\kappa((\boldsymbol{\Pi}^{RT}\Bu - \Bu)\times \Bb_{h}), \nabla \times e^{\Bb})_{K}\\
:= & T_{71} + T_{72} + T_{73}. 
\end{align*}
By generalized H{\"{o}}lder's inequality, (\ref{discrete_energy_norm_bound}) and scaling argument to $\Bb_{h}$, we have
\begin{align*}
T_{71} \leq & C \kappa \Sigma_{K \in \Ct_{h}} h \Vert \Bb_{h}\Vert_{L^{\infty}(\partial K)}
\Vert h^{-\frac{1}{2}} (\boldsymbol{\Pi}^{RT}\Bu - \Bu)\Vert_{L^{2}(\partial K)}
 \Vert h^{-\frac{1}{2}} \lb e^{\Bb} \rb_{T} \Vert_{L^{2}(\partial K)}\\
\leq & C \kappa \Sigma_{K \in \Ct_{h}} \Vert \Bb_{h}\Vert_{L^{3}( K)}
\Vert h^{-\frac{1}{2}} (\boldsymbol{\Pi}^{RT}\Bu - \Bu)\Vert_{L^{2}(\partial K)}
 \Vert h^{-\frac{1}{2}} \lb e^{\Bb} \rb_{T} \Vert_{L^{2}(\partial K)}\\
 \leq & C \kappa \Sigma_{K \in \Ct_{h}} \Vert \Bb_{h}\Vert_{L^{3}( K)}
\big( \Vert h^{-\frac{1}{2}} (\boldsymbol{\Pi}^{RT}\Bu - \boldsymbol{\Pi}_{V}\Bu)\Vert_{L^{2}(\partial K)}
 + \Vert h^{-\frac{1}{2}} (\boldsymbol{\Pi}_{V}\Bu - \Bu)\Vert_{L^{2}(\partial K)} \big)
 \Vert h^{-\frac{1}{2}} \lb e^{\Bb} \rb_{T} \Vert_{L^{2}(\partial K)}\\
\leq & C \kappa \Vert \Bb_{h}\Vert_{L^{3}( \Omega)} \big( 
\Vert h^{-1} (\boldsymbol{\Pi}^{RT}\Bu - \boldsymbol{\Pi}_{V}\Bu )\Vert_{L^{2}(\Omega)}
+\Vert h^{-\frac{1}{2}} (\boldsymbol{\Pi}_{V}\Bu - \Bu)\Vert_{L^{2}(\Cf_{h})} \big)
 \Vert h^{-\frac{1}{2}} \lb e^{\Bb} \rb_{T} \Vert_{L^{2}(\Cf_{h})}\\ 
 \leq & C \kappa h^{\min(k, \sigma)} \Vert \Bu\Vert_{H^{1+\sigma}(\Omega)} \Vert \Bb_{h}\Vert_{C}
  \Vert h^{-\frac{1}{2}} \lb e^{\Bb} \rb_{T} \Vert_{L^{2}(\Cf_{h})}. 
\end{align*}
By generalized H{\"{o}}lder's inequality, Sobolev imbedding, \cite[Theorem~$5.3$]{DiPietroErn}, 
(\ref{discrete_energy_norm_bound}) and (\ref{proj_C_prop3}), 
\begin{align*}
T_{72} \leq & C \kappa \Vert e^{\Bu}\Vert_{L^6(\Omega)} \Vert \Bb_{h}\Vert_{L^3(\Omega)} \Vert \nabla \times 
(\boldsymbol{\Pi}_{C}\Bb - \Bb) \Vert_{L^{2}(\Omega)} \le C \kappa \Vert e^{\Bu}\Vert_{V} \Vert \Bb_{h}\Vert_{C} \Vert \nabla \times 
(\boldsymbol{\Pi}_{C}\Bb - \Bb) \Vert_{L^{2}(\Omega)} \\
\leq & C \kappa h^{\min(k, \sigma_{m})} \Vert \nabla\times \Bb\Vert_{H^{\sigma_{m}}(\Omega)} 
\Vert \Bb_{h}\Vert_{C} \Vert e^{\Bu}\Vert_{V}, \\
T_{73} \le &C \kappa \|\boldsymbol{\Pi}^{RT}\Bu - \Bu\|_{L^6(\Omega)} \|\Bb_h\|_{L^3(\Omega)} 
\|\nabla \times \Bb_h\|_{L^2(\Ct_h)} \\
 \leq  & C \kappa \big( \Vert h^{-1} (\boldsymbol{\Pi}^{RT}\Bu - \Bu )\Vert_{L^{2}(\Omega)} +
\Vert \boldsymbol{\Pi}^{RT}\Bu - \Bu\Vert_{H^{1}(\Ct_{h})} \big) \Vert \Bb_{h}\Vert_{C} \Vert e^{b}\Vert_{C} \\
 \leq  & C \kappa \big( \Vert h^{-1} (\boldsymbol{\Pi}^{RT}\Bu - \Bu )\Vert_{L^{2}(\Omega)} +
\Vert \boldsymbol{\Pi}^{RT}\Bu - \boldsymbol{\Pi}_{V}\Bu\Vert_{H^{1}(\Ct_{h})} 
+ \Vert \boldsymbol{\Pi}_{V}\Bu - \Bu\Vert_{H^{1}(\Ct_{h})}\big) \Vert \Bb_{h}\Vert_{C} \Vert e^{b}\Vert_{C} \\
 \leq  & C \kappa \big( \Vert h^{-1} (\boldsymbol{\Pi}^{RT}\Bu - \Bu )\Vert_{L^{2}(\Omega)} +
\Vert h^{-1}(\boldsymbol{\Pi}^{RT}\Bu - \boldsymbol{\Pi}_{V}\Bu )\Vert_{L^{2}(\Omega)} 
+ \Vert \boldsymbol{\Pi}_{V}\Bu - \Bu\Vert_{H^{1}(\Ct_{h})}\big) \Vert \Bb_{h}\Vert_{C} \Vert e^{b}\Vert_{C} \\
\leq & C \kappa h^{\min(k,\sigma)}\Vert \Bu \Vert_{H^{1+\sigma}(\Omega)} \Vert \Bb_{h} \Vert_{C} \Vert e^{\Bb}\Vert_{C}. 
\end{align*}

Thus we obtain (\ref{bound_T7}).
\end{proof}

\begin{lemma}
\begin{align}
\label{bound_T8}
T_{8} \leq C \kappa \big(\Vert \Bu \Vert_{H^{1}(\Omega)}  \Vert e^{\Bb}\Vert_{C}^{2} 
+ \Vert \nabla\times \Bb\Vert_{L^{2}(\Omega)} \Vert e^{\Bu} \Vert_{V} \Vert e^{\Bb} \Vert_{C} \big).
\end{align}
\end{lemma}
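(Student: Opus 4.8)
The plan is to split $T_{8}$ according to its two constituent coupling forms,
\[
T_{8} = -\,C_{h}(e^{\Bb}; e^{\Bu}, \Bb) + C_{h}(e^{\Bb}; \Bu, e^{\Bb}),
\]
and to treat each piece so that the exact velocity $\Bu$ enters only through $\Vert\Bu\Vert_{H^{1}(\Omega)}$ and the exact field only through $\Vert\nabla\times\Bb\Vert_{L^{2}(\Omega)}$. Throughout I will use that $e^{\Bb}$ satisfies (\ref{assumption_div_free}), so Theorem~\ref{theorem_stability} gives $\Vert e^{\Bb}\Vert_{L^{3}(\Omega)}\le C\Vert e^{\Bb}\Vert_{C}$, together with the discrete Sobolev embedding $\Vert\Bv\Vert_{L^{6}(\Omega)}\le C\Vert\Bv\Vert_{V}$ from \cite[Theorem~$5.3$]{DiPietroErn}. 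For the first piece I note that $\Bb\in H(\text{curl};\Omega)$ with $\Bn\times\Bb=\boldsymbol0$ on $\partial\Omega$, hence $\lb\Bb\rb_{T}=\boldsymbol0$ on every face and the face contribution of $C_{h}(e^{\Bb};e^{\Bu},\Bb)$ vanishes. Only the volume term $\Sigma_{K\in\Ct_{h}}(\kappa(e^{\Bu}\times e^{\Bb}),\nabla\times\Bb)_{K}$ survives, and generalized H\"older with exponents $(6,3,2)$ together with the two embeddings yields
\[
-\,C_{h}(e^{\Bb};e^{\Bu},\Bb)\le C\kappa\,\Vert e^{\Bu}\Vert_{L^{6}(\Omega)}\,\Vert e^{\Bb}\Vert_{L^{3}(\Omega)}\,\Vert\nabla\times\Bb\Vert_{L^{2}(\Omega)}\le C\kappa\,\Vert\nabla\times\Bb\Vert_{L^{2}(\Omega)}\,\Vert e^{\Bu}\Vert_{V}\,\Vert e^{\Bb}\Vert_{C},
\]
which is exactly the second term on the right of (\ref{bound_T8}).

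The second piece is the heart of the matter, and the face term hidden inside $C_{h}(e^{\Bb};\Bu,e^{\Bb})$ is the main obstacle: a direct H\"older estimate of $\Sigma_{F\in\Cf_{h}^{I}}\langle\kappa\Lb\Bu\times e^{\Bb}\Rb,\lb e^{\Bb}\rb_{T}\rangle_{F}$ forces the trace of $\Bu$ into $L^{6}(F)$, which is not controlled by $\Vert\Bu\Vert_{H^{1}(\Omega)}$ alone (the scaling produces an uncontrollable $h^{-1}\Vert\Bu\Vert_{L^{2}}$ term). To circumvent this I would split $\Bu=\boldsymbol{\Pi}_{V}\Bu+(\Bu-\boldsymbol{\Pi}_{V}\Bu)$, with $\boldsymbol{\Pi}_{V}$ the $L^{2}$-projection onto $\BV_{h}$, and write $C_{h}(e^{\Bb};\Bu,e^{\Bb})=C_{h}(e^{\Bb};\boldsymbol{\Pi}_{V}\Bu,e^{\Bb})+C_{h}(e^{\Bb};\Bu-\boldsymbol{\Pi}_{V}\Bu,e^{\Bb})$. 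For the polynomial part $\boldsymbol{\Pi}_{V}\Bu\in\BV_{h}$, Lemma~\ref{lemma_Oh_cont} applies directly (since $e^{\Bb}$ satisfies (\ref{assumption_div_free})) and gives $C_{h}(e^{\Bb};\boldsymbol{\Pi}_{V}\Bu,e^{\Bb})\le C\kappa\Vert e^{\Bb}\Vert_{C}\Vert\boldsymbol{\Pi}_{V}\Bu\Vert_{V}\Vert e^{\Bb}\Vert_{C}$; the standard stability bound $\Vert\boldsymbol{\Pi}_{V}\Bu\Vert_{V}\le C\Vert\Bu\Vert_{H^{1}(\Omega)}$ (using $H^{1}$-stability of $\boldsymbol{\Pi}_{V}$ and, since $\lb\Bu\rb=\boldsymbol0$, the approximation bound for the jump of $\boldsymbol{\Pi}_{V}\Bu-\Bu$) then produces $C\kappa\Vert\Bu\Vert_{H^{1}(\Omega)}\Vert e^{\Bb}\Vert_{C}^{2}$.

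It remains to estimate the approximation remainder $C_{h}(e^{\Bb};\Bu-\boldsymbol{\Pi}_{V}\Bu,e^{\Bb})$ directly. Its volume term is bounded via H\"older with exponents $(6,3,2)$ by $C\kappa\Vert\Bu-\boldsymbol{\Pi}_{V}\Bu\Vert_{L^{6}(\Omega)}\Vert e^{\Bb}\Vert_{L^{3}(\Omega)}\Vert\nabla\times e^{\Bb}\Vert_{L^{2}(\Omega)}\le C\kappa\Vert\Bu\Vert_{H^{1}(\Omega)}\Vert e^{\Bb}\Vert_{C}^{2}$, using $\Vert\Bu-\boldsymbol{\Pi}_{V}\Bu\Vert_{L^{6}(\Omega)}\le C\Vert\Bu\Vert_{H^{1}(\Omega)}$. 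For its face term I would exploit the gain from the projection exactly as in the estimate of $T_{71}$: on each $K$ bound $\Bu-\boldsymbol{\Pi}_{V}\Bu$ in $L^{2}(\partial K)$, the factor $e^{\Bb}$ in $L^{\infty}(\partial K)$ through the inverse estimate $\Vert e^{\Bb}\Vert_{L^{\infty}(K)}\le Ch^{-1}\Vert e^{\Bb}\Vert_{L^{3}(K)}$, and $\lb e^{\Bb}\rb_{T}$ in $L^{2}(\partial K)$; the powers of $h$ then balance to $h^{0}$, and the approximation-error trace estimate $\Vert h^{-1/2}(\Bu-\boldsymbol{\Pi}_{V}\Bu)\Vert_{L^{2}(\Cf_{h})}\le C\Vert\Bu\Vert_{H^{1}(\Omega)}$ replaces the forbidden $h^{-1}\Vert\Bu\Vert_{L^{2}}$ by the benign $\Vert\nabla\Bu\Vert_{L^{2}(\Omega)}$. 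A H\"older sum over the elements together with Theorem~\ref{theorem_stability} then gives $C\kappa\Vert\Bu\Vert_{H^{1}(\Omega)}\Vert e^{\Bb}\Vert_{L^{3}(\Omega)}\Vert e^{\Bb}\Vert_{C}\le C\kappa\Vert\Bu\Vert_{H^{1}(\Omega)}\Vert e^{\Bb}\Vert_{C}^{2}$. Adding the three contributions produces the first term of (\ref{bound_T8}), and combining with the first piece completes the bound. The step I expect to require the most care is precisely this balancing of $h$-powers in the remainder face term, which is what makes the splitting through $\boldsymbol{\Pi}_{V}$ essential rather than cosmetic: the extra power of $h$ carried by the approximation error $\Bu-\boldsymbol{\Pi}_{V}\Bu$ is exactly what is needed to absorb the $h^{-1}$ coming from the inverse estimate on the polynomial $e^{\Bb}$ while keeping $\Bu$ at the level of $\Vert\Bu\Vert_{H^{1}(\Omega)}$.
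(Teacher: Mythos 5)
Your proposal is correct and follows essentially the same route as the paper: both kill the face term containing $\lb \Bb\rb_{T}$, estimate the two volume terms by the generalized H\"older inequality with exponents $(6,3,2)$ together with Theorem~\ref{theorem_stability} and the discrete embedding $\Vert\cdot\Vert_{L^{6}}\lesssim\Vert\cdot\Vert_{V}$, and handle the problematic face term by inserting $\boldsymbol{\Pi}_{V}\Bu$ and trading the inverse estimate $h\Vert e^{\Bb}\Vert_{L^{\infty}(K)}\leq C\Vert e^{\Bb}\Vert_{L^{3}(K)}$ against the trace approximation error of $\Bu-\boldsymbol{\Pi}_{V}\Bu$. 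The only (harmless) difference is organizational: you apply the $\boldsymbol{\Pi}_{V}$ splitting to the whole form $C_{h}(e^{\Bb};\Bu,e^{\Bb})$ and invoke Lemma~\ref{lemma_Oh_cont} for the polynomial part, whereas the paper splits only inside the face term $T_{81}$ and bounds the volume term $T_{83}$ directly.
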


\begin{proof}
We recall that 
\begin{align*}
T_{8} = & -  C_{h}( e^{\Bb}; e^{\Bu}, \Bb) + C_{h} (e^{\Bb}; \Bu, e^{\Bb}) \\
= & -\Sigma_{K\in\Ct_{h}}  (\kappa(e^{\Bu}\times e^{\Bb}), \nabla \times \Bb)_{K} 
+ \Sigma_{F\in \Cf_{h}^{I}} \langle \kappa \Lb e^{\Bu}\times e^{\Bb}\Rb, \lb \Bb\rb_{T}\rangle_{F}\\
&\quad + \Sigma_{K\in\Ct_{h}}  (\kappa(\Bu \times e^{\Bb}), \nabla \times e^{\Bb})_{K} 
- \Sigma_{F\in \Cf_{h}^{I}} \langle \kappa \Lb \Bu \times e^{\Bb}\Rb, \lb e^{\Bb}\rb_{T}\rangle_{F}.
\end{align*}
Since $\lb \Bb\rb_{T} = 0$ on $\Cf_{h}^{I}$, we have that 
\begin{align*}
T_{8} = & - \Sigma_{F\in \Cf_{h}^{I}} \langle \kappa \Lb \Bu \times e^{\Bb}\Rb, \lb e^{\Bb}\rb_{T}\rangle_{F}\\
& \quad  -\Sigma_{K\in\Ct_{h}}  (\kappa(e^{\Bu}\times e^{\Bb}), \nabla \times \Bb)_{K} 
+ \Sigma_{K\in\Ct_{h}}  (\kappa(\Bu \times e^{\Bb}), \nabla \times e^{\Bb})_{K}\\
 := & T_{81} + T_{82} + T_{83}. 
\end{align*} 

Obviously, 
\begin{align*}
T_{81} = & - \Sigma_{K \in \Ct_{h}} \langle \kappa (\Bu\times e^{\Bb}), \lb e^{\Bb}\rb_{T} \Vert \rangle_{\partial K \cap \Cf_{h}^{I}} \\
= & - \Sigma_{K \in \Ct_{h}} \langle \kappa ( \boldsymbol{\Pi}_{V}\Bu\times e^{\Bb}), \lb e^{\Bb}\rb_{T}
\rangle_{\partial K \cap \Cf_{h}^{I}}  
+ \Sigma_{K \in \Ct_{h}} \langle \kappa  (\boldsymbol{\Pi}_{V}\Bu - \Bu)\times e^{\Bb}, \lb e^{\Bb}\rb_{T} 
 \rangle_{\partial K \cap \Cf_{h}^{I}}. 
\end{align*}
By scaling argument, it is easy to see that $\Vert \boldsymbol{\Pi}_{V} \Bu\Vert_{L^{6}(\Omega)}
\leq C \Vert \Bu\Vert_{L^{6}(\Omega)} \leq C \Vert \Bu\Vert_{H^{1}(\Omega)}$. So by discrete trace inequality and (\ref{bound_eb}), 
\begin{align*}
& - \Sigma_{K \in \Ct_{h}} \langle \kappa ( \boldsymbol{\Pi}_{V}\Bu\times e^{\Bb}), \lb e^{\Bb}\rb_{T}
\rangle_{\partial K \cap \Cf_{h}^{I}} \\
\leq & C \kappa \Vert \boldsymbol{\Pi}_{V} \Bu \Vert_{L^{6}(\Omega)} \Vert e^{\Bb}\Vert_{L^{3}(\Omega)}  
\Vert h^{-\frac{1}{2}} \lb e^{\Bb} \rb \Vert_{L^{2}(\Cf_{h})}
\leq  C \kappa \Vert \Bu\Vert_{H^{1}(\Omega)} \Vert e^{\Bb}\Vert_{C}^{2}. 
\end{align*}
On the other hand, by scaling argument and discrete trace inequality to $e^{\Bb}$, we get that 
\begin{align*}
& \Sigma_{K \in \Ct_{h}} \langle \kappa  (\boldsymbol{\Pi}_{V}\Bu - \Bu)\times e^{\Bb}, \lb e^{\Bb}\rb_{T} 
\rangle_{\partial K \cap \Cf_{h}^{I}} \\ 
\leq & C \kappa \Sigma_{K \in \Ct_{h}} \Vert h^{-\frac{1}{2}}(\boldsymbol{\Pi}_{V}\Bu - \Bu)\Vert_{L^{2}(\partial K)} 
h\Vert  e^{\Bb} \Vert_{L^{\infty}(\partial K)} \Vert h^{-\frac{1}{2}} \lb e^{\Bb} \rb_{T} \Vert_{L^{2}(\partial K)}\\
\leq & C \kappa \Sigma_{K \in \Ct_{h}} \Vert  e^{\Bb} \Vert_{L^{3}( K)}
\Vert h^{-\frac{1}{2}}(\boldsymbol{\Pi}_{V}\Bu - \Bu)\Vert_{L^{2}(\partial K)} 
 \Vert h^{-\frac{1}{2}} \lb e^{\Bb} \rb_{T} \Vert_{L^{2}(\partial K)}\\
\leq & C \kappa \Vert e^{\Bb} \Vert_{L^{3}(\Omega)} 
\Vert h^{-\frac{1}{2}}(\boldsymbol{\Pi}_{V}\Bu - \Bu )\Vert_{L^{2}(\Cf_{h})}
\Vert h^{-\frac{1}{2}} \lb e^{\Bb} \rb_{T} \Vert_{L^{2}(\Cf_{h})}\\
\leq & C \kappa \Vert e^{\Bb} \Vert_{L^{3}(\Omega)} 
\big(\Vert h^{-1}(\boldsymbol{\Pi}_{V}\Bu - \Bu )\Vert_{L^{2}(\Omega)} 
+ \Vert \boldsymbol{\Pi}_{V}\Bu - \Bu \Vert_{H^{1}(\Ct_{h})} \big)
\Vert h^{-\frac{1}{2}} \lb e^{\Bb} \rb_{T} \Vert_{L^{2}(\Cf_{h})}\\
\leq & C \kappa \Vert \Bu\Vert_{H^{1}(\Omega)} \Vert e^{\Bb}\Vert_{C}^{2}.  
\end{align*}
We utilized (\ref{bound_eb}) in the last inequality above. Thus we get that 
\begin{align*}
T_{81} \leq C \kappa \Vert \Bu \Vert_{H^{1}(\Omega)} \Vert e^{\Bb}\Vert_{C}^{2}. 
\end{align*}

By generalized H{\"{o}}lder's inequality, Sobolev imbedding, \cite[Theorem~$5.3$]{DiPietroErn} and 
(\ref{bound_eb}), we get that
\begin{align*}
T_{82} \leq & C \kappa \Vert \nabla\times \Bb\Vert_{L^{2}(\Omega)} \Vert e^{\Bu} 
\Vert_{L^{6}(\Omega)} \Vert e^{\Bb} \Vert_{L^{3}(\Omega)} 
\leq C \kappa \Vert \nabla\times \Bb\Vert_{L^{2}(\Omega)} \Vert e^{\Bu} 
\Vert_{V} \Vert e^{\Bb} \Vert_{C},\\ 
T_{83} \leq & C \kappa \Vert \Bu\Vert_{L^{6}(\Omega)} \Vert e^{\Bb}\Vert_{L^{3}(\Omega)} 
\Vert \nabla\times e^{\Bb} \Vert_{L^{2}(\Omega)} 
\leq C \kappa \Vert \Bu\Vert_{H^{1}(\Omega)} \Vert e^{\Bb}\Vert_{C}^{2}. 
\end{align*}

Thus we obtain (\ref{bound_T8}). 
\end{proof}

\begin{lemma}
\begin{align}
\label{bound_T9}
T_{9} \leq & C \kappa h^{\min (k, \sigma_{m})} \Vert \Bb\Vert_{H^{\sigma_{m}}(\Omega)} 
\big( \Vert \nabla\times \Bb\Vert_{H^{\sigma_{m}}(\Omega)} \Vert e^{\Bu} \Vert_{V} 
+ \Vert \Bu\Vert_{H^{1+\sigma}(\Omega)} \Vert e^{\Bb}\Vert_{C} \big).
\end{align}
\end{lemma}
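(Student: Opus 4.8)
The plan is to expand $T_{9}=C_{h}(\boldsymbol{\Pi}_{C}\Bb-\Bb;e^{\Bu},\Bb)-C_{h}(\boldsymbol{\Pi}_{C}\Bb-\Bb;\Bu,e^{\Bb})$ by the definition of the coupling form, and to treat the two summands separately while exploiting that both carry the same first argument $\Bd=\boldsymbol{\Pi}_{C}\Bb-\Bb$. The single approximation ingredient that feeds every estimate below is the bound $\Vert\boldsymbol{\Pi}_{C}\Bb-\Bb\Vert_{L^{3}(\Omega)}\leq Ch^{\min(k,\sigma_{m})}\Vert\Bb\Vert_{H^{\sigma_{m}}(\Omega)}$ together with its trace analogue $\Vert\boldsymbol{\Pi}_{C}\Bb-\Bb\Vert_{L^{2}(\Cf_{h})}\leq Ch^{\min(k,\sigma_{m})-1/2}\Vert\Bb\Vert_{H^{\sigma_{m}}(\Omega)}$. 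First I would establish these by writing $\boldsymbol{\Pi}_{C}\Bb-\Bb=(\boldsymbol{\Pi}_{C}\Bb-\boldsymbol{\Pi}_{V}\Bb)+(\boldsymbol{\Pi}_{V}\Bb-\Bb)$: the piecewise-polynomial part is handled by an inverse estimate (from $L^{2}$ to $L^{3}$ this costs $h^{-1/2}$ in three dimensions) after its $L^{2}$ norm is controlled through (\ref{proj_C_prop2}) and the $L^{2}$-optimality of $\boldsymbol{\Pi}_{V}$, while $\boldsymbol{\Pi}_{V}\Bb-\Bb$ is bounded by the standard $L^{2}$-projection approximation estimates.

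For the first summand the crucial observation is that its third argument is the exact magnetic field $\Bb\in H(\text{curl};\Omega)$, so $\lb\Bb\rb_{T}=\boldsymbol{0}$ on every interior face and the whole face contribution of $C_{h}$ drops out. Only the volume term $\Sigma_{K\in\Ct_{h}}(\kappa(e^{\Bu}\times(\boldsymbol{\Pi}_{C}\Bb-\Bb)),\nabla\times\Bb)_{K}$ survives; I would bound it by the generalized H\"older inequality with exponents $(6,3,2)$, then invoke the discrete Sobolev embedding $\Vert e^{\Bu}\Vert_{L^{6}(\Omega)}\leq C\Vert e^{\Bu}\Vert_{V}$ from \cite[Theorem~$5.3$]{DiPietroErn}, the $L^{3}$ bound above, and $\Vert\nabla\times\Bb\Vert_{L^{2}(\Omega)}\leq\Vert\nabla\times\Bb\Vert_{H^{\sigma_{m}}(\Omega)}$. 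This produces exactly the first half of (\ref{bound_T9}).

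For the second summand $-C_{h}(\boldsymbol{\Pi}_{C}\Bb-\Bb;\Bu,e^{\Bb})$ the third argument is the discrete error $e^{\Bb}$, whose tangential jumps do not vanish, so both the volume term and the face term remain. The volume term is treated exactly as above, giving $C\kappa\Vert\Bu\Vert_{H^{1}(\Omega)}h^{\min(k,\sigma_{m})}\Vert\Bb\Vert_{H^{\sigma_{m}}(\Omega)}\Vert e^{\Bb}\Vert_{C}$ with $\Vert\Bu\Vert_{H^{1}(\Omega)}\leq\Vert\Bu\Vert_{H^{1+\sigma}(\Omega)}$. The face term $\Sigma_{F\in\Cf_{h}^{I}}\langle\kappa\Lb\Bu\times(\boldsymbol{\Pi}_{C}\Bb-\Bb)\Rb,\lb e^{\Bb}\rb_{T}\rangle_{F}$ is where the index $H^{1+\sigma}$ (rather than $H^{1}$) enters: I would estimate it face-by-face by pulling $\Bu$ out in $L^{\infty}$, which is legitimate precisely because $1+\sigma>3/2$ yields the embedding $H^{1+\sigma}(\Omega)\hookrightarrow L^{\infty}(\Omega)$ in three dimensions, so $\Vert\Bu\Vert_{L^{\infty}(\Omega)}\leq C\Vert\Bu\Vert_{H^{1+\sigma}(\Omega)}$. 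The remaining factors $\Vert\boldsymbol{\Pi}_{C}\Bb-\Bb\Vert_{L^{2}(F)}$ and $\Vert\lb e^{\Bb}\rb_{T}\Vert_{L^{2}(F)}$ are summed over faces by Cauchy-Schwarz; writing $\Vert\lb e^{\Bb}\rb_{T}\Vert_{L^{2}(\Cf_{h})}\leq h^{1/2}\Vert h^{-1/2}\lb e^{\Bb}\rb_{T}\Vert_{L^{2}(\Cf_{h})}\leq h^{1/2}\Vert e^{\Bb}\Vert_{C}$, the spare $h^{1/2}$ combines with the trace estimate for $\boldsymbol{\Pi}_{C}\Bb-\Bb$ to produce the full power $h^{\min(k,\sigma_{m})}$. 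Collecting the volume and face contributions gives the second half of (\ref{bound_T9}).

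The only genuinely delicate point is this surviving face term of the second summand. Unlike the analysis of $T_{7}$ — where the smooth factor was a projection error and the magnetic factor $\Bb_{h}$ could be placed in $L^{3}$ via an inverse estimate on a polynomial — here the only factor carrying an approximation power is $\boldsymbol{\Pi}_{C}\Bb-\Bb$, which is not a polynomial, so the $L^{\infty}$ bound is forced onto $\Bu$; this is what triggers the three-dimensional Sobolev embedding and explains the appearance of $\Vert\Bu\Vert_{H^{1+\sigma}(\Omega)}$. The bookkeeping of the $h$-powers (the interplay of the $h^{-1/2}$ from the inverse estimate, the $h^{1/2}$ trace rescalings, and the raw approximation order) is then routine once the $L^{3}$ and trace estimates for $\boldsymbol{\Pi}_{C}\Bb-\Bb$ are in hand.
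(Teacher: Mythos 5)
Your decomposition of $T_{9}$ is the same as the paper's: the face term of the first summand dies because $\lb \Bb\rb_{T}=\boldsymbol{0}$ on interior faces, leaving the two volume terms plus the one surviving face term, and your treatment of that face term (pull $\Bu$ out in $L^{\infty}$ via $H^{1+\sigma}\hookrightarrow L^{\infty}$, split $\boldsymbol{\Pi}_{C}\Bb-\Bb$ through $\boldsymbol{\Pi}_{V}\Bb$, trade the spare $h^{1/2}$ against $\Vert h^{-1/2}\lb e^{\Bb}\rb_{T}\Vert_{L^{2}(\Cf_{h})}$) matches the paper's estimate of $T_{91}$. The gap is in the volume terms, and it sits exactly in the ``single approximation ingredient'' you lead with: the bound $\Vert\boldsymbol{\Pi}_{C}\Bb-\Bb\Vert_{L^{3}(\Omega)}\leq Ch^{\min(k,\sigma_{m})}\Vert\Bb\Vert_{H^{\sigma_{m}}(\Omega)}$ does not hold at that rate under the minimal regularity $\sigma_{m}>\tfrac12$, and your own derivation does not produce it. The inverse estimate from $L^{2}$ to $L^{3}$ costs $h^{-1/2}$, as you yourself note, so the polynomial part $\boldsymbol{\Pi}_{C}\Bb-\boldsymbol{\Pi}_{V}\Bb$ only comes out at order $h^{\min(k,\sigma_{m})-1/2}$; likewise the Bramble--Hilbert scaling for the $L^{3}$-approximation of a function that is merely in $H^{\sigma_{m}}$ gives $\Vert\boldsymbol{\Pi}_{V}\Bb-\Bb\Vert_{L^{3}(K)}\leq Ch_{K}^{\sigma_{m}-1/2}\vert\Bb\vert_{H^{\sigma_{m}}(K)}$, again half an order short. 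Consequently your H\"older split $(6,3,2)$ for the first volume term, and the analogous split (with $\Bu$ in $L^{6}$) for the volume part of the second summand, each lose $h^{1/2}$ and do not yield (\ref{bound_T9}).

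The repair — which is what the paper actually does — is to keep $\boldsymbol{\Pi}_{C}\Bb-\Bb$ in $L^{2}$, where (\ref{proj_C_prop2}) and the N\'ed\'elec approximation deliver the full rate $h^{\min(k,\sigma_{m})}$, and to push the extra integrability onto the genuinely smooth factors: for $T_{92}$ use $\Vert e^{\Bu}\Vert_{L^{6}(\Omega)}\Vert\boldsymbol{\Pi}_{C}\Bb-\Bb\Vert_{L^{2}(\Omega)}\Vert\nabla\times\Bb\Vert_{L^{3}(\Omega)}$ with $\nabla\times\Bb\in H^{\sigma_{m}}(\Omega)\hookrightarrow L^{3}(\Omega)$ — this is precisely where the factor $\Vert\nabla\times\Bb\Vert_{H^{\sigma_{m}}(\Omega)}$ in (\ref{bound_T9}) comes from, whereas your version only ever invokes $\Vert\nabla\times\Bb\Vert_{L^{2}(\Omega)}$, a sign the exponents are allocated the wrong way; for $T_{93}$ use $\Vert\Bu\Vert_{L^{\infty}(\Omega)}\Vert\boldsymbol{\Pi}_{C}\Bb-\Bb\Vert_{L^{2}(\Omega)}\Vert\nabla\times e^{\Bb}\Vert_{L^{2}(\Omega)}$. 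With that reallocation (and the $L^{\infty}$ embedding you already invoke) the remainder of your argument goes through.
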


\begin{proof}
We recall that 
\begin{align*}
T_{9} = & C_{h}(\boldsymbol{\Pi}_{C}\Bb - \Bb; e^{\Bu}, \Bb) - C_{h} (\boldsymbol{\Pi}_{C}\Bb - \Bb; \Bu, e^{\Bb}) \\
 = & \Sigma_{K\in\Ct_{h}}  (\kappa(e^{\Bu}\times (\boldsymbol{\Pi}_{C}\Bb - \Bb)), \nabla \times \Bb)_{K} 
- \Sigma_{F\in \Cf_{h}^{I}} \langle \kappa \Lb e^{\Bu} \times (\boldsymbol{\Pi}_{C}\Bb - \Bb) \Rb, \lb \Bb\rb_{T}\rangle_{F}\\
& \quad - \Sigma_{K\in\Ct_{h}}  (\kappa(\Bu\times (\boldsymbol{\Pi}_{C}\Bb - \Bb)), \nabla \times e^{\Bb})_{K} 
+ \Sigma_{F\in \Cf_{h}^{I}} \langle \kappa \Lb \Bu\times (\boldsymbol{\Pi}_{C}\Bb - \Bb)\Rb, \lb e^{\Bb} \rb_{T}\rangle_{F}.
\end{align*}
Since $\lb \Bb \rb_{T} = \boldsymbol{0}$ on $\Cf_{h}^{I}$, we have that 
\begin{align*}
T_{9} =  &\Sigma_{F\in \Cf_{h}^{I}} \langle \kappa \Lb \Bu\times (\boldsymbol{\Pi}_{C}\Bb - \Bb)\Rb, \lb e^{\Bb} \rb_{T}\rangle_{F}\\
& \quad +\Sigma_{K\in\Ct_{h}}  (\kappa(e^{\Bu}\times (\boldsymbol{\Pi}_{C}\Bb - \Bb)), \nabla \times \Bb)_{K}  
- \Sigma_{K\in\Ct_{h}}  (\kappa(\Bu\times (\boldsymbol{\Pi}_{C}\Bb - \Bb)), \nabla \times e^{\Bb})_{K}\\
:= & T_{91} + T_{92} + T_{93}.
\end{align*}

By discrete trace inequality and (\ref{proj_C_prop2}), 
\begin{align*}
T_{91} = & \Sigma_{F\in \Cf_{h}^{I}} \langle \kappa \Lb \Bu\times (\boldsymbol{\Pi}_{C}\Bb - \boldsymbol{\Pi}_{V}\Bb)\Rb, 
\lb e^{\Bb} \rb_{T}\rangle_{F} 
+ \Sigma_{F\in \Cf_{h}^{I}} \langle \kappa \Lb \Bu\times (\boldsymbol{\Pi}_{V}\Bb - \Bb)\Rb, \lb e^{\Bb} \rb_{T}\rangle_{F}\\
\leq & C \kappa \Vert \Bu\Vert_{L^{\infty}(\Omega)} \big( \Vert h^{1/2}
 (\boldsymbol{\Pi}_{C}\Bb - \boldsymbol{\Pi}_{V}\Bb)\Vert_{L^{2}(\Cf_{h})} 
 + \Vert h^{\frac{1}{2}} (\boldsymbol{\Pi}_{V}\Bb - \Bb) \Vert_{L^{2}(\Ce_{h})}\big) 
 \Vert h^{-\frac{1}{2}}\lb e^{\Bb} \rb_{T} \Vert_{L^{2}(\Cf_{h})}\\
 \leq & C \kappa \Vert \Bu\Vert_{L^{\infty}(\Omega)} \big( \Vert 
 \boldsymbol{\Pi}_{C}\Bb - \boldsymbol{\Pi}_{V}\Bb \Vert_{L^{2}(\Omega)} 
 + \Vert h^{\frac{1}{2}} (\boldsymbol{\Pi}_{V}\Bb - \Bb) \Vert_{L^{2}(\Cf_{h})}\big) 
 \Vert h^{-\frac{1}{2}}\lb e^{\Bb} \rb_{T} \Vert_{L^{2}(\Cf_{h})}\\
 \leq & C \kappa h^{\min(k,\sigma_{m})} \Vert \Bu\Vert_{L^{\infty}(\Omega)} \Vert \Bb\Vert_{H^{\sigma_{m}}(\Omega)}
  \Vert h^{-\frac{1}{2}}\lb e^{\Bb} \rb_{T} \Vert_{L^{2}(\Cf_{h})}.
\end{align*}

By (\ref{proj_C_prop3}), it is east to see that 
\begin{align*}
& T_{92}+ T_{93} \\
\leq & C \kappa \big( \Vert e^{\Bu}\Vert_{L^{6}(\Omega)} \Vert \boldsymbol{\Pi}_{C}\Bb - \Bb \Vert_{L^{2}(\Omega)} 
\Vert \nabla\times \Bb\Vert_{L^{3}(\Omega)} + \Vert \Bu\Vert_{L^{\infty}(\Omega)} 
\Vert \boldsymbol{\Pi}_{C}\Bb - \Bb\Vert_{L^{2}(\Omega)} \Vert \nabla\times e^{\Bb} \Vert_{L^{2}(\Omega)}\big) \\
\leq & C \kappa \big( \Vert e^{\Bu}\Vert_{V} \Vert \boldsymbol{\Pi}_{C}\Bb - \Bb \Vert_{L^{2}(\Omega)} 
\Vert \nabla\times \Bb\Vert_{H^{\sigma_{m}}(\Omega)} + \Vert \Bu\Vert_{L^{\infty}(\Omega)} 
\Vert \boldsymbol{\Pi}_{C}\Bb - \Bb\Vert_{L^{2}(\Omega)} \Vert e^{\Bb} \Vert_{C}\big) \\
\leq & C \kappa h^{\min (k, \sigma_{m})} \Vert \Bb\Vert_{H^{\sigma_{m}}(\Omega)} 
\big( \Vert \nabla\times \Bb\Vert_{H^{\sigma_{m}}(\Omega)} \Vert e^{\Bu} \Vert_{V} 
+ \Vert \Bu\Vert_{L^{\infty}(\Omega)} \Vert e^{\Bb}\Vert_{C} \big).
\end{align*}

Thus we get that 
\begin{align*}
T_{9} \leq & C \kappa h^{\min (k, \sigma_{m})} \Vert \Bb\Vert_{H^{\sigma_{m}}(\Omega)} 
\big(  \Vert \Bu\Vert_{L^{\infty}(\Omega)}   \Vert h^{-\frac{1}{2}}\lb e^{\Bb} \rb_{T} \Vert_{L^{2}(\Cf_{h})} \\
&\qquad +\Vert \nabla\times \Bb\Vert_{H^{\sigma_{m}}(\Omega)} \Vert e^{\Bu} \Vert_{V} 
+ \Vert \Bu\Vert_{L^{\infty}(\Omega)} \Vert e^{\Bb}\Vert_{C} \big).
\end{align*} 
Since $\Vert \Bu\Vert_{L^{\infty}(\Omega)} \leq C \Vert \Bu \Vert_{H^{1+\sigma}(\Omega)}$, we obtain (\ref{bound_T9}).
\end{proof}


Finally, the estimate of $\Bu - \Bu_h, \Bb - \Bb_h$ in Theorem \ref{errors_ub} can be obtained by combining the estimates for $T_1 - T_9$ together with Theorem \ref{theorem_wellposed} and the assumption that $\dfrac{1}{\min(\nu, \nu_{m})}\Vert \Bu\Vert_{H^{1}(\Omega)}$ and 
$\dfrac{1}{\sqrt{\nu \kappa \nu_{m}}}\Vert \nabla\times \Bb \Vert_{L^{2}(\Omega)}$ are small enough. 

\subsection*{Step 5: Estimates for $r - r_h$}

By a triangle inequality we have
\[
\|r - r_h\|_S \le \|e^r\|_S + \|r - \Pi_S r\|_S,
\]
with the approximation property of the interpolant we have
\[
\|r - \Pi_S r\|_S \le C h^{\text{min}\{k+1, \sigma_m+1\}}\|r\|_{H^{\sigma_m + 1}(\Omega)}.
\]
Therefore it suffice to bound $\|e^r\|_S$. To this end, by \eqref{er_tilde}, considering $\tilde{e}^r \in H^1_0(\Omega) \cap S_h$ we have
\begin{equation}\label{estimate_r1}
\|\tilde{e}^r\|_S = \|\nabla \tilde{e}^r\|_{L^2(\Ct_h)} \le \|e^r\|_S + \|e^r - \tilde{e}^r\|_S \le \|\nabla e^r\|_{L^2(\Ct_h)} + C \Vert h^{-\frac{1}{2}} \lb e^{r}\rb \Vert_{L^{2}(\Cf_{h})}.
\end{equation}
The last term is in the lower bound in Lemma \ref{lower_bound} which has the same estimate as $\Bu - \Bu_h$ in Theorem \ref{errors_ub}. For $\|\nabla e^r\|_{L^2(\Ct_h)}$, taking $\Bc = \nabla \tilde{e}^r$ in the error equation \eqref{error_eq2} we have:
\begin{align}\label{error_r2}
M_{h}(e^{\Bb}, \nabla \tilde{e}^r) + D_{h}(\nabla \tilde{e}^r, e^{r}) = & M_{h}(\boldsymbol{\Pi}_{C}\Bb - \Bb, \nabla \tilde{e}^r) 
+ (\nabla \tilde{e}^r, \nabla (\Pi_{S}r -r) )_{\Omega}\\ 
\nonumber
&\quad + C_{h}(\Bb; \Bu, \nabla \tilde{e}^r) - C_{h}(\Bb_{h}; \Bu_{h}, \nabla \tilde{e}^r).
\end{align}
By the definition of $D_h(\cdot, \cdot)$ and the fact that $\tilde{e}^r \in H^1_0(\Omega) \cap S_h$ we have:
\begin{align*}
D_h(\nabla \tilde{e}^r, e^r) &= \Sigma_{K \in \Ct_h} (\nabla \tilde{e}^r, \nabla e^r)_K - \Sigma_{F \in \Cf_h}\langle \Lb \nabla \tilde{e}^r \Rb \, , \, \lb e^r \rb \rangle_F, \\
& = \|\nabla e^r\|^2_{L^2(\Ct_h)} + \Sigma_{K \in \Ct_h} (\nabla (\tilde{e}^r - e^r), \nabla e^r)_K - \Sigma_{F \in \Cf_h}\langle \Lb \nabla \tilde{e}^r \Rb \, , \, \lb e^r \rb \rangle_F.
\end{align*}
On the other hand, due to the fact that $\tilde{e}^r \in H^1_0(\Omega) \cap S_h$, with the definition of $C_h(\cdot; \cdot, \cdot)$ we have
\[
 C_{h}(\Bb; \Bu, \nabla \tilde{e}^r) = C_{h}(\Bb_{h}; \Bu_{h}, \nabla \tilde{e}^r) = 0.
\]
Inserting above two identities into \eqref{error_r2}, rearranging terms we have
\begin{align*}
\|\nabla e^r\|^2_{L^2(\Ct_h)} &=  M_{h}(\Bb_h - \Bb, \nabla \tilde{e}^r) 
+ (\nabla \tilde{e}^r, \nabla (\Pi_{S}r -r) )_{\Omega} \\
& -\Sigma_{K \in \Ct_h} (\nabla (\tilde{e}^r - e^r), \nabla e^r)_K - \Sigma_{F \in \Cf_h}\langle \Lb \nabla \tilde{e}^r \Rb \, , \, \lb e^r \rb \rangle_F \\
\intertext{by the definition of $M_h(\cdot, \cdot)$ and $\tilde{e}^r \in H^1_0(\Omega) \cap S_h$, we have $M_{h}(\Bb_h - \Bb, \nabla \tilde{e}^r) = 0$. For the rest terms we apply Cauchy Schwarz inequality to obtain:}
\|\nabla e^r\|^2_{L^2(\Ct_h)} &\le \|\nabla \tilde{e}^r\|_{L^2(\Omega)} \|\nabla (\Pi_{S}r -r)  \|_{L^2(\Omega)} + \|\nabla (\tilde{e}^r - e^r)\|_{L^2(\Ct_h)} \|\nabla e^r\|_{L^2(\Ct_h)} \\
&+ \|h^{\frac12} \Lb \nabla \tilde{e}^r \Rb\|_{L^2(\Cf_h)} \|h^{-\frac12} \lb e^r \rb\|_{L^2(\Cf_h)} \\
&\le C \|\nabla \tilde{e}^r\|_{L^2(\Ct_h)} (h^{\text{min}\{k+1, \sigma_m+1\}}\|r\|_{H^{\sigma_m + 1}(\Omega)} + \|h^{-\frac12} \lb e^r \rb\|_{L^2(\Cf_h)}),
\end{align*}
the last step is due to a discrete trace inequality and approximation property of $\Pi_S$. Finally the estimate is complete by \eqref{estimate_r1}, Lemma \ref{lower_bound} and estimates for $\Bu - \Bu_h, \Bb - \Bb_h$.

\subsection*{Step 6: Estimates for $p - p_h$} For the pressure we apply a standard $inf-sup$ argument, see \cite{BrezziFortin91, HSW09, CockburnShi12}. Since $e^p \in L^2(\Omega)$, there exists a $\Bw \in H^1_0(\Omega: \mathbb{R}^3)$ satisfies
\begin{equation}\label{infsup}
\|e^p\|_{L^2{\Omega}} \le C \frac{(e^p, \nabla \cdot \Bw)_{\Omega}}{\|\Bw\|_{H^1(\Omega)}}.
\end{equation}
It suffices to estimate the term on the right hand side. To this end, let $\boldsymbol{\Pi}^{\text{BDM}} \Bw$ denote the BDM projection of $\Bw$ into $\Bv_h \cap H(\text{div}; \Omega)$. Due to the orthogonal property of the BDM projection, we have
\begin{align*}
(e^p, \nabla \cdot \Bw)_\Omega &= (e^p, \nabla \cdot \boldsymbol{\Pi}^{\text{BDM}}\Bw)_{\Omega}+ (e^p, \nabla \cdot (\Bw - \boldsymbol{\Pi}^{\text{BDM}}\Bw))_{\Omega} \\
	& = (e^p, \nabla \cdot \boldsymbol{\Pi}^{\text{BDM}}\Bw)_{\Omega}.
\end{align*}
Notice that since $\boldsymbol{\Pi}^{\text{BDM}}\Bw \in H(\text{div}; \Omega) \rightarrow \lb \boldsymbol{\Pi}^{\text{BDM}}\Bw \rb_N = 0$ on $\Ce_h$, we have
\begin{align*}
B_h(\boldsymbol{\Pi}^{\text{BDM}}\Bw, e^p) &= - \Sigma_{K \in \Ct_h} (e^p, \nabla \cdot \boldsymbol{\Pi}^{\text{BDM}}\Bw)_{K} + \Sigma_{F\in\Cf_{h}} \langle \Lb e^p\Rb , \lb \boldsymbol{\Pi}^{\text{BDM}}\Bw \rb_{N}\rangle_{F} \\
&= - (e^p, \nabla \cdot \boldsymbol{\Pi}^{\text{BDM}}\Bw)_{\Omega}.
\end{align*}
Taking $\Bv = \boldsymbol{\Pi}^{\text{BDM}}\Bw$ in the error equation \eqref{error_eq1}, after rearranging terms, we arrive at:
\begin{align*}
(e^p, \nabla \cdot \boldsymbol{\Pi}^{\text{BDM}}\Bw)_{\Omega} & = A_h(\Bu -\Bu_h, \boldsymbol{\Pi}^{\text{BDM}}\Bw) \\
	& + \big(O_h(\Bu; \Bu, \boldsymbol{\Pi}^{\text{BDM}}\Bw)) - O_h(\mathbb{P}(\Bu_h, \Lb \Bu_h \Rb); \Bu_h, \boldsymbol{\Pi}^{\text{BDM}}\Bw)\big) \\
	& + \big( C_h(\Bb; \boldsymbol{\Pi}^{\text{BDM}}\Bw, \Bb) - C_h(\Bb_h; \boldsymbol{\Pi}^{\text{BDM}}\Bw, \Bb_h) \big) \\
	& := Y_1 + Y_2 + Y_3.
\end{align*}
For $Y_1$ we simply apply Cauchy-Schwarz inequality and discrete trace inequality to have
\begin{equation}\label{Y1}
Y_1 \le C \|\Bu - \Bu_h\|_V \|\boldsymbol{\Pi}^{\text{BDM}}\Bw\|_V.
\end{equation}
For $Y_2$, with a similar estimate as $T_6$ we can have
\begin{equation}\label{Y2}
Y_2 \le C \|\Bu - \Bu_h\|_V (\|\Bu\|_V + \|\Bu_h\|_V) \|\boldsymbol{\Pi}^{\text{BDM}}\Bw\|_V.
\end{equation}
For $Y_3$, we can apply similar estimates as for $T_7 - T_9$ in Lemma \ref{bound_T7} - Lemma \ref{bound_T9} to have
\begin{equation}\label{Y3}
Y_2 \le C \|\Bb - \Bb_h\|_C \|\boldsymbol{\Pi}^{\text{BDM}}\Bw\|_V (\|\Bb_h\|_C + \|\nabla \times \Bb\|_{H^{\sigma_m}(\Omega)}).
\end{equation}
Finally, it is well-known that we have
\begin{equation}\label{BDM}
\|\boldsymbol{\Pi}^{\text{BDM}}\Bw\|_V \le \|\Bw\|_{H^1(\Omega)}.
\end{equation}

If we combine the estimates \eqref{infsup} - \eqref{BDM}, we have
\[
\|e^p\|_{L^2(\Omega)} \le C \|\Bu - \Bu_h\|_V (1 + \|\Bu\|_V + \|\Bu_h\|_V) + C  \|\Bb - \Bb_h\|_C (\|\Bb_h\|_C + \|\nabla \times \Bb\|_{H^{\sigma_m}(\Omega)}).
\]
Finally, the estimate for $p - p_h$ is complete by the above estimate together with estimates for $\Bu - \Bu_h, \Bb - \Bb_h$ and Theorem \ref{theorem_wellposed}.

\section{A divergence-free HDG method for MHD}
In this section, we present the hybridizable discontinuous Galerkin (HDG) method for the MHD problem \eqref{mhd_eqs}. 
There are two main advantages comparing with the mixed DG method proposed in the previous sections: it provides exactly 
divergence-free velocity field. This feature makes HDG methods more robust in the sense that the convergence of the 
velocity and magnetic fields is independent of the pressure. Secondly, like all existing HDG methods for different 
problems, the only global unknowns for the system are the traces of some of the interior unknowns. 
The hybridization can significantly reduce the size of the global system. To present the HDG scheme, we first write 
the original problem \eqref{mhd_eqs} into a system of first order equations by introducing two more auxiliary 
unknowns $\mathrm{L}, \boldsymbol{w}$:

\begin{subequations}
\label{mhd_v2}
\begin{align}
\label{mhd_v2_eq1}
\mathrm{L} - \nabla \boldsymbol{u} & = 0, \quad \text{in} \Omega, \\
\label{mhd_v2_eq2}
-\nu \nabla\cdot\mathrm{L} + (\Bu\cdot \nabla) \Bu + \nabla p - \kappa (\nabla\times \Bb) \times \Bb & = \Bf \quad \text{ in } \Omega, \\
\label{mhd_v2_eq3}
\Bw - \nabla \times \boldsymbol{b} & = 0, \quad \text{in} \Omega, \\
\label{mhd_v2_eq4} 
\kappa \nu_{m} \nabla\times \Bw + \nabla r - \kappa \nabla \times (\Bu \times \Bb) & = \Bg \quad \text{ in } \Omega, \\
\label{mhd_v2_eq5}
\nabla \cdot \Bu & = 0 \quad \text{ in } \Omega, \\
\label{mhd_v2_eq6}
\nabla\cdot \Bb & = 0 \quad \text{ in } \Omega, \\
\label{mhd_v2_eq7}
\Bu & = \boldsymbol{0} \quad \text{ on } \partial\Omega, \\
\label{mhd_v2_eq8}
 \int_{\Omega} p \, d\Bx & = 0, \\
 \intertext{for the sake of implicity, we only consider the first type of boundary conditions:}
\label{mhd_v2_eq9}
\Bn \times \Bb  = \boldsymbol{0} \text{ and } r & = 0 \quad \text{ on } \partial\Omega. \\ 
 \intertext{In fact, the second type of boundary (constraint) conditions:}
\label{mhd_v2_eq10}
\Bb \cdot \Bn  = 0 \text{ and } \Bn \times (\nabla\times \Bb)  & = \boldsymbol{0} \quad \text{ on } \partial\Omega, \\  
\label{mhd_v2_eq11}
\int_{\Omega} r d\Bx & = 0, \\
 \intertext{can be treated and analyzed in a similar manner.}
\nonumber
\end{align}
\end{subequations}
{ Besides using $(\Bu_{h}, p_{h}, \Bb_{h}, r_{h}) \in \BV_h \times Q_h \times \BC_h \times S_h$ to approximate 
$(\Bu, p, \Bb, r)$ in the interior of each element,}  
we also need to introduce new unknowns $({\blue{\mathrm{L}_h}}, \Bw_h, \lambda_h, \uhat, \bhat, \rhat)$ for the HDG formulation. 
{\red{For a vector variable $\boldsymbol{d}$, on any face $F \in \mathcal{F}_h$ we define 
$\boldsymbol{d}^t|_F = \boldsymbol{d} - (\Bd \cdot \Bn_F) \Bn_F$ to be its tangential component on $F$.}}
Namely, $({\blue{\mathrm{L}_h}}, \Bw_h)$ is the approximation of $({\blue{\mathrm{L}_h}}, \Bw) = (\nabla \Bu, \nabla \times \Bb)$, 
$(\uhat, \bhat, \rhat)$ are numerical traces approximating $(\Bu, \Bb^t, r)$ on {\red{$\mathcal{F}_h$}} 
and $\lambda_h$ is another Lagrange multiplier which approximates 0 on {\red{$\partial \Ct_h$}}. 
We use the same spaces {\blue{$\BV_h \times Q_h \times \BC_h \times S_h$}} \red{ for $(\Bu_{h}, p_{h}, \Bb_{h}, r_{h})$ } as in Section 2. 
In addition, we need following spaces for  
the additional unknowns:
\begin{align*}
\mathrm{G}_h &:= P_{k}(\Ct_{h};\mathbb{R}^{3\times 3}), \quad  \BW_h := P_{k}(\Ct_{h};\mathbb{R}^{3}), 
\quad \Lambda_h := P_k(\partial \Ct_h), \\
\BM_h &:= \{\Bmu \in P_{k}(\Cf_h; \mathbb{R}^3): \, \Bmu |_{\partial\Omega} = \boldsymbol{0}\}, \\
{\red{\BM_h^{T} }} & ~ {\red{:= \{\Bmu^{t} \in \BM_h: \, \Bn \cdot \Bmu^{t}|_{F} = 0 \text{ for all $F \in \Cf_h$}, \Bmu^t|_{\partial \Omega} = 0 \}}}, \\
N_h &:= \{\shat \in P_{k+1}(\Cf_h): \, \shat|_{\partial \Omega} = 0\}.
\end{align*}

{\red{ Here we define the space $P_k(\partial \Ct_h) := \{w \in L^2(\partial \Ct_h)| w|_F \in P_k(F), \text{for all } 
F \in \partial K, K \in \Ct_h\}$.}} This space, together with the Lagrange multiplier $\lambda_h$ was introduced 
in \cite{CockburnSayas2014, FuJinQiu2017} for HDG methods with exact divergence-free velocity fields. It is worth 
to mention that on each interior face $F$ shared by two elements, $\lambda_h$ is ``double-valued'' and eventually 
it approximates $0$. {\red{Finally, we adopt the standard integral notation for HDG methods \cite{CockburnShi12}, 
for scalar-valued functions $\phi$ and $\psi$, we write
\[
 (\phi,\psi)_{\mathcal{T}_h}:=\sum_{K\in\mathcal{T}_h}(\phi,\psi)_K, \, \, \langle\phi,\psi\rangle_{\partial\mathcal{T}_h}
 :=\sum_{K\in\mathcal{T}_h}\langle\phi,\psi \rangle_{\partial K}.
\]
The above notation also applies for vector-valued functions.}}

Following the standard procedure of devising HDG methods \cite{CockburnShi12}, we first first multiplying the test functions $(\RmG,\Bv, \Bz, \Bc, q, s) \in \mathrm{G}_h \times \BV_h \times \BW_h \times \BC_h \times Q_h \times S_h$ in \eqref{mhd_v2_eq1} - \eqref{mhd_v2_eq6} and applying integrating by parts to have:
\begin{align*}
(\RmL, \RmG)_{\Ct_h} + (\Bu, \nabla \cdot \RmG)_{\Ct_h} - \langle \Bu, \RmG \Bn \rangle_{\partial \Ct_h} & = 0, \\
(\nu \RmL - p \RmI - \Bu \otimes \Bu, \nabla \Bv)_{\Ct_h} - \langle \nu \RmL \Bn - p \Bn - (\Bu \otimes \Bu) \Bn \, , \, \Bv \rangle_{\partial \Ct_h}  \\
+ \bint{\kappa \nabla \times \Bb}{ \Bv \times \Bb}  & = \bint{\Bf}{\Bv}, \\
\bint{\Bw}{\Bz} - \bint{\Bb}{\nabla \times \Bz} - \bintEh{\Bb^{t}}{\Bz \times \Bn} & = 0, \\
\bint{\kappa \nu_m \Bw}{\nabla \times \Bc} + \bint{\nabla r}{\Bc}  
 - \bint{\kappa \Bu \times \Bb}{\nabla \times \Bc} &\\
 - \bintEh{\kappa \nu_m \Bw \times \Bn + \kappa \Bn \times (\Bu \times \Bb)}{\Bc^t} & = \bint{\Bg}{\Bc}, \\
\bint{\nabla\cdot\Bu}{q} & = 0, \\
- \bint{\Bb}{\nabla s} + \bintEh{\Bb \cdot \Bn }{s} & = 0. 
\end{align*}
Next we replace the exact solutions with the numerical approximations. The HDG method then seeks the approximation 
$(\mathrm{L}_h, \Bu_h, p_h, \Bb_h, \Bw_h, r_h, \lambda_h, \uhat, \bhat, \rhat)$ in the finite dimensional space
$\mathrm{G}_h \times \BV_h \times Q_h \times \BC_h \times \BW_h \times S_h \times \Lambda_h \times \BM_h \times \BM^T_h \times N_h$ {\blue{satisfying}}:
\begin{align*}
(\RmL_h, \RmG)_{\Ct_h} + (\Bu_h, \nabla \cdot \RmG)_{\Ct_h} - \langle \uhat, \RmG \Bn \rangle_{\partial \Ct_h} & = 0, \\
(\nu \RmL_h - p_h \RmI - \Bu_h \otimes \Bu_h, \nabla \Bv)_{\Ct_h} - \langle \widehat{\nu \RmL_h \Bn - p_h \Bn - (\uhat \otimes \Bu_h) \Bn} \, , \, \Bv \rangle_{\partial \Ct_h}  \\
+ \bint{\kappa \nabla\times \Bb_h}{ \Bv \times \Bb_h} 
+ \bintEh{\kappa (\Bb_{h}^{t} - \widehat{\Bb}_h^t)}{\Bn \times (\Bv \times \Bb_h)} & = \bint{\Bf}{\Bv}, \\
\bint{\Bw_h}{\Bz} - \bint{\Bb_h}{\nabla \times \Bz} - \bintEh{\widehat{\Bb}_h^t}{\Bz \times \Bn} & = 0, \\
\bint{\kappa \nu_m \Bw_h}{\nabla \times \Bc} + \bint{\nabla r_h}{\Bc} - \bintEh{r_{h}-\widehat{r}_h}{\Bc \cdot \Bn} 
 - \bint{\kappa \Bu_h \times \Bb_h}{\nabla \times \Bc} &\\ 
 - \bintEh{\kappa \nu_m \widehat{\Bw}_h \times \Bn + \kappa \Bn \times (\Bu_h \times \Bb_h)}{\Bc^t} & = \bint{\Bg}{\Bc}, \\
\bint{\nabla\cdot\Bu_h}{q} - \bintEh{(\Bu_{h} - \widehat{\Bu}_h) \cdot \Bn}{q} & = 0, \\
- \bint{\Bb_h}{\nabla s} + \bintEh{\widehat{\Bb}_{h}^{n} \cdot \Bn }{s}  & = 0. 
\end{align*}
for all $(\RmG,\Bv, \Bz, \Bc, q, s) \in \mathrm{G}_h \times \BV_h \times \BW_h \times \BC_h \times Q_h \times S_h$. The above system is closed by enforcing the continuity conditions for the fluxes which results to the so-called {\em transimission conditions:}
\begin{align*}
\langle  \widehat{\nu \RmL_h \Bn - p_h \Bn - (\uhat \otimes \Bu_h) \Bn} \, , \, \vhat \rangle_{\partial \Ct_h} & = 0, \\
\bintEh{\kappa \nu_m \widehat{\Bw}_{h} \times \Bn + \kappa \Bn \times (\Bu_h \times \Bb_h)}{\chat} &= 0,\\
\bintEh{\widehat{\Bb}_{h}^{n} \cdot \Bn}{\shat} & = 0, 
\end{align*}
for all $(\vhat, \chat, \shat) \in \BM_h \times \BM^T_h \times N_h$. To complete the HDG scheme, we need to define the above three numerical fluxes on $\partial \Ct_h$. To this end, for $\widehat{\Bw}_h, \widehat{\Bb}_h \cdot \Bn$ which are related with magnetic fields, we adopt the choices used in \cite{ChenQiuShi2018}:
\begin{align*}
\widehat{\Bw}_h&= \Bw_h - \frac{1}{h} (\Bn \times (\Bb_h^t - \bhat)),\\
\widehat{\Bb}_{h}^{n} \cdot \Bn& = \Bb_h \cdot \Bn + \frac{1}{h}(r_h - \rhat).
\end{align*}
With the above fluxes, the HDG scheme is complete. 
Motivated by the work in \cite{Cesmelioglu2016, CockburnSayas2014, FuJinQiu2017}, we define the convection flux to be 
\[
\widehat{\nu \RmL_h \Bn - p_h \Bn - (\uhat \otimes \Bu_h) \Bn} = \nu \RmL_h \Bn - p_h \Bn - (\uhat \otimes \Bu_h) \Bn
 - \max (\Bu_{h}\cdot \Bn, 0) (\Bu_h - \uhat) + \lambda_h \Bn.
\]
We recall that $\lambda_h \in \Lambda_h$ is a Lagrange multiplier which approximates 0 on {\red{$\partial \Ct_h$}}.
Finally, the constraint equation due to the Lagrange multiplier is given by:
\[
\bintEh{(\Bu_h - \uhat) \cdot \Bn}{\eta} = 0 \quad \text{for all} \quad \eta \in \Lambda_h.
\]
This completes the HDG scheme for the problem. Hence, we can formulate our HDG method as to seek $(\mathrm{L}_h, \Bu_h, p_h, \Bb_h, \Bw_h, r_h, \lambda_h, \uhat, \bhat, \rhat)$ in the finite dimensional space
$\mathrm{G}_h \times \BV_h \times Q_h \times \BC_h \times \BW_h \times S_h \times \Lambda_h \times \BM_h \times \BM^T_h \times N_h$ {\blue{satisfying}}
\begin{subequations}\label{HDG_eqs}
\begin{align}
\label{HDG_eq1}
(\RmL_h, \RmG)_{\Ct_h} + (\Bu_h, \nabla \cdot \RmG)_{\Ct_h} - \langle \uhat, \RmG \Bn \rangle_{\partial \Ct_h} & = 0, \\
\label{HDG_eq2}
(\nu \RmL_h - p_h \RmI - \Bu_h \otimes \Bbeta, \nabla \Bv)_{\Ct_h} - \langle \nu \RmL_h \Bn - p_h \Bn - (\uhat \otimes \Bbeta) \Bn - S_u(\Bu_h - \uhat) &+ \lambda_h \Bn \, , \, \Bv \rangle_{\partial \Ct_h}  \\
\nonumber
+ \bint{\kappa \nabla \times \Bb_h}{\Bv \times \Bd} + \bintEh{\kappa (\Bb^t_h - \bhat)}{\Bn \times (\Bv \times \Bd)} & = \bint{\Bf}{\Bv}, \\
\label{HDG_eq3}
\bint{\Bw_h}{\Bz} - \bint{\nabla \times \Bb_h}{\Bz} + \bintEh{\Bb^t_h - \bhat}{\Bz} & = 0, \\
\label{HDG_eq4}
\bint{\kappa \nu_m \Bw_h}{\nabla \times \Bc} - \bintEh{\kappa \nu_m (\Bw_h \times \Bn - \frac{1}{h} (\Bb^t_h - \bhat))}{\Bc^t}
 + \bint{\nabla r_h}{\Bc} &\\
\nonumber
- \bintEh{r_h - \rhat}{\Bc \cdot \Bn} - \bint{\kappa \Bu_h \times \Bd}{\nabla \times \Bc}
 - \bintEh{\kappa \Bn\times (\Bu_h \times \Bd)}{\Bc^t} & = \bint{\Bg}{\Bc}, \\
\label{HDG_eq5}
-\bint{\nabla \cdot \Bu_h}{q} + \bintEh{(\Bu_h - \uhat) \cdot \Bn}{q} & = 0, \\
\label{HDG_eq6}
- \bint{\Bb_h}{\nabla s} + \bintEh{\Bb_h \cdot \Bn + \frac{1}{h}(r_h - \rhat)}{s} & = 0, \\
\label{HDG_eq7}
\bintEh{(\Bu_h - \uhat) \cdot \Bn}{\eta} & = 0, \\
\intertext{together with the {\em transmission conditions}:}
\label{HDG_eq8}
\langle \nu \RmL_h \Bn - p_h \Bn - (\uhat \otimes \Bbeta) \Bn - S_u(\Bu_h - \uhat)  + \lambda_h \Bn\, , \, \vhat \rangle_{\partial \Ct_h} & = 0, \\
\label{HDG_eq9}
\bintEh{\kappa \nu_m (\Bw_h \times \Bn - \frac{1}{h}(\Bb_h^t - \bhat)) + \kappa \Bn \times (\Bu_h \times \Bd) }{\chat} &= 0,\\
\label{HDG_eq10}
\bintEh{\Bb_h \cdot \Bn + \frac{1}{h}(r_h - \rhat)}{\shat} & = 0, 
\end{align}
\end{subequations}
for all $(\RmG,\Bv,q, \Bc, \Bz, s, \eta, \vhat, \chat, \shat) \in \mathrm{G}_h \times \BV_h \times Q_h \times \BC_h \times \BW_h \times S_h \times \Lambda_h \times \BM_h \times \BM^T_h \times N_h$. On each face $F \in \partial K$ for each $K \in \Ct_h$, the stablization parameter $S_u$ is defined as: (see \cite{Cesmelioglu2016, QiuShi2016})
\[
S_u = \max \{ \Bbeta \cdot \Bn, 0\}.
\]
Finally, we set the vector fields $\Bbeta, \Bd$ as:
\begin{equation}
\label{HDG_eq11}
\Bbeta = \Bu_h, \quad \Bd = \Bb_h.
\end{equation}

\begin{remark}\label{div-free}
From \eqref{HDG_eq5} we can see that $\Bu_h \in H(\text{div}; \Omega)$. Further, since $\nabla \cdot \Bu_h \in Q_h$ we can take $q = \nabla \cdot \Bu_h$ in \eqref{HDG_eq7} to conclude that $\Bu_h$ is exactly divergence-free. 
\end{remark}

\begin{remark}
In practice, to solve the numerical solution of the above nonlinear system we need to apply Picard iteration. Namely, each iteration we need to solve a linearized system by setting $(\Bbeta, \Bd) = (\Bu_h^{n-1}, \Bb^{n-1}_h)$ where $(\Bu_h^{n-1}, \Bb^{n-1}_h)$ is the corresponding solution from the previous iteration. Thanks to the exactly divergence-free feature, we don't need to construct a divergence-free convection field $\Bbeta$ as for DG scheme in \eqref{post_process_op}. 
\end{remark}

For the purpose of presenting the error estimates, we first rewrite the scheme into a more compact form. To this end, we group \eqref{HDG_eq1}, \eqref{HDG_eq2}, \eqref{HDG_eq7} and \eqref{HDG_eq8} together; group \eqref{HDG_eq3}, \eqref{HDG_eq4} and \eqref{HDG_eq9} together; group \eqref{HDG_eq6} and \eqref{HDG_eq10} together, we can rewrite the system as: seeking $(\mathrm{L}_h, \Bu_h, p_h, \Bb_h, \Bw_h, r_h, \lambda_h, \uhat, \bhat, \rhat) \in \mathrm{G}_h \times \BV_h \times Q_h \times \BC_h \times \BW_h \times S_h \times \Lambda_h \times \BM_h \times \BM^T_h \times N_h$ satisfies:
\begin{subequations}\label{HDG_red}
\begin{align}
\label{HDG_red1}
\bint{\RmL_h}{\RmG} + B_h(\RmG; (\Bu_h, \uhat)) - B_h(\nu \RmL; (\Bv, \vhat)) + D_h(p_h; (\Bv, \vhat)) &\\
\nonumber
- I_h(\lambda_h; (\Bv, \vhat)) + I_h(\eta; (\Bu_h, \uhat))+ O_h(\Bu_h; (\Bu_h, \uhat), (\Bv, \vhat)) + C_h(\Bb_h; \Bv, (\Bb_h, \bhat)) & = \bint{\Bf}{\Bv}, \\
\label{HDG_red2}
\bint{\Bw_h}{\Bz} - M_h(\Bz; (\Bb_h, \bhat)) + M_h(\kappa \nu_m \Bw_h; (\Bc, \chat))  - C_h(\Bb_h; \Bu_h, (\Bc, \chat))& \\
\nonumber
- N_h(\Bc; (r_h, \rhat)) + \frac{1}{h} \bintEh{\kappa\nu_{m}(\Bb_h^t - \bhat)}{\Bc^t - \chat}& = \bint{\Bg}{\Bc}, \\
\label{HDG_red3}
-D_h(q;(\Bu_h, \uhat))&= 0,\\
\label{HDG_red4}
N_h(\Bb_h; (s, \shat)) + \frac{1}{h}\bintEh{r_h - \rhat}{s - \shat} &= 0,
\end{align}
\end{subequations}
for all $(\RmG,\Bv,q, \Bc, \Bz, s, \eta, \vhat, \chat, \shat) \in \mathrm{G}_h \times \BV_h \times Q_h \times \BC_h \times \BW_h \times S_h \times \Lambda_h \times \BM_h \times \BM^T_h \times N_h$. Here the operators are defined as:
\begin{align*}
B_h(\RmG; (\Bv, \vhat)) &:= -\bint{\nabla \Bv}{\RmG} + \bintEh{\Bv - \vhat}{\RmG \Bn}; \\
D_h(q; (\Bv, \vhat)) &:= -\bint{\nabla \cdot \Bv}{q} + \bintEh{(\Bv - \vhat) \cdot \Bn}{q}; \\
I_h(\eta; (\Bv, \vhat)) &:= \bintEh{\eta}{(\Bv - \vhat) \cdot \Bn};\\
M_h(\Bz; (\Bc, \chat)) &:= \bint{\nabla \times \Bc}{\Bz} - \bintEh{\Bc^t - \chat}{\Bz {{ \times \Bn }}}; \\
N_h(\Bc; (s, \shat)) & := - \bint{\nabla s}{\Bc} + \bintEh{s - \shat}{\Bc \cdot \Bn}; \\
O_h(\Bbeta; (\Bu, \widehat{\Bu}), (\Bv, \vhat)) &:= {{ - \bint{\Bu \otimes \Bbeta}{\nabla \Bv} + \bintEh{(\widehat{\Bu} \otimes \Bbeta) \Bn + S_u(\Bu - \widehat{\Bu})}{\Bv - \vhat};}}\\
C_h(\Bd; \Bv, (\Bc, \chat))&:= \bint{{{\kappa}}\nabla \times \Bc}{\Bv \times \Bd} + \bintEh{{{\kappa}}(\Bc^t - \chat)}{{\Bn \times} (\Bv \times \Bd)}.
\end{align*}

\subsection{Well-posedness and hybridization}
The well-posedness of the scheme \eqref{HDG_eqs} can be validated by two steps: first we show that the linearized scheme ($\Bbeta, \Bd$ given data) is well-posed. Then we can apply a similar Brower fixed point argument as in Section 7 for the DG scheme. Here we only present the first step in detail. The second step is very similar as in Section 7 and we leave it for readers. We define the norms used in the analysis:
\begin{subequations}\label{HDG_norms}
\begin{align}
\|\eta\|^2_{L^2(\partial \Ct_h)} &:= \sum_{K \in \Ct_h} \sum_{F \in \partial K} \|\eta\|^2_{L^2(F)}, \\
\|(\Bu, \widehat{\Bu})\|^2_{1,h} &:= \|\nabla \Bu\|^2_{L^2(\Ct_h)} + h^{-1} \|(\Bu - \widehat{\Bu})\|^2_{L^2(\partial \Ct_h)}, \\
\|(\Bc, \widehat{\Bc}^t)\|^2_{C} &:= \|\nabla \times \Bc\|^2_{L^2(\Ct_h)} + h^{-1}  \|(\Bc^t - \widehat{\Bc}^t)\|^2_{L^2(\partial \Ct_h)},\\
\|(s, \shat)\|^2_{1, h} &:= \|\nabla s\|^2_{L^2(\Ct_h)} + h^{-1} \|s - \shat\|^2_{L^2(\partial \Ct_h)}.
\end{align}
\end{subequations}

\begin{theorem}\label{well-post_HDG}
If $(\Bbeta, \Bd)$ in the system \eqref{HDG_eqs} are given data and they satisfy $\Bbeta \in H(\text{div}^0; \Omega)$, $\Bd \in L^2(\Omega; \mathbb{R}^3)$ and $\Bd|_{\partial \Ct_h} \in L^2(\partial \Ct_h; \mathbb{R}^3)$, then the linear system \eqref{HDG_eqs} has a unique solution $(\mathrm{L}_h, \Bu_h, p_h, \Bb_h, \Bw_h, r_h, \lambda_h, \uhat, \bhat, \rhat) \in \mathrm{G}_h \times \BV_h \times Q_h \times \BC_h \times \BW_h \times S_h \times \Lambda_h \times \BM_h \times \BM^T_h \times N_h$. In addition, we have $\Bu_h \in H(\text{div}^0; \Omega)$.
\end{theorem}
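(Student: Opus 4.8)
The plan is to exploit that, once $(\Bbeta,\Bd)$ are frozen as data, \eqref{HDG_eqs} is a \emph{square} linear system on a finite-dimensional space: the ten unknowns $(\RmL_h,\Bu_h,p_h,\Bb_h,\Bw_h,r_h,\lambda_h,\uhat,\bhat,\rhat)$ are tested against ten spaces, one per equation. Hence existence and uniqueness are equivalent, and it suffices to show that the homogeneous problem ($\Bf=\Bg=\boldsymbol 0$) admits only the trivial solution. First I would run the standard HDG energy argument: test with the trial functions themselves, i.e. $\RmG=\RmL_h$ in \eqref{HDG_eq1}, $\Bv=\Bu_h$ in \eqref{HDG_eq2}, $q=p_h$ in \eqref{HDG_eq5}, $\Bc=\Bb_h$ in \eqref{HDG_eq4}, $\Bz=\Bw_h$ in \eqref{HDG_eq3}, $s=r_h$ in \eqref{HDG_eq6}, $\eta=\lambda_h$ in \eqref{HDG_eq7}, and $\vhat=\uhat$, $\chat=\bhat$, $\shat=\rhat$ in the transmission conditions \eqref{HDG_eq8}--\eqref{HDG_eq10}, then add all ten identities with matching signs.

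Several structural cancellations must then be checked. The pressure/multiplier pair $(p_h,\lambda_h)$ drops out upon combining \eqref{HDG_eq2}, \eqref{HDG_eq5}, \eqref{HDG_eq7}, \eqref{HDG_eq8}, using that $\Bu_h\in H(\mathrm{div}^0;\Omega)$ by Remark \ref{div-free}. The magnetic coupling terms cancel across the two field equations: the volume parts $\bint{\kappa\nabla\times\Bb_h}{\Bu_h\times\Bd}$ in \eqref{HDG_eq2} and $-\bint{\kappa\Bu_h\times\Bd}{\nabla\times\Bb_h}$ in \eqref{HDG_eq4} are identical, while the face parts combine with \eqref{HDG_eq9} (tested against $\bhat$) to annihilate the remaining $\bhat$ contribution. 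Simultaneously, since $\Bbeta\in H(\mathrm{div}^0;\Omega)$ and $S_u=\max\{\Bbeta\cdot\Bn,0\}$, the convective terms in \eqref{HDG_eq2} reduce to a nonnegative boundary expression of upwind type, $\tfrac12\langle|\Bbeta\cdot\Bn|(\Bu_h-\uhat),\Bu_h-\uhat\rangle_{\partial\Ct_h}\ge 0$, exactly as in Lemma \ref{lemma_proj}. What survives is a coercive energy identity
\[
\nu\|\RmL_h\|_{L^2(\Ct_h)}^2+\kappa\nu_m\|\Bw_h\|_{L^2(\Ct_h)}^2+\frac{\kappa\nu_m}{h}\|\Bb_h^t-\bhat\|_{L^2(\partial\Ct_h)}^2+\frac{1}{h}\|r_h-\rhat\|_{L^2(\partial\Ct_h)}^2+\Theta=0,
\]
where $\Theta\ge 0$ is the convective contribution.

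From this identity I would conclude $\RmL_h=\boldsymbol 0$, $\Bw_h=\boldsymbol 0$, $\Bb_h^t=\bhat$ and $r_h=\rhat$ on $\partial\Ct_h$. Feeding $\RmL_h=\boldsymbol 0$ into \eqref{HDG_eq1} forces $\Bu_h$ to be elementwise constant with traces matching $\uhat$; since $\uhat=\boldsymbol 0$ on $\partial\Omega$ by the definition of $\BM_h$, a propagation argument over the connected mesh gives $\Bu_h=\boldsymbol 0$ and $\uhat=\boldsymbol 0$. Likewise $\Bw_h=\boldsymbol 0$ with \eqref{HDG_eq3} and $\Bb_h^t=\bhat$ shows $\Bb_h$ is curl-free and tangentially conforming subject to $\Bn\times\Bb=\boldsymbol 0$, forcing $\Bb_h=\boldsymbol 0$ and $\bhat=\boldsymbol 0$. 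With all field variables zero, \eqref{HDG_eq2} and \eqref{HDG_eq8} reduce to a relation in $p_h,\lambda_h$ alone, yielding $p_h=0$ (modulo the mean-zero constraint) and $\lambda_h=0$ by the discrete inf--sup/surjectivity of the velocity--pressure coupling; then \eqref{HDG_eq4}, \eqref{HDG_eq6} give $r_h=0$, $\rhat=0$. The assertion $\Bu_h\in H(\mathrm{div}^0;\Omega)$ is Remark \ref{div-free}.

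The main obstacle will be the bookkeeping of the \emph{face} terms: verifying that the $\Bd$-dependent and $S_u$-dependent boundary contributions cancel exactly against \eqref{HDG_eq8}--\eqref{HDG_eq9}, rather than leaving an indefinite remainder. This is where the precise definitions $\Bb_h^t=\Bn\times\Bb_h$, the double-valued multiplier $\lambda_h\in\Lambda_h$, and the placement of the penalties $\tfrac1h(\Bb_h^t-\bhat)$ and $\tfrac1h(r_h-\rhat)$ are decisive. A secondary subtlety is the recovery of $p_h,\lambda_h,r_h,\rhat$ after the fields vanish, which rests on inf--sup properties of the HDG pressure/multiplier blocks; the velocity--pressure part is standard (cf. the BDM-based argument in Step 6 of Section \ref{estimate_DG}), but the coupling involving $\Lambda_h$ and the trace $\uhat$ should be checked separately.
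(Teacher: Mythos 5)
Your overall strategy coincides with the paper's: treat \eqref{HDG_eqs} as a square linear system, test with the solution itself to obtain the coercive energy identity (with the weights $\nu$ and $\kappa\nu_m$ you should actually take $\RmG=\nu\RmL_h$ and $\Bz=\kappa\nu_m\Bw_h$, but your stated identity has the correct scaling, so this is cosmetic), and then recover the remaining unknowns one by one. However, two of your recovery steps contain genuine gaps.

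First, the deduction of $\Bu_h=\boldsymbol 0$ from $\RmL_h=\boldsymbol 0$ does not follow from \eqref{HDG_eq1} alone. After integration by parts, \eqref{HDG_eq1} with $\RmL_h=\boldsymbol 0$ gives $-(\nabla\Bu_h,\RmG)_{\Ct_h}+\langle \Bu_h-\uhat,\RmG\,\Bn\rangle_{\partial\Ct_h}=0$ for all $\RmG\in\mathrm{G}_h$; since the interior values and normal traces of $\RmG$ cannot be chosen independently, this does not force $\Bu_h$ to be elementwise constant with traces matching $\uhat$. The paper instead invokes the discrete stability estimate $\|(\Bu_h,\uhat)\|_{1,h}\le C\|\RmL_h\|_{L^2(\Ct_h)}$ (Theorem 2.1 of \cite{FuJinQiu2017}), whose hypotheses — $\Bu_h\in H(\text{div}^0;\Omega)$ and $(\Bu_h-\uhat)\cdot\Bn=0$ on $\partial\Ct_h$ — are supplied by \eqref{HDG_eq5} and \eqref{HDG_eq7} via Remark \ref{div-free}. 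Some such discrete Poincar\'e-type inequality is indispensable here; your ``propagation over the connected mesh'' sketch rests on a false premise.

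Second, your claim that $\Bb_h$ being curl-free, tangentially conforming, and subject to $\Bn\times\Bb_h=\boldsymbol 0$ forces $\Bb_h=\boldsymbol 0$ is incorrect: any gradient $\nabla\phi_h$ with $\phi_h\in H_0^1(\Omega)\cap P_{k+1}(\Ct_h)$ satisfies all three conditions. You have not used \eqref{HDG_eq6} and \eqref{HDG_eq10}, which (after $r_h=\rhat$ on $\Cf_h$ from the energy identity) yield the weak divergence-free condition $(\Bb_h,\nabla s)_\Omega=0$ for the relevant test functions, i.e.\ $\Bb_h\in H_0(\text{div}^0;\Omega)$ in the discrete sense. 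Only then does Lemma \ref{lemma_conforming_stability} (the discrete embedding $\|\tilde\Bb_h\|_{L^3}\le C\|\nabla\times\tilde\Bb_h\|_{L^2}$, which exploits the simple connectivity of $\Omega$) give $\Bb_h=\boldsymbol 0$ and hence $\bhat=\boldsymbol 0$. The remaining steps — the order-of-elimination for $r_h$ versus $(p_h,\lambda_h)$, and the BDM-based recovery of $p_h$ and $\lambda_h$ — match the paper's argument in substance.
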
 
\begin{proof}
With $(\Bbeta, \Bd)$ given, the system \eqref{HDG_eqs} is a square linear system. Therefore, it is suffice to show that we only have zero solution if the right hand side $(\Bf, \Bg) = ({\bf{0, 0}})$. Let $(\mathrm{L}_h, \Bu_h, p_h, \Bb_h, \Bw_h, r_h, \lambda_h, \uhat, \bhat, \rhat)$ be a solution, next we show that all components vanish under this assumption.

By Remark \ref{div-free} we know that $\Bu_h \in H(\text{div}^0, \Omega)$ and $(\Bu_h - \uhat) \cdot \Bn = 0$ on $\Cf_h$. Therefore, by Theorem 2.1 in \cite{FuJinQiu2017} we have
\begin{equation}\label{H1ineq}
\|(\Bu_h, \uhat)\|_{1,h} \le C \|\RmL_h\|_{L^2(\Ct_h)}.
\end{equation}

Taking $(\RmG,\Bv,q, \Bc, \Bz, s, \eta, \vhat, \chat, \shat) = (\nu \mathrm{L}_h, \Bu_h, p_h, \Bb_h, \kappa \nu_m \Bw_h, r_h, \lambda_h, \uhat, \bhat, \rhat)$ in \eqref{HDG_eqs} and adding all the equations, after some algebraic manipulation of the terms, we obtain the energy identity as:
\begin{align*}
\nu \|\RmL_h\|^2_{L^2(\Ct_h)} &+ \bintEh{(S_u - \frac12 \Bbeta \cdot \Bn) (\Bu_h - \uhat)}{\Bu_h - \uhat} + \kappa \nu_m \|\Bw_h\|^2_{L^2(\Ct_h)} \\
& + \frac{\kappa \nu_m}{h} \bintEh{\Bb^t_h - \bhat}{\Bb^t_h - \bhat} + \frac{1}{h} \bintEh{r_h - \rhat}{r_h - \rhat} = 0.
\end{align*}
This implies that
\begin{equation}\label{firststep}
\RmL_h = 0, \quad \Bw_h = {\bf 0}, \quad \Bb^t|_{\Cf_h} = \bhat, \quad r_h|_{\Cf_h} = \rhat.
\end{equation}
Together with \eqref{H1ineq} we have
\[
\Bu_h = {\bf 0}, \quad \uhat = {\bf 0}.
\]
Next, by \eqref{firststep} and \eqref{HDG_eq3} we have $\Bb_h \in H_0(\text{curl}^0; \Omega)$. The \eqref{HDG_eq6}, \eqref{HDG_eq10} reduces to
\[
\bint{\nabla \cdot \Bb_h}{s} = 0, \quad \bintEh{\Bh_h \cdot \Bn}{\shat} = 0,
\]
for all $(s, \shat) \in S_h \times N_h$. This implies that $\Bb_h \in H_0(\text{div}^0, \Omega)$. Therefore, by Lemma \ref{lemma_conforming_stability} we have $\Bb_h = 0$, and then $\bhat = 0$ by \eqref{firststep}.

Now the equation \eqref{HDG_eq4} reduces to:
\[
\bint{\nabla r_h}{\Bc} = 0, \quad \text{for all} \quad \Bc \in \BC_h,
\]
Simply taking $\Bc = \nabla r_h$ implies that $\nabla r_h = 0$. This shows that $r_h$ is piecewise constant over $\Ct_h$. By the fact that $r_h = \rhat$ on $\Cf_h$ and $\rhat$ vanishes on $\partial \Omega$ we conclude that 
\[
r_h = 0, \quad \rhat = 0.
\]
Finally, we need to show that $p_h, \lambda_h$ also vanish. To this end, we use the equations \eqref{HDG_eq2}, \eqref{HDG_eq8} which reduce to:
\begin{subequations}
\begin{align}
\label{reduce_1}
- \bint{p_h}{\nabla \cdot \Bv} + \bintEh{p_h \Bn - \lambda_h \Bn}{\Bv} &= 0,\\
\label{reduce_2}
\bintEh{p_h \Bn - \lambda_h \Bn}{\vhat} &= 0.
\end{align}
\end{subequations}
for all $(\Bv, \vhat) \in \BV_h \times \BM_h$. Since $p_h \in L^2_0(\Omega)$, there exists a $\boldsymbol{\gamma} \in H(\text{div}; \Omega)$ such that
\[
\nabla \cdot \boldsymbol{\gamma} = p_h \quad \text{in $\Omega$}, \quad \boldsymbol{\gamma} \cdot \Bn = 0 \quad \text{on $\partial \Omega$}.
\]
Taking $\Bv = \boldsymbol{\Pi}^{\text{BDM}} \boldsymbol{\gamma}$ in \eqref{reduce_1} we have
\[
\bint{p_h}{\nabla \cdot \boldsymbol{\Pi}^{\text{BDM}} \boldsymbol{\gamma}} + \bintEh{p_h \Bn - \lambda_h \Bn}{\boldsymbol{\Pi}^{\text{BDM}} \boldsymbol{\gamma}} = 0,
\]
by the property of the BDM-projection and \eqref{reduce_2} we have $p_h = 0$. Now \eqref{reduce_1} becomes:
\[
\bintEh{\lambda_h}{\Bv \cdot \Bn} = 0 \quad \text{for all} \quad \Bv \in \BV_h.
\]
On each element $K$ we can always find a $\Bv \in P_k(K; \mathbb{R}^3)$ such that $\Bv \cdot \Bn|_{\partial K} = \lambda_h$ since this is part of the degree of freedoms of BDM element of degree k. With this $\Bv$ in the above equation we can conclude that $\lambda_h = 0$. This completes the proof. 
\end{proof}

Like all HDG methods, the above scheme can be hybridized so that the only globally coupled unknowns are $(\uhat, \bhat, \rhat, \bar{p}_h)$ where $\bar{p}_h$ approximates the {\em average} pressure within each element. To be more specific, let us decompose the space $Q_h = Q_h^{\perp} \oplus \bar{Q}_h$, where
\[
Q^{\perp}_h :=\{q \in Q_h: q|_K \in L_{0}^{2}(K), \; \forall\; K \in \Ct_h\}, \quad \bar{Q}_h = \{q \in Q_h: q|_K \in P_0(K), \; \forall \; K \in \Ct_h\}. 
\]
Therefore, we can write $p_h = p^{\perp}_h + \bar{p}_h$. In practice, we use \eqref{HDG_eq1} - \eqref{HDG_eq7} as local solvers so that $\RmL_h, \Bu_h,p^{\perp}_h, \Bb_h, \Bw_h, \lambda_h$ are elinimated from the glocal system. The global unknowns are $(\uhat, \bhat, \rhat, \bar{p}_h)$ coupled by the transmission conditions \eqref{HDG_eq8} - \eqref{HDG_eq10}. In other word, the size of the global system is the dimension of the space $\BM_h \times \BM^{T}_h \times N_h \times \bar{Q}_h$. The hybridized system reduces the global degrees of freedom significantly, which in turn to make the method more efficient and compatetive with mixed DG and conforming mixed methods. For more details on hybridization of the method we refer to \cite{CockburnShi14, FuJinQiu2017} and references therein.

\subsection{Error equations}
In this section, we will derive the error equations based on \eqref{HDG_red}. We begin by some notations that will be used in the analysis. For a generic unknown $\mathcal{U}$ with its numerical counterpart $\mathcal{U}_h$ we can split the error as:
\begin{align*}
& \mathcal{U} - \mathcal{U}_h = \delta_{\mathcal{U}} + e^{\mathcal{U}}, \\
\text{where} \quad & 
\delta_{\mathcal{U}} := \mathcal{U} - {\Pi} \mathcal{U}, \quad e^{\mathcal{U}}:= {\Pi}  \mathcal{U} - \mathcal{U}_h.
\end{align*}
Here $\Pi \mathcal{U}$ is some projection/interpolant of $\mathcal{U}$ into the discrete polynomial space which we specify next. For $(\Bu, p, \Bc, r)$ we use the same projections $(\boldsymbol{\Pi}^{RT}, \Pi_Q, \boldsymbol{\Pi}_C, \Pi_S)$ as for the mixed DG method in Section \ref{estimate_DG}. For the unknowns $(\RmL, \Bw)$ we use the standard $L^2-$projections $(\mathrm{\Pi}_{{G}} \RmL, \boldsymbol{\Pi}_{W} \Bw)$. For the boundary unknowns $(\Bu|_{\Cf_h}, \Bb^t|_{\Cf_h}, r|_{\Cf_h})$ we simply use the traces of the projections so that on $\Cf_h$ we define:
\begin{align*}
&e^{\widehat{\Bu}}:= \boldsymbol{\Pi}^{RT} \Bu - \uhat, \quad e^{\widehat{\Bb}, t} := (\boldsymbol{\Pi}_{C} \Bb)^t - \bhat,\quad e^{\widehat{r}}:= \Pi_{S} r - \rhat, \\
&\delta_{\widehat{\Bu}}:= \Bu - \boldsymbol{\Pi}^{RT} \Bu, \quad \delta^t_{\widehat{\Bb}} := \Bb^t - (\boldsymbol{\Pi}_{C} \Bb)^t,\quad \delta_{\widehat{r}}:= r - \Pi_{S} r.
\end{align*}
Immediately, on $\Cf_h$ we have
\[
\delta_{\Bu} \cdot \Bn = \delta_{\widehat{\Bu}} \cdot \Bn, \quad \delta^t_{\Bb} = \delta^t_{\widehat{\Bb}}, \quad \delta_{r} = \delta_{\widehat{r}}.
\]
Now we are ready to derive the error equations for the method. Notice that the exact solution $(\Bu, \RmL, p, \Bb, \Bw, r, 0, \Bu|_{\Cf_h}, \Bb^t|_{\Cf_h}, r|_{\Cf_h})$ also satisfies the discrete system \eqref{HDG_red}, if we subtract these two systems and use the splitting discussed above, we can obtain the error equations as:
\begin{subequations}\label{HDG_err}
\begin{align}
\label{HDG_err1}
&\bint{e^{\RmL}}{\RmG} + B_h(\RmG; (e^{\Bu}, e^{\widehat{\Bu}})) - B_h(\nu e^{\RmL}; (\Bv, \vhat)) + D_h(e^p; (\Bv, \vhat)) \\
\nonumber
&- I_h(\lambda_h; (\Bv, \vhat)) + I_h(\eta; (e^{\Bu}, e^{\widehat{\Bu}}))+ (O_h(\Bu; (\Bu, \Bu), (\Bv, \vhat)) - O_h(\Bu_h; (\Bu_h, \uhat), (\Bv, \vhat))) \\
\nonumber
&+ (C_h(\Bb; \Bv, (\Bb, \Bb^t)) - C_h(\Bb_h; \Bv, (\Bb_h, \bhat))) \\ 
\nonumber
&= - \bint{\delta_{L}}{\RmG} -  B_h(\RmG; (\delta_{\Bu}, \delta_{\widehat{\Bu}})) + B_h(\nu \delta_{\RmL}; (\Bv, \vhat)) - D_h(\delta_p; (\Bv, \vhat)), \\
\label{HDG_err2}
&\bint{e^\Bw}{\Bz} - M_h(\Bz; (e^\Bb, e^{\widehat{\Bb}, t})) + M_h(\kappa \nu_m e^\Bw; (\Bc, \chat)) - N_h(\Bc; (e^r, e^{\widehat{r}}))\\
\nonumber
&  + \frac{1}{h}\bintEh{\kappa\nu_{m}((e^\Bb)^t - e^{\widehat{\Bb},t})}{\Bc^t - \chat}
-(C_h(\Bb; \Bu, (\Bc, \chat)) - C_h(\Bb_h; \Bu_h, (\Bc, \chat)))  \\
\nonumber
&= -\bint{\delta_\Bw}{\Bz} + M_h(\Bz; (\delta_\Bb, \delta^t_{\widehat{\Bb}})) - M_h(\kappa \nu_m \delta_\Bw; (\Bc, \chat)) + N_h(\Bc; (\delta_r, \delta_{\widehat{r}})) \\
\label{HDG_err3}
&-D_h(q;(e^\Bu, e^{\widehat{\Bu}})) = D_h(q;(\delta_{\Bu}, \delta_{\widehat{\Bu}})),\\
\label{HDG_err4}
&N_h(e^\Bb; (s, \shat)) + \frac{1}{h}\bintEh{e^r - e^{\widehat{r}}}{s - \shat} = - N_c(\delta_{\Bb}; (s, \shat)),
\end{align}
\end{subequations}
for all $(\RmG,\Bv,q, \Bc, \Bz, s, \eta, \vhat, \chat, \shat) \in \mathrm{G}_h \times \BV_h \times Q_h \times \BC_h \times \BW_h \times S_h \times \Lambda_h \times \BM_h \times \BM^T_h \times N_h$. 

\subsubsection{Energy identity}
Similar as the analysis for the mixed DG method in Section 8, we have the following energy identity and several auxiliary estimates for the final error estimates:

\begin{lemma}\label{HDG_energy}
The projection of the errors satisfy:
\begin{itemize}
\item[(1)] $e^{\Bu} \in H(\text{div}^0; \Omega) \cap \BV_h$ and $(e^{\Bu} - e^{\widehat{\Bu}}) \cdot \Bn = 0$ on $\partial \Ct_h$.
\item[(2)] we have
\[
\|(e^{\Bu}, e^{\widehat{\Bu}})\|_{1,h} \le C \|e^{\RmL}\|_{L^2(\Omega)}.
\]
\item[(3)] $\bint{e^{\Bb}}{\nabla s} = 0$ for all $s \in H_{0}^{1}(\Omega) \cap S_h$, and it holds:
\[
\|e^{\Bb}\|_{L^2(\Omega)} \le C \|e^{\Bb}\|_{L^3(\Omega)} \le C \|(e^{\Bb}, e^{\widehat{\Bb}, t})\|_C.
\] 
\item[(4)] we have
\[
\|e^{\Bw}\|_{L^2(\Omega)} \le C (\|\nabla \times \delta_{\Bb}\|_{\Ct_h} + \|(e^{\Bb}, e^{\widehat{\Bb}, t})\|_C).
\] 
\item[(5)] In addition, we have the energy identity:
\begin{subequations}\label{energy_idHDG}
\begin{align}
\nonumber
& \nu \|e^{\RmL}\|^2_{L^2(\Omega)} + \kappa \nu_m \|e^{\Bw}\|^2_{L^2(\Omega)}
 + \frac{1}{h}\kappa\nu_{m}\|(e^{\Bb})^t - e^{\widehat{\Bb}, t}\|^2_{L^2(\partial \Ct_h)} 
 + \frac{1}{h}\|e^r - e^{\widehat{r}}\|^2_{L^2(\partial \Ct_h)} \\ 
\nonumber
&+ O_h(\Bu_h; (e^{\Bu}, e^{\widehat{\Bu}}), (e^{\Bu}, e^{\widehat{\Bu}})) \\
\nonumber
=& \big( -B_h(\nu e^{\RmL}; (\delta_{\Bu}, \delta_{\widehat{\Bu}})) + B_h(\nu \delta_{\RmL}; (e^{\Bu}, e^{\widehat{\Bu}}))\big) \\
\nonumber
& + \big( M_h(\kappa \nu_m e^{\Bw}; (\delta_{\Bb}, \delta^t_{\widehat{\Bb}})) - M_h(\kappa \nu_m \delta_{\Bw}; (e^{\Bb}, e^{\widehat{\Bb}, t})) \big) \\
\nonumber
&+ \big( N_h(e^{\Bb}; (\delta_{r}, \delta_{\widehat{r}})) - N_h(\delta_{\Bb}; (e^{r}, e^{\widehat{r}}))\big) \\
\nonumber
& - \big( O_h(\delta_{\Bu}; (\Bu, \Bu); (e^{\Bu}, e^{\widehat{\Bu}}) + O_h(e_{\Bu}; (\Bu, \Bu); (e^{\Bu}, e^{\widehat{\Bu}}) + O_h({\Bu_h}; (\delta_\Bu, \delta_{\widehat{\Bu}}); (e^{\Bu}, e^{\widehat{\Bu}}) ) \big) \\
\nonumber
& + \big( C_h(\Bb; \Bu, (e^{\Bb}, e^{\widehat{\Bb}, t})) - C_h(\Bb_h; \Bu_h, (e^{\Bb}, e^{\widehat{\Bb}, t})) + C_h(\Bb_h; e^\Bu, (\Bb_h, \bhat)) - C_h(\Bb; e^\Bu, (\Bb, \Bb^t)) \big) \\
\nonumber
:=& T_1 + T_2 + \dots + T_5.
\end{align}
\end{subequations}
\end{itemize}
\end{lemma}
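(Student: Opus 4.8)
The plan is to handle the five assertions in the order stated, since (1)--(4) are the structural and a priori facts that both supply the hypotheses of the discrete Sobolev embedding (Theorem~\ref{theorem_stability}) and make the cancellations in the energy identity~(5) possible. The whole argument mirrors the well-posedness proof of Theorem~\ref{well-post_HDG} and the mixed-DG energy computation of Section~\ref{estimate_DG}, the new ingredient being the systematic use of the single-valued trace (\emph{hat}) unknowns together with the exact $H(\mathrm{div})$-conformity of $\Bu_h$.

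For (1): since $\nabla\cdot\Bu=0$, the commuting property of the RT projection gives $\nabla\cdot\boldsymbol{\Pi}^{RT}\Bu=0$, so $\boldsymbol{\Pi}^{RT}\Bu$ is a divergence-free $RT_k$ field and hence lies in $\BV_h$ (as already observed in Section~\ref{estimate_DG}); combined with Remark~\ref{div-free}, which makes $\Bu_h$ exactly divergence-free, this yields $e^{\Bu}=\boldsymbol{\Pi}^{RT}\Bu-\Bu_h\in H(\mathrm{div}^0;\Omega)\cap\BV_h$. On $\partial\Ct_h$ one has $e^{\Bu}-e^{\widehat{\Bu}}=-(\Bu_h-\uhat)$, whose normal component vanishes by \eqref{HDG_eq7} (test with $\eta=(\Bu_h-\uhat)\cdot\Bn\in\Lambda_h$). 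For (2): taking only $\RmG$ as test function in \eqref{HDG_err1}, the $L^2$-orthogonality of $\delta_{\RmL}$ and the volume-moment condition of $\boldsymbol{\Pi}^{RT}$ (with $\delta_{\Bu}=\delta_{\widehat{\Bu}}$ on $\Cf_h$) identify $e^{\RmL}$ with the HDG discrete gradient of $(e^{\Bu},e^{\widehat{\Bu}})$ up to a projection-consistency term; since part~(1) verifies the hypotheses $e^{\Bu}\in H(\mathrm{div}^0)$ and $(e^{\Bu}-e^{\widehat{\Bu}})\cdot\Bn=0$, the discrete Poincar\'e inequality of Theorem~2.1 in \cite{FuJinQiu2017}, applied exactly as in \eqref{H1ineq}, gives $\|(e^{\Bu},e^{\widehat{\Bu}})\|_{1,h}\le C\|e^{\RmL}\|_{L^2(\Omega)}$.

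For (3): I would first establish $(e^{\Bb},\nabla s)_\Omega=0$ for $s\in H^1(\Omega)\cap S_h$. Writing $e^{\Bb}=\boldsymbol{\Pi}_C\Bb-\Bb_h$, the term $(\boldsymbol{\Pi}_C\Bb,\nabla s)$ vanishes by \eqref{proj_C_prop1}, while $(\Bb_h,\nabla s)=0$ follows from the discrete constraint \eqref{HDG_eq6} (with \eqref{HDG_eq10}) tested against the continuous $s$ and its own trace $\shat=s|_{\Cf_h}$, which annihilates both the jump and the $h^{-1}$-stabilization contributions. With this, Theorem~\ref{theorem_stability} gives $\|e^{\Bb}\|_{L^3(\Omega)}\le C(\|h^{-1/2}\lb e^{\Bb}\rb_{T}\|_{L^2(\Cf_h)}+\|\nabla\times e^{\Bb}\|_{L^2(\Ct_h)})$; inserting the single-valued $e^{\widehat{\Bb},t}$ into the face jump via the triangle inequality bounds $\|h^{-1/2}\lb e^{\Bb}\rb_{T}\|_{L^2(\Cf_h)}$ by $Ch^{-1/2}\|(e^{\Bb})^t-e^{\widehat{\Bb},t}\|_{L^2(\partial\Ct_h)}$, so the right-hand side is $\le C\|(e^{\Bb},e^{\widehat{\Bb},t})\|_C$, and $\|e^{\Bb}\|_{L^2}\le C\|e^{\Bb}\|_{L^3}$ holds on the bounded domain. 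For (4): testing the $\Bw$-block of \eqref{HDG_err2} with $\Bz=e^{\Bw}$, the $L^2$-orthogonality of $\boldsymbol{\Pi}_W$ removes $\bint{\delta_{\Bw}}{e^{\Bw}}$; bounding the two $M_h(e^{\Bw};\cdot)$ terms by a discrete trace inequality and using $\delta^t_{\Bb}=\delta^t_{\widehat{\Bb}}$ on $\Cf_h$ to kill the stabilization trace in $M_h(e^{\Bw};(\delta_{\Bb},\delta^t_{\widehat{\Bb}}))$ yields $\|e^{\Bw}\|_{L^2(\Omega)}\le C(\|(e^{\Bb},e^{\widehat{\Bb},t})\|_C+\|\nabla\times\delta_{\Bb}\|_{\Ct_h})$.

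Finally, for the energy identity (5) I would test \eqref{HDG_err} with the tuple $(\RmG,\Bv,q,\Bc,\Bz,s,\eta,\vhat,\chat,\shat)=(\nu e^{\RmL},e^{\Bu},e^p,e^{\Bb},\kappa\nu_m e^{\Bw},e^r,\lambda_h,e^{\widehat{\Bu}},e^{\widehat{\Bb},t},e^{\widehat{r}})$ and add the four equations, paralleling the choice in Theorem~\ref{well-post_HDG}. The adjoint pairs $B_h/B_h$, $D_h/D_h$, $M_h/M_h$ and $N_h/N_h$ cancel, the two $I_h$ contributions vanish because $(e^{\Bu}-e^{\widehat{\Bu}})\cdot\Bn=0$ by part~(1), and what survives on the left is exactly $\nu\|e^{\RmL}\|^2_{L^2(\Omega)}+\kappa\nu_m\|e^{\Bw}\|^2_{L^2(\Omega)}+\frac1h\|(e^{\Bb})^t-e^{\widehat{\Bb},t}\|^2_{L^2(\partial\Ct_h)}+\frac1h\|e^r-e^{\widehat{r}}\|^2_{L^2(\partial\Ct_h)}+O_h(\Bu_h;(e^{\Bu},e^{\widehat{\Bu}}),(e^{\Bu},e^{\widehat{\Bu}}))$. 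The nonlinear convective and coupling differences are then split as in Step~2 of Section~\ref{estimate_DG}: $O_h(\Bu;(\Bu,\Bu),\cdot)-O_h(\Bu_h;(\Bu_h,\uhat),\cdot)$ produces the diagonal term $O_h(\Bu_h;(e^{\Bu},e^{\widehat{\Bu}}),(e^{\Bu},e^{\widehat{\Bu}}))$ (kept on the left) plus cross terms collected in $T_4$, while $C_h(\Bb;\cdots)-C_h(\Bb_h;\cdots)$ is reorganized into $T_5$, and the projection-error data on the right-hand sides of \eqref{HDG_err} regroup into $T_1,T_2,T_3$. I expect the main obstacle to be precisely this last splitting: one must carry the extra hat variables through every bilinear pairing, check that each adjoint cancellation still holds in the HDG setting, and arrange the nonlinear terms so that the sign-definite piece $O_h(\Bu_h;\cdot,\cdot)\ge0$ is isolated while all cross terms land in $T_4,T_5$ in a form amenable to the bounds \eqref{bound_T7}--\eqref{bound_T9}; a secondary subtlety is the consistency term in part~(2), which must be absorbed without spoiling the clean bound by $\|e^{\RmL}\|_{L^2(\Omega)}$.
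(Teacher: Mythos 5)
Your proposal is correct and follows essentially the same route as the paper: (1) from the exact divergence-freeness of $\Bu_h$ plus the commuting and conforming properties of $\boldsymbol{\Pi}^{RT}$, (2) from Theorem 2.1 of \cite{FuJinQiu2017}, (3) from restricting the discrete constraint to conforming test functions and invoking the $L^3$ embedding, (4) from testing with $\Bz=e^{\Bw}$, and (5) from the same test tuple and algebraic rearrangement. The one step you leave implicit, and which the paper spells out as the punchline of its proof, is that the two pressure data terms $D_h(\delta_p;(e^{\Bu},e^{\widehat{\Bu}}))$ and $D_h(e^{p};(\delta_{\Bu},\delta_{\widehat{\Bu}}))$ surviving on the right-hand side actually vanish --- the first by part (1), the second because $\delta_{\Bu}=\delta_{\widehat{\Bu}}$ on $\Cf_h$ and $\nabla\cdot\boldsymbol{\Pi}^{RT}\Bu=\Pi_Q(\nabla\cdot\Bu)$ with $e^{p}\in Q_h$ --- which is precisely what makes the right-hand side regroup into the pressure-free terms $T_1,\dots,T_5$.
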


\begin{proof}
(1) is direct consequence of the fact that $\Bu_h \in H(\text{div}^0; \Omega)$ and $(\Bu_h - \uhat) \cdot \Bn = 0$ on $\partial \Ct_h$ and the conforming property of the Raviart-Thomas projection. (2) is a direct consequence of Theorem 2.1 in \cite{FuJinQiu2017} and (1) in this lemma.

For (3), in \eqref{HDG_red4} if we restrict $s \in H_{0}^1(\Omega) \cap S_h$ and $\shat = s$ on $\Cf_h$, we have
\[
\bint{\Bb_h}{\nabla s} = 0, \quad \forall s \in H_{0}^{1}(\Omega) \cap S_h,
\]
then we also have 
\[
\bint{e^\Bb}{\nabla s} = 0, \quad \forall s \in H_{0}^{1}(\Omega) \cap S_h,
\]
by the property of projection $\boldsymbol{\Pi}_C$ in Lemma \ref{proj_C_props}. Further, this means that $e^{\Bb}$ satisfies the condition in Theorem \ref{theorem_stability}, therefore, we have
\begin{align*}
\|e^{\Bb}\|_{L^2(\Omega)} \le C \|e^{\Bb}\|_{L^3(\Omega)} &\le C (\|\nabla \times e^{\Bb}\|_{L^2(\Ct_h)} + \|  h^{-1/2}\llbracket (e^\Bb)^t \rrbracket \|_{L^2(\Cf_h)}),\\
& \le C \|(e^{\Bb}, e^{\widehat{\Bb}, t})\|_{C}.
\end{align*}
The last inequality is due to the fact that $e^{\widehat{\Bb}, t}$ is single valued on $\partial \Ct_h$ and a triangle inequality. 

To prove (4), we take $(\Bz, \Bc, \chat) = (e^{\Bw}, {\bf 0}, {\bf 0}^t)$ in error equation \eqref{HDG_err2} we have
\[
\|e^{\Bw}\|^2_{L^2(\Omega)} = M_h(e^{\Bw}; (e^{\Bb}, e^{\widehat{\Bb}, t})) + M_h(e^{\Bw}; (\delta_{\Bb}, \delta_{\widehat{\Bb}, t})).
\]
The estimate follows from Cauchy-Schwartz inequality and inverse inequality. 

Finally, to establish (5), we take $$(\RmG,\Bv,q, \Bc, \Bz, s, \eta, \vhat, \chat, \shat) = (\nu e^{\RmL}, e^{\Bu}, e^p, e^{\Bb}, \kappa \nu_m e^{\Bw}, e^r, \lambda_h, e^{\widehat{\Bu}}, e^{\widehat{\Bb},t}, e^{\widehat{r}})$$ in the error equations \eqref{HDG_err} and adding all the error equations, after some algebraic rearrangement of the terms we have
\begin{align*}
& \nu \|e^{\RmL}\|^2_{L^2(\Omega)} + \kappa \nu_m \|e^{\Bw}\|^2_{L^2(\Omega)}
 + \frac{1}{h}\kappa\nu_{m}\|(e^{\Bb})^t - e^{\widehat{\Bb}, t}\|^2_{L^2(\partial \Ct_h)} 
 + \frac{1}{h}\|e^r - e^{\widehat{r}}\|^2_{L^2(\partial \Ct_h)} \\ 
&+ O_h(\Bu_h; (e^{\Bu}, e^{\widehat{\Bu}}), (e^{\Bu}, e^{\widehat{\Bu}})) \\
=& - D_h(\delta_p; (e^{\Bu}, e^{\widehat{\Bu}})) + D_h(e_p; (\delta_{\Bu}, \delta_{\widehat{\Bu}})) + T_1 + T_2 + \dots + T_5.
\end{align*}
We only need to show that the first two terms vanish in the last equality. By (1), we have $D_h(\delta_p; (e^{\Bu}, e^{\widehat{\Bu}})) = 0$ and 
\[
D_h(e_p; (\delta_{\Bu}, \delta_{\widehat{\Bu}})) = \bint{\nabla \cdot \delta_{\Bu}}{e^p} = 0,
\]
by the commuting property: $\nabla \cdot \boldsymbol{\Pi}^{RT} \Bu = \Pi_{Q} (\nabla \cdot \Bu)$ and $e^p \in Q_h$. This completes the proof.
\end{proof}

\subsubsection{Error estimates}
Comparing energy identity for the HDG method in \ref{energy_idHDG} and the one for the mixed DG method in \eqref{energy_identity}, we can see the the right hand side of \ref{energy_idHDG} is independent of pressure $p$. This is due to the fact that the HDG method provides exactly divergence-free velocity. Thanks to this feature, the error estimate for the energy norm is independent of the regularity of the pressure. Consequently, for the error estimates, we can further relax the regularity assumption \eqref{regularity} to be:
\begin{subequations}\label{regularity_HDG}
\begin{align}
&(\Bu, p) \; \in \; H^{\sigma + 1}(\Omega; \mathbb{R}^{3}) \times H^{\sigma_p}(\Omega), \\
& (\Bb, \nabla \times \Bb, r) \; \in \; H^{\sigma_m}(\Omega; \mathbb{R}^{3}) 
\times H^{\sigma_m}(\Omega; \mathbb{R}^{3}) \times H^{\sigma_m + 1}(\Omega),
\end{align}
\end{subequations}
for $\sigma, \sigma_m > \frac12, \sigma_p > 0$.

Now we are ready to state our main convergence results for the HDG method:
\begin{theorem}\label{errors_ub_hdg}
Let $(\Bu, \RmL, p,  \Bb, \Bw, r)$ be the exact solution of the system \eqref{mhd_v2}, and $(\mathrm{L}_h, \Bu_h, p_h,$    $\Bb_h, \Bw_h, r_h, \lambda_h, \uhat, \bhat, \rhat)$ be the solution of the HDG method (\ref{HDG_eqs}). 
With the same assumption as in Theorem \ref{theorem_wellposed}, in addition with the regularity assumption \eqref{regularity_HDG} 
and that $\dfrac{1}{\min(\nu, \nu_{m})}\Vert \Bu\Vert_{H^{1}(\Omega)}$ and 
$\dfrac{1}{\sqrt{\nu \kappa \nu_{m}}}\Vert \nabla\times \Bb \Vert_{L^{2}(\Omega)}$  
are small enough, then we have
\begin{align*}
\nu^{\frac12}\|(\Bu - \Bu_h, \Bu - \uhat)\|_{1, h} &+ \kappa^{\frac12} \nu_m^{\frac12}\|(\Bb - \Bb_h, \Bb^t - \bhat)\|_C + \|(r -r_h, r - \rhat)\|_{1, h} \\
& \le \mathcal{C} h^{\text{min}\{k, \sigma, \sigma_m\}}\Big(\|\Bu\|_{H^{\sigma+1}(\Omega)}  
+ \|\Bb\|_{H^{\sigma_m}(\Omega)} + \|\nabla \times \Bb\|_{H^{\sigma_m}(\Omega)} \\
&+ \|r\|_{H^{\sigma_m + 1}(\Omega)} + (\|\Bu\|_{H^{\sigma+1}(\Omega)} 
 + \|\nabla \times \Bb\|_{H^{\sigma_m}(\Omega)}) \|\Bb\|_{H^{\sigma_m}(\Omega)} \Big), \\
 \|p - p_h\|_{L^2(\Omega)} & \le C h^{\min{\sigma_p, k}} \|p\|_{H^{\sigma_p}(\Omega)}  \\
 & + \mathcal{C} h^{\text{min}\{k, \sigma, \sigma_m\}}\Big(\|\Bu\|_{H^{\sigma+1}(\Omega)}  
+ \|\Bb\|_{H^{\sigma_m}(\Omega)} + \|\nabla \times \Bb\|_{H^{\sigma_m}(\Omega)} \\
&+ \|r\|_{H^{\sigma_m + 1}(\Omega)} + (\|\Bu\|_{H^{\sigma+1}(\Omega)} 
 + \|\nabla \times \Bb\|_{H^{\sigma_m}(\Omega)}) \|\Bb\|_{H^{\sigma_m}(\Omega)} \Big)
\end{align*}
here $\mathcal{C}$ depends on the physical parameters $\kappa, \nu, \nu_m$ and the external forces $\Bf, \Bg$ 
but is independent of mesh size $h$.
\end{theorem}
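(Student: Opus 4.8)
The plan is to follow the roadmap of the mixed DG error analysis in Section~\ref{estimate_DG}, taking the energy identity \eqref{energy_idHDG} of Lemma~\ref{HDG_energy} as the point of departure. The overarching structure is: (i) bound the right-hand side $T_1+\cdots+T_5$; (ii) extract a lower bound for the left-hand side in terms of the target energy norm; (iii) absorb the ``bad'' terms via the smallness hypotheses; and finally (iv) treat $r-r_h$ and $p-p_h$ separately. The three linear terms $T_1$ (the $B_h$-contributions pairing $e^{\RmL}$ with $\delta_{\Bu},\delta_{\RmL}$), $T_2$ (the $M_h$-contributions) and $T_3$ (the $N_h$-contributions) would each be controlled by Cauchy--Schwarz and discrete trace inequalities, combined with the approximation properties of $\boldsymbol{\Pi}^{RT},\boldsymbol{\Pi}_C,\Pi_S$ and of the $L^2$-projections $\mathrm{\Pi}_{G}$ and $\boldsymbol{\Pi}_{W}$, producing a factor $h^{\min\{k,\sigma,\sigma_m\}}$ multiplied by the relevant seminorms of $e^{\RmL}$, $e^{\Bw}$, $(e^{\Bb},e^{\widehat{\Bb},t})$ and the jump $h^{-1/2}\|e^{r}-e^{\widehat{r}}\|$.

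The delicate terms are the nonlinear ones, $T_4$ (convection) and $T_5$ (the Lorentz-type coupling), exactly as $T_6$ and $T_7$--$T_9$ were in the proof of Theorem~\ref{errors_ub}. For $T_4$ I would note that the diagonal convection term $O_h(\Bu_h;(e^{\Bu},e^{\widehat{\Bu}}),(e^{\Bu},e^{\widehat{\Bu}}))$ already sitting on the left of \eqref{energy_idHDG} is nonnegative---this uses $S_u=\max\{\Bbeta\cdot\Bn,0\}$ and $\Bu_h\in H(\text{div}^0;\Omega)$, precisely as in the energy computation of Theorem~\ref{well-post_HDG}---so it may be discarded, while the three off-diagonal pieces are bounded by the $H^1$-type continuity of $O_h$ together with $\|(e^{\Bu},e^{\widehat{\Bu}})\|_{1,h}\le C\|e^{\RmL}\|_{L^2(\Omega)}$ from Lemma~\ref{HDG_energy}(2). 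For $T_5$ the crucial device is the discrete Sobolev embedding: I would split each coupling into a volume part and a face part and estimate them, analogously to the bounds \eqref{bound_T7}--\eqref{bound_T9}, by the generalized H\"older inequality with exponents $(3,6,2)$, a discrete trace inequality, and the $L^3$-control $\|e^{\Bb}\|_{L^3(\Omega)}\le C\|(e^{\Bb},e^{\widehat{\Bb},t})\|_C$ of Lemma~\ref{HDG_energy}(3). The a~priori bounds of Theorem~\ref{theorem_wellposed} supply the control of $\|\Bu_h\|$, $\|\Bb_h\|_C$ and $\|\nabla\times\Bb\|$ that appear as coefficients.

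Next I would convert the left-hand side of \eqref{energy_idHDG} into the energy norm. The term $\nu\|e^{\RmL}\|_{L^2(\Omega)}^2$ controls $\nu\|(e^{\Bu},e^{\widehat{\Bu}})\|_{1,h}^2$ through Lemma~\ref{HDG_energy}(2); the face jumps $h^{-1}\|(e^{\Bb})^t-e^{\widehat{\Bb},t}\|^2$ together with $\kappa\nu_m\|e^{\Bw}\|^2$ yield $\|(e^{\Bb},e^{\widehat{\Bb},t})\|_C^2$ once $\|\nabla\times e^{\Bb}\|$ is recovered from $\|e^{\Bw}\|$ via the discrete curl reconstruction \eqref{HDG_eq3} (up to the projection error $\|\nabla\times\delta_{\Bb}\|$, cf.\ Lemma~\ref{HDG_energy}(4)); and the $L^2$-part of the magnetic norm is absorbed again by Lemma~\ref{HDG_energy}(3). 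Moving the contributions carrying the factors $\min(\nu,\nu_m)^{-1}\|\Bu\|_{H^1(\Omega)}$ and $(\nu\kappa\nu_m)^{-1/2}\|\nabla\times\Bb\|_{L^2(\Omega)}$ to the left by the smallness hypotheses (the kick-back argument of Step~4 in Section~\ref{estimate_DG}), a triangle inequality with the optimal projection estimates delivers the stated bounds for $\Bu-\Bu_h$ and $\Bb-\Bb_h$.

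For $r-r_h$ I would mimic Step~5 of Section~\ref{estimate_DG}: introduce a conforming lifting $\widetilde{e}^{\,r}\in H_0^1(\Omega)\cap S_h$ with $\|\nabla(e^{r}-\widetilde{e}^{\,r})\|\le C\|h^{-1/2}\llbracket e^{r}\rrbracket\|$, test the error equation \eqref{HDG_err2} with $\Bc=\nabla\widetilde{e}^{\,r}$, and use that the $M_h$- and $C_h$-couplings drop out because $\widetilde{e}^{\,r}$ is conforming. For the pressure I would run the inf--sup argument of Step~6: pick $\Bw\in H^1_0(\Omega;\mathbb{R}^3)$ with $\nabla\cdot\Bw=e^{p}$ and test \eqref{HDG_err1} with $\Bv=\boldsymbol{\Pi}^{\text{BDM}}\Bw$; because $\Bu_h$ is exactly divergence-free the $B_h$- and $D_h$-terms collapse, so $\|e^{p}\|_{L^2(\Omega)}$ is bounded by the already-controlled velocity and magnetic errors, while the genuine projection error of $p$ contributes the separate $h^{\min\{\sigma_p,k\}}\|p\|_{H^{\sigma_p}(\Omega)}$ term---this is precisely why the pressure regularity can be relaxed to \eqref{regularity_HDG}. \textbf{The main obstacle} I anticipate is the hybrid bookkeeping in $T_5$ and in the lower bound: one must check that the single-valued face unknown $e^{\widehat{\Bb},t}$ combines with the volume jumps so that Theorem~\ref{theorem_stability} genuinely applies to $e^{\Bb}$, and that the discrete relation \eqref{HDG_eq3} recovers $\|\nabla\times e^{\Bb}\|$ from $\|e^{\Bw}\|$ without losing a power of $h$.
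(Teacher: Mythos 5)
Your proposal follows exactly the route the paper intends: the paper itself only states that the proof ``is based on Lemma~\ref{HDG_energy} and mimicking the proofs for the mixed DG method in Section~8,'' and your outline reproduces that plan faithfully, including the nonnegativity of the diagonal $O_h$ term, the $L^3$ control of $e^{\Bb}$ from Lemma~\ref{HDG_energy}(3), the kick-back via the smallness hypotheses, the conforming lifting for $e^r$, and the BDM/inf--sup argument for $e^p$ that decouples the pressure regularity. The ``obstacle'' you flag is already resolved by parts (3) and (4) of Lemma~\ref{HDG_energy} (single-valuedness of $e^{\widehat{\Bb},t}$ plus a triangle inequality, and the curl reconstruction estimate), so the proposal is correct and essentially identical in approach to the paper's.
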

The proof of the result is based on Lemma \ref{HDG_energy} and mimicking the proofs for the mixed DG method in Section 8. We leave the details for readers who are interested. 

\section{Concluding Remarks}

In this paper we rigorously analyzed a DG scheme for MHD problem. With standard regularity assumption on the exact solution, we proved that the numerical solution converges to the exact solution optimally for all unknowns in the energy norm. To the best of our knowledge, it is the first analysis dedicated to DG methods for nonlinear MHD problems. In order to make the method more attractive and competitive, we also derive and analyze the first HDG scheme for the problem with several unique features in addition to those for the mixed DG method, including but not limited to (1) It reduces the size of the global system significantly by the hybridization technique, (2) It provides exactly divergence-free velocity fields, (3) The errors for the velocity and magnetic fields are independent of the regularity of the pressure. The issues related with implementation of the mixed DG and HDG method are subjected to ongoing work.

\appendix

\section{Proof of Lemma~\ref{lemma_conforming_stability2}}
\label{appendix1}
In this section, we give the proof for Lemma~\ref{lemma_conforming_stability2}. 

Since $\nabla\cdot (\nabla\times \tilde{\Bb}_{h}) = 0$, there is a unique 
$\boldsymbol{\sigma} \in H_{0}(\text{curl},\Omega)$ satisfying 
\begin{align*}
\nabla\times (\nabla\times \boldsymbol{\sigma}) & = \nabla\times \tilde{\Bb}_{h}  \quad \text{ in } \Omega,\\
\nabla \cdot \boldsymbol{\sigma} & = 0  \quad \text{ in } \Omega.
\end{align*}
It is well known that 
\begin{align*}
\Vert \boldsymbol{\sigma}\Vert_{L^{2}(\Omega)} + \Vert \nabla\times \boldsymbol{\sigma} \Vert_{L^{2}(\Omega)} 
\leq C \Vert \nabla\times \tilde{\Bb}_{h} \Vert_{L^{2}(\Omega)}. 
\end{align*}
Obviously, 
\begin{align*}
\nabla\cdot (\nabla\times \boldsymbol{\sigma})=0 \quad \text{ in }\Omega,\qquad 
(\nabla\times \boldsymbol{\sigma})\cdot \Bn = 0 \quad \text{ on } \partial\Omega.  
\end{align*}
So, according to \cite[Theorem~$4.1$]{Hiptmair02}, there is $\delta\in (0,\frac{1}{2}]$ such that 
\begin{align}
\label{curl_sigma_ineq1}
\Vert \nabla\times \boldsymbol{\sigma} \Vert_{H^{1/2+\delta}(\Omega)} 
\leq C \Vert \nabla\times \boldsymbol{\sigma}\Vert_{L^{2}(\Omega)} 
\leq C \Vert \nabla\times \tilde{\Bb}_{h} \Vert_{L^{2}(\Omega)}. 
\end{align}

We recall that $\boldsymbol{\Pi}_{N}$ is the N{\'e}d{\'e}lec projection onto $H(\text{curl},\Omega)\cap \BC_{h}$, 
and denote by $\boldsymbol{\Pi}^{BDM}$ the BDM projection onto $H(\text{div}, \Omega) \cap P_{k-1}(\Ct_{h};\mathbb{R}^{3})$. 
Thus 
\begin{align*}
\nabla\times \boldsymbol{\Pi}_{N} (\nabla\times \boldsymbol{\sigma}) 
= \boldsymbol{\Pi}^{BDM} (\nabla\times (\nabla\times \boldsymbol{\sigma}))
= \boldsymbol{\Pi}^{BDM}(\nabla\times \tilde{\Bb}_{h}) = \nabla\times \tilde{\Bb}_{h}. 
\end{align*}
So, there is $g_{h}\in H^{1}(\Omega) \cap P_{k+1}(\Ct_{h})$ such that 
\begin{align*}
\boldsymbol{\Pi}_{N} (\nabla\times \boldsymbol{\sigma}) - \tilde{\Bb}_{h} = \nabla g_{h}.
\end{align*}
Since $(\nabla\times \boldsymbol{\sigma}, \nabla g_{h})_{\Omega} = (\tilde{\Bb}_{h}, \nabla g_{h})_{\Omega} = 0$, we have that 
\begin{align*}
& \Vert \boldsymbol{\Pi}_{N} (\nabla\times \boldsymbol{\sigma}) - \tilde{\Bb}_{h}\Vert_{L^{2}(\Omega)}^{2}
= (\boldsymbol{\Pi}_{N} (\nabla\times \boldsymbol{\sigma}) - \tilde{\Bb}_{h}, \nabla g_{h})_{\Omega} \\
= & (\boldsymbol{\Pi}_{N} (\nabla\times \boldsymbol{\sigma}) - \nabla\times \boldsymbol{\sigma}, \nabla g_{h})_{\Omega} 
= (\boldsymbol{\Pi}_{N} (\nabla\times \boldsymbol{\sigma}) - \nabla\times \boldsymbol{\sigma}, 
\boldsymbol{\Pi}_{N} (\nabla\times \boldsymbol{\sigma}) - \tilde{\Bb}_{h})_{\Omega} \\
\leq & \Vert \boldsymbol{\Pi}_{N} (\nabla\times \boldsymbol{\sigma}) - \nabla\times \boldsymbol{\sigma}\Vert_{L^{2}(\Omega)} 
\Vert \boldsymbol{\Pi}_{N} (\nabla\times \boldsymbol{\sigma}) - \tilde{\Bb}_{h} \Vert_{L^{2}(\Omega)}. 
\end{align*}
By (\ref{curl_sigma_ineq1}), we have that 
\begin{align}
\label{curl_sigma_ineq2}
\Vert \nabla\times \boldsymbol{\sigma} - \tilde{\Bb}_{h}\Vert_{L^{2}(\Omega)} 
\leq C \Vert \boldsymbol{\Pi}_{N} (\nabla\times \boldsymbol{\sigma}) - \nabla\times \boldsymbol{\sigma}\Vert_{L^{2}(\Omega)} 
\leq C h^{1/2+\delta} \Vert \nabla\times \tilde{\Bb}_{h} \Vert_{L^{2}(\Omega)}. 
\end{align}

We denote by $\boldsymbol{\Pi}_{h}$ the $L^{2}$-orthogonal projection onto $P_{k}(\Ct_{h};\mathbb{R}^{3})$. 
Since $\tilde{\Bb}_{h} \in P_{k}(\Ct_{h};\mathbb{R}^{3})$, then $\boldsymbol{\Pi}_{h} \tilde{\Bb}_{h} = \tilde{\Bb}_{h}$. 
So, we have that 
\begin{align*}
\Vert \tilde{\Bb}_{h}\Vert_{L^{3}(\Omega)} = \Vert \boldsymbol{\Pi}_{h} \tilde{\Bb}_{h}\Vert_{L^{3}(\Omega)} 
\leq \Vert \boldsymbol{\Pi}_{h} (\tilde{\Bb}_{h} - \nabla\times \boldsymbol{\sigma})\Vert_{L^{3}(\Omega)} 
+ \Vert \boldsymbol{\Pi}_{h} \nabla\times \boldsymbol{\sigma} \Vert_{L^{3}(\Omega)}. 
\end{align*}
By scaling argument and (\ref{curl_sigma_ineq2}), we have that 
\begin{align*}
& \Vert \boldsymbol{\Pi}_{h} (\tilde{\Bb}_{h} - \nabla\times \boldsymbol{\sigma})\Vert_{L^{3}(\Omega)}  
\leq C h^{-1/2} \Vert \boldsymbol{\Pi}_{h} (\tilde{\Bb}_{h} - \nabla\times \boldsymbol{\sigma})\Vert_{L^{2}(\Omega)} \\
\leq & C h^{-1/2}\Vert \tilde{\Bb}_{h} - \nabla\times \boldsymbol{\sigma} \Vert_{L^{2}(\Omega)} 
\leq C h^{\delta} \Vert \nabla \times \tilde{\Bb}_{h} \Vert_{L^{2}(\Omega)}. 
\end{align*}
Again by scaling argument, It is easy to see there is a constant $C>0$ such that 
\begin{align*}
\Vert \boldsymbol{\Pi}_{h} \Bv\Vert_{L^{3}(\Omega)} \leq C \Vert \Bv\Vert_{L^{3}(\Omega)}, 
\qquad \forall \Bv \in L^{3}(\Omega).
\end{align*}
Thus, by (\ref{curl_sigma_ineq1}), we have that 
\begin{align*}
\Vert \boldsymbol{\Pi}_{h} \nabla\times \boldsymbol{\sigma}\Vert_{L^{3}(\Omega)} 
\leq C \Vert \nabla\times \boldsymbol{\sigma} \Vert_{L^{3}(\Omega)} 
\leq C \Vert \nabla\times \boldsymbol{\sigma} \Vert_{H^{1/2+\delta}(\Omega)} 
\leq C \Vert \nabla \times \tilde{\Bb}_{h} \Vert_{L^{2}(\Omega)}.
\end{align*}
So, we can conclude that 
\begin{align*}
\Vert \tilde{\Bb}_{h}\Vert_{L^{3}(\Omega)} \leq C \Vert \nabla \times \tilde{\Bb}_{h} \Vert_{L^{2}(\Omega)}. 
\end{align*}

\section{Proof of Lemma~\ref{lemma_curl_interpolation}}
\label{appendix2}
In this section, we give the proof for Lemma~\ref{lemma_curl_interpolation}, which 
highly mimics that of \cite[Proposition~$4.5$]{Houston}. 

For any element $K\in \Ct_{h}$, we use $\Ce(K)$ and $\Cf(K)$ to denote the sets of edges and faces of $K$.  
For any face $f\in \Cf_{h}$, we denote by $\Ce(f)$ the set of the edges of $f$. 
The functions $\{ \boldsymbol{\varphi}_{K, e}^{i} \}_{i=1}^{N_{e}}$, $\{ \boldsymbol{\varphi}_{K,f}^{i}\}_{i=1}^{N_{f}}$ 
and $\{ \boldsymbol{\varphi}_{K,b}^{i} \}_{i=1}^{N_{b}}$ are Lagrange basis functions on $P_{k}(K;\mathbb{R}^{3})$,  
which are introduced in step $2$ of the proof of \cite[Proposition~$4.5$]{Houston}. In fact, 
$\{ \boldsymbol{\varphi}_{K, e}^{i} \}_{i=1}^{N_{e}}$ are basis functions associated with edge degrees of freedom 
for any edge edge $e\in \Ce(K)$; $\{ \boldsymbol{\varphi}_{K,f}^{i}\}_{i=1}^{N_{f}}$ are basis functions 
associated with face degrees of freedom for any face $f \in \Cf(K)$; and $\{ \boldsymbol{\varphi}_{K,b}^{i} \}_{i=1}^{N_{b}}$ 
are basis functions associated with volume degrees of freedom of $K$. 

Like $(A.1)$ in step $2$ of the proof of \cite[Proposition~$4.5$]{Houston}, for any $K \in \Ct_{h}$, we have that 
\begin{align*}
\Bb_{h} |_{K} = \Sigma_{e\in \mathbf{E}(K)} \Sigma_{i=1}^{N_{e}} b_{K,e}^{i} \boldsymbol{\varphi}_{K, e}^{i} 
+ \Sigma_{f\in \Ce (K)} \Sigma_{i=1}^{N_{f}} b_{K,f}^{i} \boldsymbol{\varphi}_{K, f}^{i} 
+ \Sigma_{i=1}^{N_{b}} b_{K,b}^{i} \boldsymbol{\varphi}_{K, b}^{i}. 
\end{align*}
Similar to step $5$ of the proof of \cite[Proposition~$4.5$]{Houston}, We construct $\tilde{\Bb}_{h} 
\in H(\text{curl}, \Omega)\cap \BC_{h}$ to be the unique function whose edge degrees freedom are 
\begin{align*}
\tilde{b}_{K, e}^{i} = \dfrac{1}{\vert N(e)\vert} \Sigma_{K^{\prime}\in N(e)} b_{K^{\prime}, e}^{i},
\end{align*}
$i=1,\cdots, N_{e}$ where $N(e)$ is the set of all elements sharing the edge $e$; 
whose face degrees of freedom are 
\begin{align*}
\tilde{b}_{K, f}^{i} = \dfrac{1}{\vert N(f) \vert} \Sigma_{K^{\prime}\in N(f)} b_{K^{\prime}, f} ^{i}, 
\end{align*}
$i=1,\cdots, N_{f}$ where $N(f)$ is the set of all elements sharing the face $f$; 
and whose volume degrees of freedom are 
\begin{align*}
\tilde{b}_{K,b}^{i} = b_{K, b}^{i},\qquad i = 1,\cdots, N_{b}.
\end{align*}

From the bound in $(A.2)$ in step $3$ of the proof of \cite[Proposition~$4.5$]{Houston}, we have that 
\begin{align*}
\Vert \Bb_{h} - \tilde{\Bb}_{h} \Vert_{L^{2}(K)}^{2} 
\leq C h_{K} \big[ \Sigma_{e\in \Ce(K)} \Sigma_{i=1}^{N_{e}} (b_{K,e}^{i} - \tilde{b}_{K,e}^{i})^{2} 
+ \Sigma_{f\in \Cf(K)}\Sigma_{i=1}^{N_{f}} (b_{k,f}^{i} - \tilde{b}_{K,f}^{i})^{2}\big].
\end{align*}
For any edge $e\in \Ce(K)$, we denote by $\Cf(e)$ the set of the faces sharing the edge $e$. 
For $f\in \Cf(e)$, we denote by $K_{f}$ and $K^{\prime}_{f}$ the elements sharing the face $f$. 
It is easy to see that $K_{f} = K^{\prime}_{f}$ if $f$ is a face on $\partial\Omega$. 
By the construction of $\tilde{\Bb}_{h}$,  
Cauchy-Schwarz inequality and shape-regularity assumption, we have that 
\begin{align*}
\Sigma_{i=1}^{N_{e}} (b_{K,e}^{i} - \tilde{b}_{K,e}^{i})^{2} 
\leq C \Sigma_{K^{\prime}\in N(e)} \Sigma_{i=1}^{N_{e}} (b_{K,e}^{i} - b_{K^{\prime},e}^{i})^{2}.
\end{align*}
We give an order of elements in $N(e)$ by $\{K^{j}\}_{j=1}^{\vert N(e)\vert}$, such that 
\begin{align*}
K^{j}\cap K^{j+1} \text{ is a face,}\quad \forall 1\leq j\leq \vert N(e) \vert -1.
\end{align*} 
Then, for any $1\leq i \leq N_{e}$, we have that 
\begin{align*}
\Sigma_{K^{\prime}\in N(e)} (b_{K,e}^{i} - b_{K^{\prime},e}^{i})^{2} 
\leq C \Sigma_{j=1}^{\vert N(e)\vert - 1} (b_{K^{j},e}^{i} - b_{K^{j+1},e}^{i})^{2}.
\end{align*}
So, we have that 
\begin{align*}
\Sigma_{i=1}^{N_{e}} (b_{K,e}^{i} - \tilde{b}_{K,e}^{i})^{2} 
\leq C \Sigma_{f\in \Cf(e)}\Sigma_{i=1}^{N_{e}} (b_{K_{f},e}^{i} - b_{K^{\prime}_{f},e}^{i})^{2}.
\end{align*}
Since $K_{f} = K^{\prime}_{f}$ if the face $f$ is on $\partial\Omega$, then 
\begin{align*}
\Sigma_{i=1}^{N_{e}} (b_{K,e}^{i} - \tilde{b}_{K,e}^{i})^{2} 
\leq C \Sigma_{f\in \Cf(e), f \nsubseteq \partial\Omega}
\Sigma_{i=1}^{N_{e}} (b_{K_{f},e}^{i} - b_{K^{\prime}_{f},e}^{i})^{2}.
\end{align*}
Then, by $(A.3)$ in step $4$ of the proof of \cite[Proposition~$4.5$]{Houston}, we have that 
\begin{align*}
\Sigma_{i=1}^{N_{e}} (b_{K,e}^{i} - \tilde{b}_{K,e}^{i})^{2}  
\leq C \Sigma_{f\in \Cf(e), f \nsubseteq \partial\Omega} \int_{f} 
\vert \llbracket\Bb_h \rrbracket_{T} \vert^{2} ds. 
\end{align*}

It is easy to see that for any face $f\in \Cf(K)$, we have that 
\begin{align*}
\Sigma_{i=1}^{N_{f}} (b_{K,f}^{i} - \tilde{b}_{K,f}^{i})^{2} 
\leq C \Sigma_{K^{\prime}\in N(f)} \Sigma_{i=1}^{N_{f}} (b_{K,f}^{i} - b_{K^{\prime},f}^{i})^{2} 
\leq C \int_{f\backslash \partial\Omega} \vert \llbracket\Bb_h \rrbracket_{T}\vert^{2} ds. 
\end{align*}

Combing the above estimates yields 
\begin{align*}
\Vert \Bb_{h} -\tilde{\Bb}_{h} \Vert_{L^{2}(K)}^{2} 
\leq C h_{K} \big[ \Sigma_{e\in \Ce(K)} \Sigma_{f\in \Cf(e), f\nsubseteq \partial\Omega}
\int_{f} \vert \vert \llbracket\Bb_h \rrbracket_{T} \vert^{2} ds
+\Sigma_{f\in \Cf(K), f\nsubseteq \partial\Omega}\int_{f} 
\vert \vert \llbracket\Bb_h \rrbracket_{T}\vert^{2} ds\big].
\end{align*}
Summing over all elements, taking into account the shape-regularity of the mesh, we finish the proof. 
 

\end{document}